\DeclareFontFamily{T1}{wncyr}{}
\DeclareFontShape{T1}{wncyr}{m}{n}{%
  <5><6><7><8><9>gen*wncyr%
  <10><10.95><12><14.4><17.28><20.74><24.88>wncyr10}{}
\theoremstyle{plain}
\newtheorem{thm}{Theorem}[section]
\newtheorem{lem}[thm]{Lemma}
\newtheorem{prop}[thm]{Proposition}
\newtheorem{coro}[thm]{Corollary}
\newtheorem*{thma}{Theorem}
\newtheorem*{claim}{Claim}
\theoremstyle{definition}
\newtheorem{defn}[thm]{Definition}
\newtheorem{exm}[thm]{Example}
\theoremstyle{remark}
\newtheorem{rem}[thm]{Remark}
\newcommand{\on}{\operatorname}
\newcommand{\mc}{\mathcal}
\newcommand{\mb}{\mathbf}
\newcommand{\mbb}{\mathbb}
\newcommand{\mf}{\mathfrak}
\newcommand{\id}{\ensuremath{\mathop{\rm id\,}\nolimits}}
\newcommand{\Z}{\mathbb{Z}}
\newcommand{\C}{\mathbb{C}}
\newcommand{\Q}{\mathbb{Q}}
\newcommand{\K}{\mathbb{K}}
\newcommand{\A}{\mathbb{A}}
\newcommand{\Sn}[1][n]{\mf S_{#1}}
\newcommand{\ve}{\varepsilon}
\newcommand{\ol}{\overline}
\newcommand{\mx}{\mbox}
\newcommand{\geqs}{\geqslant}
\newcommand{\leqs}{\leqslant}
\newcommand{\w}{\wedge}
\newcommand{\sm}{\setminus}
\newcommand{\sha}{\mathbin{\textup{\usefont{T1}{wncyr}{m}{n}\char120 }}}
\newcommand{\shap}{\textup{\scriptsize \usefont{T1}{wncyr}{m}{n}\char120 }}
\newcommand{\sh}{\operatorname{sh}}
\newcommand{\psh}{p_{\shap}}
\newcommand{\ra}{\rightarrow}
\newcommand{\lra}{\longrightarrow}
\newcommand{\Hom}{\on{Hom}}
\newcommand{\HH}{\on{H}}
\newcommand{\p}{\mathbb{P}}
\newcommand{\Sp}{\on{Spec}}
\newcommand{\Spec}{\Sp}
\newcommand{\dme}[1][]{\mc{DM}_{gm}^{eff}}
\newcommand{\dm}[1][]{\mc{DM}_{gm}}
\newcommand{\MTM}{\on{MTM}}
\newcommand{\SmCorr}[1][]{\on{\SmCorr}_{#1}}
\newcommand{\pst}[1]{{\p^1 #1 \setminus \{0,1,\infty\}} }
\newcommand{\ps}{{\pst{}}}
\newcounter{notemargin}
\newcommand{\Nc}[1][X]{\Nge{#1}{\bullet}}
\newcommand{\cNg}[2]{\Nge{#1}{#2}}
\newcommand{\Nge}[2]{{\mc N}^{qf, \,#2}_{#1}}
\newcommand{\Ngqf}[2]{{\mc N}^{qf, \,#2}_{#1}}
\newcommand{\Lcal}{\mc L}
\newcommand{\Lcz}{\Lcal_{0}^1}
\newcommand{\Lco}{\Lcal_{1}^0}
\newcommand{\Lc}{\Lcal^{0}}
\newcommand{\Lcu}{\Lcal^{1}}
\newcommand{\LcLcu}{\Lcal^{0-1}}
\newcommand{\Lce}{\Lcal^{\ve}}
\newcommand{\Lcb}{\Lcal^{B}}
\newcommand{\Lcub}{\Lcal^{1,B}}
\newcommand{\LcLcub}[1]{\ol{\Lcal^{0-1, B}_{#1}}}
\newcommand{\Zqf}[1][\bullet]{\mc Z_{q.f.}^{#1}}
\newcommand{\Alt}{\on{Alt}}
\newcommand{\ap}{{a'}}
\newcommand{\bp}{{b'}}
\newcommand{\dN}{\partial}
\newcommand{\da}[1][]{\partial_{#1}}
\newcommand{\dc}{d_{cy}}
\newcommand{\cL}{{\mc L^c}}
\newcommand{\Lie}{\on{Lie}}
\newcommand{\Ss}{s^{-1}}
\newcommand{\TC}{T^c}
\newcommand{\TL}[1][]{L_{#1}}
\newcommand{\Tcl}[1][]{\mc T^{coL}_{#1}}
\newcommand{\T}[1]{T_{#1^*}}
\newcommand{\Tc}[2][0]{T_{#2^*}^{#1}}
\newcommand{\Tcu}[1]{\Tc[1]{#1}}
\newcommand{\Tcg}{T}
\newlength{\ledge}
\newlength{\sibdis}
\newlength{\ecan}
\newlength{\ecas}
\newlength{\ecae}
\newlength{\ecao}
\newlength{\eca}
\newlength{\lbullet}
\tikzset{deftree/.style={level distance=\ledge,sibling distance=\sibdis}}
\tikzset{edgesp/.style={level distance=#1*\ledge,sibling distance=#1\sibdis}}
\tikzset{labf/.style={mathsc,yshift=-\eca}}
\tikzset{labfs/.style={mathss,yshift=-\eca}}
\tikzset{labr/.style={mathsc,yshift=\eca}}
\tikzset{labrs/.style={mathss,yshift=\eca}}
\tikzset{math mode/.style = {execute at begin node=$, execute at end node=$}}
\tikzset{mathscript mode/.style =%
 {execute at begin node=$\scriptstyle , execute at end node=$}}
\tikzset{math/.style = {execute at begin node=$, execute at end node=$}}
\tikzset{mathsc/.style =%
 {execute at begin node=$\scriptstyle , execute at end node=$}}
\tikzset{mathss/.style =%
 {execute at begin node=$\scriptscriptstyle , execute at end node=$}}
\tikzset{root/.style={draw,circle,inner sep=1pt,execute at begin node=$\bullet,
    execute at end node=$}}
\tikzset{roottest/.style={draw,circle,inner sep=#1pt}}
\tikzset{roots/.style={draw,circle,inner sep=2pt}}
\tikzset{bull/.style={fill,circle,minimum size=2pt,inner sep=0pt}}
\tikzset{leaf/.style={minimum size=2pt,inner sep=1pt}}
\tikzset{leafb/.style={minimum size=0pt,inner sep=0pt}}
\tikzset{intvertex/.style={mathsc,fill,circle,minimum size=0.6ex, inner sep=0pt}}
\tikzset{Reda/.style={-,double distance=0.3ex, draw=black}}
\tikzset{N/.style={-,thin,draw=black}}
\tikzset{Nd/.style={-,dotted, thin,draw=black}}
\title[A relative basis for $MTM(\ps)$]
{A relative basis for mixed Tate motives over the projective line minus
  three points}
\author{Ismael Soud{\`e}res}
\thanks{}
\address{Max-Planck-Insititut für Mathematik, Bonn\\
Vivatsgasse 7, \\ 53111 Bonn, \\
Germany \\
 souderes@mpim-bonn.mpg.de}
\date{\today}
\begin{document}
\thanks{None of this would
 have been possible without M. Levine patience, his explanations and deep
 inputs. The author is grateful to the MPIM in Bonn for providing ideal working
 conditions and support during my stay there as a guest where this work has been
 accomplished.}
\begin{abstract}
In a previous work, the author built two families of distinguished
algebraic cycles in Bloch-Kriz cubical cycle complex over the projective line
minus three points.

The goal of this paper is to show how these cycles induce well-defined elements
in the $\HH^0$ of the bar construction of the cycle complex and thus generate 
comodules over this $\HH^0$, that is a mixed Tate motives over the projective
line minus three points.

In addition, it is shown that out of the two families only one is needed at the
bar construction level. As a consequence, the author obtains that one of the
family gives a basis of the tannakian Lie coalgebra of mixed Tate motives over
$\ps$ relatively to the tannakian Lie coalgebra of mixed Tate motives over
$\Sp(\Q)$. This in turns provides a new formula for Goncharov
motivic coproduct, which really should be thought as a coaction. This new
presentation is explicitly controlled by the structure coefficients of Ihara action
by special derivation on the free Lie algebra on two generators.
 \end{abstract}
\maketitle
\tableofcontents
\section{Introduction}
\subsection{Multiple polylogarithms and mixed Tate motives}
For a tuple $(k_1,\ldots, k_m)$ of integers, the multiple polylogarithm is
defined by:
\[
Li_{k_1,\ldots,k_m}(z)=\sum_{n_1> \cdots >n_m}\frac{z^{n_1}}{n_1^{k_1} \cdots n_m^{k_m}}
\qquad (z \in \C, \, |z|<1).
\]
This is one of the one variable version of multiple polylogarithms in many
variables defined by Goncharov in  \cite{GSFGGon}.

When $k_1 \geqs 2$, the series converges as $z$ goes to $1$ and one recovers the
multiple zeta values
\[
\zeta(k_1,\ldots k_m)=
Li_{k_1,\ldots,k_m}(1)=\sum_{n_1> \cdots >n_m}\frac{1}{n_1^{k_1} \cdots n_m^{k_m}}.
\]
The case $m=1$ recovers the classical \emph{polylogarithm} and the value of
Riemann zeta function at $k_1$: $\zeta(k_1)$.

The values of multiple polylogarithms are important in geometric as they
naturally appear as periods, in the Hodge or motivic sense, of moduli spaces
of curves in genus $0$ (\cite{BrownMZVPMS}); as periods of the fundamental
groups of $\p^1$ minus a finite set of points (\cite{DG}).  In number theory,
Zagier's conjecture \cite{PDZKthZag} predicts that regulators of number fields
are linear combinations of polylogarithms at special points. 

Bloch and Kriz in \cite{BKMTM} constructed algebraic avatars of classical
polylogarithms. However this was part of a larger work proposing in 1994 a
tannakian category $MTM(F)$ of mixed Tate motives over a number field $F$. Their
construction begins with the cubical complex computing higher Chow groups which
in the case of $\Spec(F)$ is commutative differential graded algebra  $\mc
N_F$. The $\HH^0$ of the bar construction $B(\mc N_F)$ is a Hopf algebra and they
defined $MTM(F)$ as 
\[
MTM(F)=\mx{category of comodule over } \HH^0(\mc N_F).
\]
Spitzweck  in 
\cite{SpitzweckSCVTM} (as presented in \cite{KTMMLevine}) proved that this
construction agrees with Voevodsky definition of motives \cite{Vo00} and Levine's
approach to mixed Tate motive \cite{LevTM}. More recently, M. Levine generalized
this approach in \cite{LEVTMFG} to any quasi projective variety $X$ over the
spectrum of a field $\K$ such that
\begin{itemize}
\item the motive of $X$ is mixed Tate in Cisinski and Déglise category $DM(\K)$,
\item the motive of $X$ satisfies Beilinson-Soulé vanishing property.
\end{itemize}

In order to do so, Levine used the complex $\Nge{X}{\bullet}$ of quasi-finite
cycles over $X$ 
(Definition \ref{def:qfcycle}) instead of the original Bloch-Kriz complex. This
modification has better functoriality property and allows a simpler definition
of the product structure. Working over $\Spec(\Q)$, Levine's work shows that for
$X=\ps$: 
 
\begin{thm}[{\cite{LEVTMFG}[Section 6.6 and Corollary
    6.6.2]}]\label{pi1exactseq} Let $x$ be a 
  $\Q$-point of   $\ps$. Let $G_{\ps}$ and $G_{\Q}$ denote the Spectrum of
  $\HH^0(B(\Nge{\ps}{\bullet}))$ and $\HH^0(B(\Nge{\Q}{\bullet}))$ respectively.
 Then there is a tannakian category of mixed Tate motives over $\ps$:
\[
MTM(\ps)=\mx{category of comodule over } \HH^0(\Nge{\ps}{\bullet}).
\]
Moreover there is a split exact sequence:

 \begin{equation}\label{eq:motsespi1}
 \begin{tikzpicture}[baseline=(current bounding box.center)]
\matrix (m) [matrix of math nodes,
 row sep=0.6em, column sep=2.5em, 
 text height=1.5ex, text depth=0.25ex] 
 {1 & {\pi_1^{mot}(\ps,x)} & G_{\ps} &G_{\Q} & 1
 \\};
 \path[->,font=\scriptsize]
 (m-1-1) edge node[auto] {} (m-1-2)
(m-1-2) edge node[auto] {} (m-1-3)
(m-1-3) edge node[auto] {$p^*$} (m-1-4)
(m-1-4) edge node[auto] {} (m-1-5);
\path[->,font=\scriptsize, bend left]
(m-1-4) edge node[auto] {$x^*$} (m-1-3);
 \end{tikzpicture}
 \end{equation}
where $p$ is the structural morphism $p: \ps \lra \Sp(\Q)$. In the above
exact sequence ${\pi_1^{mot}(\ps,x)}$ denotes Deligne-Goncharov motivic fundamental
group \cite{DG}.
\end{thm}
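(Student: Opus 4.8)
The plan is to place $\ps$ inside Levine's general framework recalled above and then to read the exact sequence off the functoriality of the quasi-finite cycle complex. First I would verify the two hypotheses for $X=\ps$. That the motive of $\ps$ is mixed Tate in $DM(\Q)$ follows from the localization (Gysin) triangle attached to the three rational punctures of $\p^1$: together with $M(\p^1)=\Q(0)\oplus\Q(1)[2]$ it shows that $M(\ps)$ lies in the triangulated subcategory generated by $\Q(0)$ and $\Q(1)$ (in fact $M(\ps)\cong \Q(0)\oplus\Q(1)[1]^{2}$, the connecting map being split), hence is mixed Tate. The Beilinson--Soul\'e vanishing for $\ps$ then reduces to the same vanishing for $\Sp(\Q)$ --- which holds by Borel's computation of the rational $K$-theory of number fields --- because the same triangle expresses the motivic cohomology of $\ps$ through that of $\Sp(\Q)$ with Tate twists. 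With both hypotheses in place, Levine's construction applies verbatim and produces the tannakian category $MTM(\ps)$ together with its tannakian group $G_{\ps}=\Sp(\HH^0(B(\Nge{\ps}{\bullet})))$.

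Next I would build the two horizontal maps, exploiting exactly the advertised advantage of the complex $\Nge{X}{\bullet}$ over the original Bloch--Kriz complex, namely its good contravariant functoriality in $X$. Applying it to the structural morphism $p:\ps\lra\Sp(\Q)$ and to the rational point $x:\Sp(\Q)\lra\ps$ yields morphisms of commutative differential graded algebras $p^*:\Nge{\Q}{\bullet}\lra\Nge{\ps}{\bullet}$ and $x^*:\Nge{\ps}{\bullet}\lra\Nge{\Q}{\bullet}$; passing to the bar construction and to $\HH^0$ gives morphisms of Hopf algebras, and taking spectra produces $p^*:G_{\ps}\lra G_{\Q}$ and $x^*:G_{\Q}\lra G_{\ps}$. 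Since $p\circ x=\id_{\Sp(\Q)}$, functoriality gives $x^*\circ p^*=\id$ on $\Nge{\Q}{\bullet}$, hence on the bar construction, so that after taking spectra the composite $G_{\Q}\xrightarrow{x^*}G_{\ps}\xrightarrow{p^*}G_{\Q}$ is the identity. This already furnishes the splitting $x^*$ of the sequence.

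It then remains to prove exactness, for which I would invoke the standard tannakian criterion (Deligne--Milne / Esnault--Hai) characterising an exact sequence $1\to N\to G\to Q\to 1$ of affine group schemes in terms of the associated tensor functors. Exactness on the right --- faithful flatness of $p^*:G_{\ps}\to G_{\Q}$ --- amounts to the full faithfulness of $p^*:MTM(\Q)\to MTM(\ps)$ together with closure of its essential image under subobjects; both follow formally from the presence of the retraction $x^*$, since $\ps$ is geometrically connected. To identify the kernel I would show that an object of $MTM(\ps)$ is trivial on $N$ precisely when it is pulled back along $p$, so that $N$ is the tannakian group of the relative category of motives over $\ps$ that become constant over $\Sp(\Q)$; by construction this relative category is the one generated by the motivic fundamental torsor of $\ps$ based at $x$, whence $N=\pi_1^{mot}(\ps,x)$ in the sense of Deligne--Goncharov \cite{DG}.

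The \emph{main obstacle} is this last identification of the kernel with the Deligne--Goncharov motivic fundamental group. Concretely, one must match the cycle-theoretic Hopf algebra $\HH^0(B(\Nge{\ps}{\bullet}))$, taken relatively to $\HH^0(B(\Nge{\Q}{\bullet}))$, with the coordinate ring of the prounipotent completion $\pi_1^{mot}(\ps,x)$. This requires comparing Levine's bar construction with the cosimplicial/cobar model of the motivic path space, and invoking the compatibility between Levine's framework and the Voevodsky--Spitzweck definition of motives, so that the weight filtrations and the coproducts on both sides agree. Once this comparison is secured, everything else in the argument is formal tannakian bookkeeping.
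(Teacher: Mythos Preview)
This theorem is not proved in the present paper at all: it is quoted from Levine \cite{LEVTMFG} (Section~6.6 and Corollary~6.6.2), both here in the introduction and again as Theorem~\ref{thm:pi1exactseq} in Section~\ref{subsec:relbasis}. The only indication the paper gives about the argument is the one-line remark preceding Theorem~\ref{thm:pi1exactseq}, namely that Levine ``uses a relative bar-construction in order to relate $G_{MTM(X)}$ to the motivic fundamental group of $X$ of Goncharov and Deligne''. There is therefore no in-paper proof to compare your proposal against.

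That said, your sketch is a sensible reconstruction of how such a statement is established, and it is compatible with the hint the paper does give. Verifying the mixed-Tate and Beilinson--Soul\'e hypotheses for $\ps$, producing $p^*$ and $x^*$ by the functoriality of $\Nge{-}{\bullet}$, and getting the splitting from $p\circ x=\id$ are all straightforward and match how the paper uses these maps elsewhere (e.g.\ the analogous triangle with $j^*$, $p_1^*$, $i_1^*$). You correctly flag the genuine content as the identification of the kernel with $\pi_1^{mot}(\ps,x)$; this is precisely where Levine's relative bar construction enters, and your proposal to compare the cycle-theoretic relative Hopf algebra with the cobar model of the motivic path torsor is the right shape of argument. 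The only caution is that the tannakian exactness criterion you invoke requires more than the existence of a section to conclude that the essential image of $p^*$ is closed under subquotients; one typically needs the full strength of the Beilinson--Soul\'e vanishing (to get a $t$-structure and semisimplicity of the graded pieces) rather than just geometric connectedness, so that step deserves a word of justification.
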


The exact sequence \eqref{eq:motsespi1} is the motivic avatar of the short exact
sequence for etale fundamental groups. M. Levine however did not produce any
specific motives. In particular, he did not produce any specific element in
$\HH^0(B(\Nge{\ps}{\bullet}))$  ; a natural motive being then the comodule
cogenerated by such an element.
\subsection{Distinguished algebraic 
cycles over $\ps$}\label{subsec:alg-cycle-fam0}
In order to describe explicitly some elements in $\HH^0(B(\Nge{\ps}{\bullet}))$, a
first possible step is to produce a family of degree $1$ elements in $\Nge{\ps}{\bullet}$
which have a decomposable boundary inside the family. More explicitly, the
differential of such an element is a linear combination of products of other
elements inside the family. 

In \cite{SouMPCC} the author produces such a family. Together with two explicit
degree $1$ weight $1$ algebraic cycles generating the $\HH^1(\Nge{\ps}{\bullet})$,
the author obtains:
\begin{thma} 
 For any Lyndon word $W$ in the   letters $\{0,1\}$ of length $p\geqs 2$, there exists
  a cycle $\Lc_W$  in  $\cNg{\ps}{1}(p)$; i.e. a cycle of codimension
  $p$ in $\ps \times \A^{2p-1}\times A^p$ dominant and quasi-finite over $\ps
  \times \A^{2p-1}$. 

$\Lc_W$ satisfies :%
\begin{itemize}
\item $\Lc_W$ has a decomposable boundary,
\item  $\Lc_W$ admits an equidimensional extension to  $\A^1$ with
  empty fiber at $0$.
\end{itemize}
A similar statement holds for $1$ in place of $0$.
\end{thma}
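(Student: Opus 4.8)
The plan is to build the cycles $\Lc_W$ by induction on the length $p$ of the Lyndon word $W$, taking as the base case ($p=1$) the two distinguished weight-one cycles attached to the single letters $0$ and $1$, and then to read off the three asserted properties from an explicit rational parametrization modeled on the iterated-integral presentation of multiple polylogarithms. Concretely, I would code the letters by differential forms, $0 \leftrightarrow \omi{0} = \frac{\dd t}{t}$ and $1 \leftrightarrow \omi{1} = \frac{\dd t}{1-t}$, so that a Lyndon word $W = a_1\cdots a_p$ of length $p$ corresponds to the weight-$p$ multiple polylogarithm $Li_{k_1,\ldots,k_m}(t)$; the letters of $W$ then prescribe, floor by floor, a locus in $\ps\times\A^{2p-1}\times\A^p$ whose equations are built from the elementary expressions $x_i$, $1-x_i/x_{i-1}$ and $1-t/x_i$ appearing in $\Ldi$ and $\Ltri$, with $t$ the coordinate on $\ps$. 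Taking the Zariski closure produces $\Lc_W$, and I then have to check that it has codimension $p$ and that its projection to $\ps\times\A^{2p-1}$ is dominant and quasi-finite, so that $\Lc_W$ is a legitimate element of $\cNg{\ps}{1}(p)$. Admissibility (proper intersection with the cubical faces) and quasi-finiteness hold because, generically along each face, the defining functions remain algebraically independent over the base; this is a direct but bookkeeping-heavy verification that the parametrization degenerates in the expected codimension only.

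The main obstacle is the boundary computation. The differential $\dN\Lc_W$ is the alternating sum of the codimension-one faces obtained by sending the cube coordinates to $0$ or $\infty$. Most of these faces are killed — either they lie in a forbidden (degenerate) face and so vanish in the cubical complex, or the specialization drops dimension — and the surviving faces are in bijection with the admissible splittings of the word $W$; for each such splitting the face factors, under the product of $\Nge{\ps}{\bullet}$, as an external product of two strictly shorter multiple polylogarithm cycles. The heart of the argument is then to show that this alternating sum can be rewritten, using the shuffle relations among these cycles and the fact that Lyndon words form a free generating set, as a $\Z$-linear combination of products $\Lc_{W'}\cdot\Lc_{W''}$ with $W',W''$ again Lyndon of smaller length; this is exactly the decomposability of the boundary, and it simultaneously matches the decomposition with Goncharov's cobracket. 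Controlling the cancellations among faces and tracking the orientation signs of the cube is the delicate combinatorial core, and it is where the tree description of the cycles (and the recursion encoded by the tree contractions) does the real work.

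Finally, for the equidimensional extension I would regard $t$ as a coordinate on $\A^1\ni 0$, form the closure $\ol{\Lc_W}$ inside $\A^1\times\A^{2p-1}\times\A^p$, and check that its projection to $\A^1$ stays equidimensional, so that $\ol{\Lc_W}$ defines a class in the equidimensional complex over $\A^1$. Emptiness of the fiber over $t=0$ is then forced by the factors $1-t/x_i$: at $t=0$ each such coordinate takes the removed value of the cube, so every branch of the family escapes the affine cube and no component of the closure survives over the origin. The analogous family $\Lcu_W$ attached to the letter $1$ is obtained from $\Lc_W$ through the involution $t\mapsto 1-t$ of $\ps$, which exchanges the base points $0$ and $1$ and transports the whole construction — decomposability and the equidimensional extension (now with empty fiber at $1$) included — to its $1$-analogue, yielding the last sentence of the statement.
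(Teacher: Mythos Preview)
This theorem is quoted from \cite{SouMPCC} rather than proved in the present paper, but the paper does sketch the construction (see the two bullets immediately after the statement, and Section~\ref{subsec:diffsysforcycle}), and your proposal diverges from it in a way that leaves a real gap. The construction in \cite{SouMPCC} runs in the \emph{opposite} logical order from yours: one first \emph{prescribes} the boundary combinatorially, reading it off from the Lie coalgebra $\Tcl[1;x]$ dual to Ihara's action by special derivations (equations \eqref{ED-Tc}--\eqref{ED-Tcu}, translated into \eqref{ED-Lc}--\eqref{ED-Lcu}), and only then \emph{integrates} that boundary using the homotopy operator $m_\ve^*$ coming from pull-back along the multiplication $m_\ve:\A^1\times\square^1\to\A^1$. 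Proposition~\ref{mehomotopy} gives $\da[\A^1]\circ m_\ve^*+m_\ve^*\circ\da[\A^1]=\id-p_\ve^*\circ i_\ve^*$, so that setting $\ol{\Lce_W}:=m_\ve^*(\ol{A_W})$ automatically yields a cycle with the prescribed decomposable boundary, quasi-finite over the base, and with empty fiber at $\ve$; none of these properties has to be verified face by face.

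Your route---parametrize the cycle by the iterated-integral coordinates $x_i$, $1-x_i/x_{i-1}$, $1-t/x_i$ and then compute all faces and reorganize them via shuffle relations---is the Bloch--Kriz/Gangl--Goncharov--Levin style that the paper explicitly sets itself apart from, and it hides a genuine obstruction: the naive parametrized polylogarithm cycles are typically \emph{not} admissible, so the ``bookkeeping-heavy verification'' you defer is exactly where this approach tends to fail, not merely become tedious. Your decomposability argument also has no mechanism ensuring that the surviving faces already lie in the inductively constructed family; in the paper's approach this is automatic because the boundary is prescribed before the cycle exists. Finally, $\Lcu_W$ is not obtained as $\theta^*\Lc_W$ for $\theta:t\mapsto 1-t$: it is built as $m_1^*$ applied to a \emph{different} prescribed boundary (coefficients $\ap,\bp$ from \eqref{apbpcoef}, not $a,b$), and the comparison between $\Lcb_W$ and $\Lcub_W$ is precisely the nontrivial content of Theorem~\ref{LcLcuBA1}, not a formal consequence of a symmetry.
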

The above result relies on
\begin{itemize}
\item The dual of the action of the free Lie algebra on two generators on itself by Ihara
  special derivations in order to ``guess'' the differential of cycles
  $\Lc_W$. 
\item The pull-back by the multiplication $\A^1 \times A^1 \lra \A^1$ in order
  to build the cycles $\Lc_W$ from their boundaries.
\end{itemize}
The free Lie algebra on two generators $\Lie(X_0,X_1)$ is the Lie algebra
associated to ${\pi_1^{mot}(\ps,x)}$ from the exact sequence
\eqref{eq:motsespi1} and hence appears naturally in the construction. However its graded
  dual $Q_{geom}$, which is more closely related to
  $\HH^0(B(\Nge{\ps}{\bullet}))$, is more natural in our context. It appears  in the
  sequence dual to \eqref{eq:motsespi1} :
\[
0 \lra Q_{\Q} \lra Q_{\ps} \lra Q_{geom} \lra 0
\]
where $Q_{\ps}$ and $Q_{\Q}$ denote respectively the set of indecomposable
elements of
$\HH^0(B(\Nge{\ps}{\bullet}))$ and $\HH^0(B(\Nge{\Spec(\Q)}{\bullet}))$.
\subsection{Main results}
In this paper, using the unit of the adjunction between bar and cobar
construction in the commutative/coLie case, we lift the above algebraic cycle to
elements in $Q_{\ps}$ viewed as a subspace of $\HH^0(B(\Nge{\ps}{\bullet}))$ by
the mean of Hain's projector $\psh$ (see \cite{HainIEBC} or Section
\ref{subsub:barcdga}). Let $\pi_1 : B(\Nge{\ps}{\bullet}) \lra \cNg{X}{\bullet}$
the projection onto the tensor degree $1$ part of the bar construction. We obtain 
\begin{thma}[{Theorem \ref{bareltX}}]
For any Lyndon word $W$ of length $p\geqs 2$ there exist elements $\Lcb_W$,
in the bar construction  $B(\Nge{\ps}{\bullet}$ satisfying:
\begin{itemize}
\item One has $\pi_1(\Lcb_W)=\Lc_W$. 
\item It is in the image of  $\psh$; that is in $Q_{\ps}$;
\item It is of degree $0$ and map to $0$ under the bar differential;
  it induces a class in $\HH^0(B_{\ps})$ and in $\HH^0(Q_{\ps})$.
\item Its cobracket in $Q_{\ps}$  is given  by the  differential of $\Lc_W$.
\end{itemize}
A similar statement holds for cycles $\Lcu_W$ and constant cycles $\Lc_W(1)$
induced by the fiber at $1$ of $\Lc_W$ (after extension to $\A^1$).
\end{thma}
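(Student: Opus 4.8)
The plan is to realise $\Lcb_W$ as a single $D$-closed element of total degree $0$ in the bar complex whose only tensor-degree-$1$ component is $\Lc_W$, obtained by dualising the Ihara action in exactly the way the boundaries $\dd\Lc_W$ were produced in \cite{SouMPCC}. Recall that $B(\Nge{\ps}{\bullet})$ carries the total differential $D=\din+\partial$, where $\din$ is induced entrywise by the internal differential $\dd$ of $\mc N=\Nge{\ps}{\bullet}$ and $\partial$ is the multiplication differential contracting two adjacent bars into their product. An element $[a_1|\cdots|a_n]$ with every $a_i\in\Nge{\ps}{1}$ has total degree $0$, so I look for
\[
\Lcb_W \;=\; [\Lc_W] \;+\; \sum_{n\geqs 2}\,[\,a^{(n)}_1\,|\cdots|\,a^{(n)}_n\,],\qquad a^{(n)}_i\in\Nge{\ps}{1},
\]
and then apply Hain's projector $\psh$, which is compatible with $D$ and projects onto the indecomposables $Q_{\ps}$. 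Since $\psh$ restricts to the identity on tensor degree $1$, the relation $\pu(\psh\,\Lcb_W)=\Lc_W$ will be automatic.

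The essential input is the decomposability recorded in the Theorem quoted from \cite{SouMPCC}: $\dd\Lc_W=\sum_{U,V}c^{W}_{U,V}\,\Lc_U\cdot\Lc_V$, the sum running over Lyndon words $U,V$ of length $<p$, with structure constants $c^W_{U,V}$ read off from the special derivations. On tensor degree $1$ one has $\din[\Lc_W]=[\dd\Lc_W]=\sum_{U,V}c^W_{U,V}[\Lc_U\cdot\Lc_V]$, and since $\partial[\Lc_U|\Lc_V]=\pm[\Lc_U\cdot\Lc_V]$, the correction $\mp\sum c^W_{U,V}[\Lc_U|\Lc_V]$ cancels the leading failure of $D$-closedness. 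Applying $\din$ to this correction produces again decomposable entries (because $\dd\Lc_U,\dd\Lc_V$ are decomposable), which are cancelled by tensor-degree-$3$ corrections, and so on; the process terminates because the bars strictly decrease in length. I would organise this as an induction on $p=\ell(W)$, assembling the higher corrections of $\Lcb_W$ from the already-constructed closed $\Lcb_U,\Lcb_V$. Concretely, these corrections are naturally indexed by the rooted binary trees encoding iterated coproducts, weighted by products of the $c^W_{U,V}$, in line with the tree formalism used throughout the paper; this telescope is precisely the unit of the bar--cobar adjunction in the commutative/coLie setting.

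With $\Lcb_W$ in hand I would check the four assertions. Degree $0$, membership in $\im(\psh)=Q_{\ps}$, and $\pu(\Lcb_W)=\Lc_W$ are built into the construction. The substance is closedness $D\,\Lcb_W=0$: the tensor-degree-$1$ and $2$ parts cancel by the choice of the leading correction, and the vanishing of all higher parts reduces to a single co-Jacobi identity on the coefficients $c^W_{U,V}$. \emph{This is the main obstacle}: that identity is the translation of $\dd^2\Lc_W=0$ through the reduced coproduct, dual to the Jacobi identity for the Ihara special derivations, and making the combinatorics of $\partial$ match the coLie structure term-by-term is where genuine care is needed. Once $\Lcb_W$ is $D$-closed of degree $0$ it defines a class in $\HH^0(B(\Nge{\ps}{\bullet}))$, and its image under $\psh$ a class in $\HH^0(Q_{\ps})$. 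Finally, the cobracket on $Q_{\ps}$, induced by the reduced coproduct followed by $\psh\ot\psh$, evaluated on $\Lcb_W$ returns $\sum_{U,V}c^W_{U,V}\,\Lc_U\w\Lc_V$, i.e.\ the antisymmetrisation of $\dd\Lc_W$; this is the asserted identity ``cobracket $=$ differential of $\Lc_W$''.

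The statements for $\Lcu_W$ and for the constant cycles $\Lc_W(1)$ follow by the same induction. For $\Lcu_W$ one uses the symmetry exchanging the letters $0$ and $1$, under which the entire construction is equivariant. For $\Lc_W(1)$ one invokes the equidimensional extension of $\Lc_W$ to $\A^1$ with empty fibre at $0$ supplied by the cited Theorem: the fibre at $1$ is a well-defined cycle over $\Sp(\Q)$ whose decomposable boundary is the fibre at $1$ of $\dd\Lc_W$, so the identical telescoping argument produces a closed bar element, now in $Q_{\Q}$, with the analogous properties.
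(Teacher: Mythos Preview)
Your approach is essentially the paper's: both realise $\Lcb_W$ as the image of the coLie-coalgebra generator under the unit of the commutative/coLie bar--cobar adjunction, followed by the morphism induced by a cdga map $\psi:\Omega_{coL}(\Tcl[1;x])\to\Nge{X}{\bullet}$ sending suspended generators to the cycles. The paper packages this as $\Lcb_W=\psi_Q\circ\upsilon(\Tc W)$; your telescoping description is what $\upsilon$ does explicitly (and the paper records essentially this formula for $\upsilon$ via trees in its ``Claim'').

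One point needs correction. The boundary of $\Lc_W$ is not purely in the $\Lc$-family: equation (ED-$\Lc$) reads
\[
\partial\Lc_W=-\Bigl(\sum_{U<V}a^W_{U,V}\Lc_U\Lc_V+\sum_{U,V}b^W_{U,V}\Lcu_U\Lc_V\Bigr),
\]
so your inductive construction of $\Lcb_W$ already requires the $\Lcub_U$ for shorter $U$. The two families are coupled and must be built simultaneously; the paper achieves this by using the full Lie coalgebra $\Tcl[1;x]$ (spanned by both $\Tc W$ and $\Tcu W$) and a single cdga morphism $\psi$ treating them at once. Consequently your proposal to obtain $\Lcub_W$ afterwards ``by the symmetry exchanging $0$ and $1$'' is neither the paper's route nor a shortcut: the symmetry relating $\Lc$ and $\Lcu$ is the base involution $t\mapsto 1-t$ (via $m_1=(\theta\times\id)\circ m_0\circ\theta$), not a letter swap in the Lyndon word, and in any case it does not decouple the two inductions. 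For the constant cycles $\Lc_W(1)$ the paper restricts to the sub-coLie-algebra $\Tcl[1]$ and the map $s\T W(1)\mapsto\Lc_W(1)$, which matches your last paragraph.
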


Then we show that the elements $\Lcb_W$ and $\Lcub_W$ are related:
\begin{thma}[{Theorem \ref{LcLcuBA1}}]
For any Lyndon word $W$ of length $p\geqs 2$ the following relation holds
 in $\HH^0(Q_{X})=Q_{\HH^0(B_{X})}$
\begin{equation}
\Lcb_W-\Lcub_W= \Lcb_W(1).
\end{equation}
\end{thma}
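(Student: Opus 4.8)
The plan is to deduce the identity from a relation that already holds at the level of the cycle complex, and then to upgrade it to $\HH^0(Q_{\ps})$ by an induction on the weight that is governed by the cobracket. First I would examine the three cycles underlying the statement: $\Lc_W$, attached to the tangential base point at $0$; its analogue $\Lcu_W$ at $1$; and the constant cycle $\Lc_W(1)$, obtained as the fiber at $1$ of the equidimensional extension of $\Lc_W$ to $\A^1$. Using that extension together with the multiplication pull-back from which the families are built, I would produce an explicit cycle over $\ps \times \A^1$ whose two specializations recover $\Lc_W-\Lcu_W$ and $\Lc_W(1)$; comparing its boundary in the extra $\A^1$-direction should show that
\[
\pi_1\bigl(\Lcb_W - \Lcub_W - \Lcb_W(1)\bigr)
\]
is a boundary, hence vanishes in the weight-$p$ part of $\HH^1(\Nge{\ps}{\bullet})$ that controls the primitives of $\HH^0(Q_{\ps})$. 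This is the geometric heart of the argument and, I expect, the main obstacle: one must track carefully how the change of tangential base point between $0$ and $1$ is realized by the straight-path contribution, and verify that the resulting correction is \emph{exactly} the constant cycle $\Lc_W(1)$ and not merely a constant modulo boundaries.

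Next I would record that all three terms are already well-defined elements of $\HH^0(Q_{\ps})$ by Theorem \ref{bareltX}, so their combination $D_W := \Lcb_W - \Lcub_W - \Lcb_W(1)$ is a well-defined class there, and that $\pi_1(D_W)$ represents $0$ by the previous step. It then remains to see that a class in $\HH^0(Q_{\ps})$ killed by $\pi_1$ and of vanishing cobracket must itself be zero.

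For the cobracket, Theorem \ref{bareltX} asserts that the cobracket of each of $\Lcb_W$, $\Lcub_W$ and $\Lcb_W(1)$ is computed by the differential of the corresponding cycle, and all three differentials are prescribed by the same structure coefficients of the Ihara action by special derivations. I would check that applying $\delta$ to the claimed identity produces a relation among these differentials in which $\delta(D_W)$ is a sum of products involving only terms $D_{W'}$ attached to Lyndon words $W'$ of strictly smaller length. Running an induction on the length $p$ — the base case $p=2$ being a direct verification, where the cobracket is trivial and $\pi_1$ already pins down the class — the inductive hypothesis forces all these lower-weight contributions to vanish, so $\delta(D_W)=0$; that is, $D_W$ is primitive in the Lie coalgebra $\HH^0(Q_{\ps})$.

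Finally I would invoke the structure of the bar construction: on the primitive part of $\HH^0(Q_{\ps})$ in a fixed weight, the tensor-degree-one projection $\pi_1$ is injective, since an indecomposable bar class lying in $\ker\delta$ is detected exactly by its image under $\psh$ composed with $\pi_1$. Combined with the vanishing of $\pi_1(D_W)$ established in the first step, this forces $D_W=0$, which is precisely the asserted identity in $\HH^0(Q_{\ps})=Q_{\HH^0(B_{\ps})}$.
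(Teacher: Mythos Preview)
Your inductive scheme (show $\delta_Q(D_W)=0$ by induction, then kill the resulting primitive) is the same backbone as the paper's argument, but your step~1 --- the geometric claim that $\pi_1(D_W)=\Lc_W-\Lcu_W-\Lc_W(1)$ is a boundary in $\Nge{\ps}{\bullet}$ --- fails for $|W|\geqs 3$. By \eqref{ED-LcLcu} and \eqref{ED-Lc1} one has
\[
\dN\bigl(\Lc_W-\Lcu_W-\Lc_W(1)\bigr)=-\sum_{0<U<V<1}a_{U,V}^W\Bigl[(\Lc_U-\Lcu_U)(\Lc_V-\Lcu_V)-\Lc_U(1)\Lc_V(1)\Bigr],
\]
which is nonzero in the cycle complex (those products only agree after passing to $\HH^0(Q)$ and using the induction hypothesis). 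A non-closed element cannot be a boundary, so no homotopy $H$ with $\dN H=\pi_1(D_W)$ exists; the paper says exactly this just before the theorem. For the same reason your final step is imprecisely stated: the identification of primitives with $\HH^1(\Nge{\ps}{\bullet})$ is the Bloch--Kriz isomorphism, but it is not realized by applying $\pi_1$ to the given bar representative when that representative is not closed in tensor degree~$1$.

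The paper resolves this not over $\ps$ but over $\A^1$. After the same cobracket induction gives $\delta_Q(C_W)=0$ in $\HH^0(Q_{\A^1})$, the Bloch--Kriz identification together with $\A^1$-homotopy invariance of higher Chow groups forces the primitive $C_W$ to be represented by a \emph{constant} class $[p_1^*(C)]$ pulled back from $\Spec(\Q)$. The second key ingredient is then Lemma~\ref{lemrestA1=0}: the identity $i_1^*(\LcLcub{W})=i_1^*(\ol{\Lcb_W(1)})$ holds on the nose as bar elements, so restricting to the fiber at $1$ shows $[C]=0$ in $\HH^0(Q_{\Q})$, hence $C_W=0$. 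Only then does one pull back along $j^*$ to $\ps$. Your sketch has no analogue of either the $\A^1$-invariance step or the fiber-at-$1$ evaluation, and without them there is no mechanism to rule out that $D_W$ is a nonzero constant primitive.
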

The proof relies on the relation between the situation on $\ps$ and on $\A^1$
where the cohomology of $\Nge{\A^1}{\bullet}$ is given by constant cycle because of $\A^1$
homotopy invariance of higher Chow groups. As a corollary one obtains a
description of the cobracket of $\Lcb_W$  in terms of the structure coefficients
of Ihara action by special derivation. This makes explicit the relation between
the dual of Ihara action (or bracket) and
Goncharov motivic coproduct which here, as in Brown \cite{BrownMTMZ}, is in fact a
coaction. In a group setting, Goncharov coproduct is really the action  
of $G_{\Q}$ on $\pi_1^{mot}(\ps,x)$ induced by the short exact sequence
\eqref{eq:motsespi1}. 
 
We conclude the paper by showing at Theorem \ref{Lcbbasis} that the family of
elements $\Lcb_W$ induces a basis of $Q_{geom}$; that is a basis of $Q_{\ps}$
relatively to $Q_{\Q}$. 

Our methods are structural and geometric by opposition to Gangl Goncharov and
Levin approach \cite{GanGonLev05} toward lifting cycles to bar elements using
the combinatorics of ``rooted polygons''.  

The paper is organized as follow
\begin{itemize}
\item In the next section, Section \ref{sec:barcobar}, we begin by a short
  review of differential graded (dg) vector spaces and formalism. Then we
  present the bar cobar 
  adjunction in the case of associative algebras and coalgebras
  and in the case of commutative  algebras and Lie coalgebras.
\item In Section \ref{sec:baralgcycle}, we present the action of $\Lie(X_0,X_1)$ on
  itself by Ihara's  special derivations and  the corresponding Lie
  coalgebra. From there we recall the result from \cite{SouMPCC} constructing
  the cycles $\Lce_W$. We conclude this section by lifting the cycle to elements
  in the bar construction.
\item In section \ref{sec:relbar}, we prove that bar elements $\Lcb_W$ and
  $\Lcub_W$ are equal up to the constant (over $\ps$) bar element
  $\Lcb_W(1)$. From there we make explicit the relation with Ihara's coaction
  and prove that the elements $\Lcb_W$ provides a basis for $\Q_{geom}$ graded
  dual of the Lie algebra associated to $\pi_1^{mot}(\ps,x)$.
\end{itemize}
\section{Bar and cobar adjunctions}\label{sec:barcobar}
In this section we recall how the bar/cobar constructions gives a pair of
adjoint functors in the two following cases:
\[
B : \left \{\begin{array}{c}\text{diff. gr. ass. } \\ 
  \text{algebras} 
\end{array}\right \} 
\rightleftharpoons 
\left\{\begin{array}{c}\text{diff. gr. coass.} \\ 
\text{coalgebras} \end{array}\right\} : \Omega 
\]
and 
\[
B_{com} : \left \{\begin{array}{c}\text{diff. gr. com.} \\ 
  \text{ass. algebras} 
\end{array}\right \} 
\rightleftharpoons 
\left\{\begin{array}{c}\text{diff. gr. coLie.} \\ 
\text{coalgebras} \end{array}\right\} : \Omega_{coL} .
\]

As a differential graded commutative algebras $A$ is also an associative algebra,
we will recall how the two constructions are related in this case.

The material developed here is well known and can be found in Ginzburg-Kapranov
\cite{KDOGinKap} 
work even if their use of graded duals replaces coalgebra structures by algebra
structures.  The presentation used here is closer to the Kozul duality as
developed by Jones and Getzler in \cite{OHAGetJon}. We follow here the signs conventions
and the formalism presented by Loday and Vallette in \cite{LodValAO}. The
associative case is directly taken from
\cite[Chap. 2]{LodValAO} in a cohomological version. More about the  commutative/coLie
adjunction  can be found in \cite{OHAGetJon, KDOGinKap, MillesKCCTC, LCRHISinWal}.
\subsection{Notation and convention}
\subsubsection{Koszul sign rule} The objects are all object of the category of
(sign) \emph{graded $\Q$-vector spaces}. The degree of
an homogeneous element $v$ in $V$ is denoted by $|v|$ or $|v|_{V}$ if we want to
emphasis where $v$ is. The symmetric structure is given by the switching map
\[
\tau :V \otimes W \lra W \otimes V\, , \qquad
\tau(v\otimes w)=(-1)^{|v||w|} w\otimes v.
\]

For any maps $f: V \ra V'$ and $g : W \ra W'$ of graded spaces, the tensor
product
\[
f\otimes g : V \otimes W \lra V' \otimes W'
\]
is defined by
\[
(f\otimes g)(v\otimes w)=(-1)^{|g||v|}f(v)\otimes g(w).
\]

A \emph{differential graded} (dg) vector space is a
graded vector space equipped with a \emph{differential}  $d_V$ (or 
simply $d$) ; that is a degree $1$ linear map satisfying $d_V^2=0$. For $V$ and
$W$ two dg vector spaces the differential on $V\otimes W$ is defined by
\[
d_{V\otimes W} =\id_V\otimes d_W + d_V \otimes \id_W.
\]
\begin{defn}Let $V=\oplus_n V^n$ and $W=\oplus_n W^n$ be two graded vector
  spaces. A \emph{morphism of degree $r$}, say $f : V \lra W$,  is a collection
  of morphisms $f_n : V^n \lra W^{n+r}$. Let $\Hom(V,W)_r 
  $ be the vector space of morphisms of degree $r$.

When $V$ and $W$
are dg vector spaces, the graded vector space $\Hom(V,W)=\oplus \Hom(V,W)_r$
turns into a  a dg vector space with 
differential given by : 
\[
d_{\Hom}(f)=d_W\circ f - (-1)^r f\circ d_V
\]
for any homogeneous element $f$ of degree $r$. A  \emph{dg morphism} $f : V \lra
W$ is a morphism  satisfying $d_{\Hom}(f)=0$.  

\end{defn}

The dual of a graded vector space $V=\oplus_n V^n$ is defined by 
\[
V^*=\oplus_n \Hom_{Vect}(V^{-n},\Q)=\Hom(V,N)
\]
where the dg vector space $N$ is defined by $N=\Q$ concentrated in degree $0$
with $0$ differential. One has an obvious notion of cohomology on dg vector space.





\begin{defn}[(de)suspension]\label{suspensiondef} Let $S=s\Q$ be the $1$ dimensional
  dg vector space 
  concentrated in degree $1$ (that is $d_S=0$) generated by $s$.

The dual of $S$ is a one dimensional dg vector space  denoted by $S^{-1}$ and
generated by a degree $-1$ element $s^{-1}$ dual to $s$.

Let $V,d_V$ be a dg vector space. Its \emph{suspension} $(sV, d_{sV})$, is the dg
vector space $S\otimes V$. Its desuspension $(s^{-1}V,d_{s^{-1}V})$ is
$S^{-1}\otimes V$.
\end{defn}
There is a canonical identification $V^{n-1} \simeq (sV)^n $ given by
\[
i_s : V \lra sV \, \qquad v \longmapsto (-1)^{{|v|}_V}s\otimes v;
\]
under this identification $d_{sV}=-d_V$.  
\subsubsection{Associative dg algebra}

A \emph{differential graded associative} algebra $(A, d_A)$ abbreviated into dga
algebra  is a dg vector space equipped with a unital associative product
$\mu_A$ of degree $0$ commuting with the differential :
\[
d_A \circ \mu_A = \mu_A \circ d_{A \otimes A}
\]
and satisfying the usual commutative diagrams for an associative algebra, all
the maps involved being maps of dg vector spaces.

This last equality is nothing but Leibniz rule. The unit $1_A$ belongs to
$A^0$. On elements, one 
writes $a\cdot_{A} b$ or simply $a \cdot b$ instead of $\mu_A(a\otimes b)$.

\begin{defn}\label{defconnectedalg} The dga $A$ is \emph{connected} if $A^0=1_A \Q$.

\end{defn}

\begin{defn}[Tensor algebra]The tensor algebra over a dg vector space $V$ is
  defined by
\[
T(V)=\bigoplus_{n \geqs 0} V^{\otimes n}
\]
and equipped with the differential induced on each $V^{\otimes n}$ by $d_V$ and with
the concatenation product given by
\[
[a_1 | \cdots | a_n]\otimes [a_{n+1}|\cdots a_{n+m}] \longmapsto [a_1 | \cdots |
a_n|a_{n+1}|\cdots a_{n+m}] 
\]
where the ``bar'' notation $[a_1|\cdots |a_n]$ stands for $a_1\otimes \cdots
\otimes a_n$ in $V^{\otimes n}$.
\end{defn}

Note that the degree of $[a_1|\cdots |a_n]$ is $|a_1|_V + \cdots |a_n|_V$ and
that $T(V)$ admits a natural augmentation given by $\ve([a_1|\cdots |a_n])=0$ for
$n>0$ and  the convention $V^{\otimes 0}=\Q$. The concatenation product is
associative.

\subsubsection{Commutative and anti commutative algebras}
A commutative dga algebra $(A,d_A, \mu_A)$ or cdga algebra is a dga algebra such
that the multiplication commutes with the switching map:
\[
\begin{tikzpicture}
\matrix (m) [matrix of math nodes,
 row sep=2em, column sep=2em]
{A \otimes A & & A\otimes A  \\
 & A & \\
};
\path[->,font=\scriptsize]
(m-1-1) edge node[auto] {$\tau$} (m-1-3)
(m-1-1) edge node[below,xshift=-0.5ex] {$\mu_A$} (m-2-2);
\path[->,font=\scriptsize]
(m-1-3) edge node[auto] {$\mu_A$} (m-2-2);
\end{tikzpicture}
\] 
On homogeneous elements, this reads as 
\[
a\cdot b = (-1)^{|a||b|}b \cdot a.
\]

Let $V$ be a dg vector space and $n\geqs 1$ an integer.
The symmetric group $\mbb S_n$ acts on $V^{\otimes n}$ in two natural ways : the
symmetric action $\rho_S$ and the antisymmetric action $\rho_{\Lambda}$ (both graded).

For $i$ in $\{1, \ldots n-1\}$, let $\tau_i$ be the permutation exchanging $i$
and $i+1$. It is enough to define both actions for the $\tau_i$:
\[
\rho_S(\tau_i)=\underbrace{\id\otimes\cdots \id}_{i-1 \textrm{ factors}} \otimes \tau
\otimes \id \otimes \cdots 
\otimes \id 
\]
and
\[
\rho_{\Lambda}(\tau_i)=\underbrace{\id\otimes\cdots \id}_{i-1 \textrm{ factors}} \otimes
(-\tau) \otimes \id \otimes \cdots 
\otimes \id.
\]
where $\tau$ is the usual switching map. Both action involved signs. The graded
signature $\ve^{gr}(\sigma)\in \{\pm 1\}$ of a permutation
$\sigma$ is defined by
\[
\rho_S(\sigma)(v_1\otimes \cdots \otimes
v_n)=\ve^{gr}(\sigma)v_{\sigma^{-1}(1)}\otimes \cdots \otimes v_{\sigma^{-1}(n)}.
\]
Then, one has 
\[
\rho_{\Lambda}(\sigma)(v_1\otimes \cdots \otimes
v_n)=\ve(\sigma)\ve^{gr}(\sigma)v_{\sigma^{-1}(1)}\otimes \cdots \otimes v_{\sigma^{-1}(n)}
\]
where $\ve(\sigma)$ is the usual signature. 
Let $p_{S, n}$ be the projector defined on $V^{\otimes n}$ by
\[
p_{S, n}=\frac{1}{n!}\left( \sum_{\sigma \in \mbb S_n}\rho_S(\sigma)\right).
\]

\begin{defn} The (graded) symmetric algebra $S^{gr}(V)$  over 
  $V$
is defined as the quotient of $T(V)$ by the two side ideal generated by $(\id
-\tau)(a\otimes b)$.

One can write 
\[
S^{gr}(V)=\bigoplus_{n \geqs 0} S^{gr,n}(V)
\]
where $S^{gr, n}(V)=V^{\odot n}$ is the quotient of $V^{\otimes n}$ by the
symmetric action of $\mbb S_n$. 
\end{defn}
One also has the isomorphism $S^{gr,n}(V)=p_{S,n}(V^{\otimes n})$. $S^{gr}(V)$ is
the free commutative algebra over $V$. We may write simply $V\odot V$ for $S^{gr,2}(V)$.

\begin{defn} The (graded) antisymmetric algebra $\Lambda^{gr}(V)$ 
  $V$
is defined as the quotient of $T(V)$ by the two side ideal generated by $(\id
+\tau)(a\otimes b)$.

One can write 
\[
\Lambda^{gr}(V)=\bigoplus_{n \geqs 0} \Lambda^{gr, n}(V)
\]
where $\Lambda^{gr, n}(V)=V^{\w n}$ is the quotient of $V^{\otimes n}$ by the
antisymmetric action of $\mbb S_n$.  We may write $V\w V$ for $\Lambda^{gr,2}(V)$.
\end{defn}
As in the symmetric case, $\Lambda^{gr}(V)$ is also the image of $V^{\otimes n}$
by the projector $p_{\Lambda, n}=1/(n!)(\sum_{\sigma} \rho_{\Lambda}(\sigma))$
and $\Lambda(V)$ is the free antisymmetric algebra over $V$.

\subsubsection{Associative coalgebra}
A \emph{differential graded associative} coalgebra $(C, d_C)$ abbreviated into dga
coalgebra  is a dg vector space equipped with a counital associative coproduct
$\Delta_C$ of degree $0$ commuting with the differential :
\[
d_{C\otimes C} \circ \Delta_C = \Delta_C \circ d_{C}
\]
and satisfying the usual commutative diagrams for an associative coalgebra, all
the maps involved being maps of dg vector spaces. 

The iterated coproduct $\Delta^n : C \lra C^{\otimes (n+1)}$ is
\[
\Delta^n=(\Delta\otimes \id \otimes \cdots \otimes \id )\Delta^{n-1} \qquad ~ 
\qquad \mx{and} \qquad \Delta^1=\Delta.
\]
This definition is independent of the place of the
$\Delta$ factor (here in first position) because of the associativity of the coproduct.
We Will use  Sweedler's notation:
\[
\Delta(x)=\sum x_{(1)}\otimes x_{(2)}\, , \qquad 
(\Delta \otimes \id)\Delta(x)=\sum x_{(1)}\otimes x_{(2)} \otimes x_{(3)} 
\]
and 
\[
 \Delta^n(x)=\sum x_{(1)}\otimes \cdots \otimes x_{(n+1)}.
\]

A coaugmentation on $C$ is a
morphism of dga coalgebra $u : \Q \ra C$. In this case, $C$ is canonically
isomorphic to $\ker(\ve)\oplus \Q u(1)$. Let $\bar C=\ker(\ve)$ be the kernel of the
counit. 

When $C$ is coaugmented, the
\emph{reduced coproduct} is $\bar{\Delta}=\Delta-1\otimes \id - \id \otimes
1$. It is associative and there is an iterated reduced coproduct $\bar
\Delta^{n}$ for which we  also use Sweedler's notation.

\begin{defn}$C$ is \emph{conilpotent} when it is coaugmented and when, for any $x$ in
  $C$, one has $\bar \Delta^n(x)$ 
vanishes for $n$ large enough.
\end{defn} 
A \emph{cofree} associative dga coalgebra over the dg vector space is by
definition a \emph{conilpotent} dga coalgebra $F^c(V)$ equipped with a linear map
of degree $0$
$p : F^c(V) \lra V$ commuting with the differential such that $p(1)=0$. It 
factors any morphism of dg vector space $\phi : C \lra V$ where $C$ is a
conilpotent dga coalgebra with $\phi(1)=0$.

\begin{defn}[Tensor coalgebra] The tensor coalgebra over  $V$ is
  defined by
\[
T^c(V)=\bigoplus_{n \geqs 0} V^{\otimes n}
\]
and equipped with the differential induced on each $V^{\otimes n}$ by $d_V$ and with
the deconcatenation coproduct given by
\[
[a_1 | \cdots | a_n] \longmapsto \sum_{i=0}^{n+1} [a_1 | \cdots |a_i]\otimes
[a_{i+1}|\cdots a_{n}]. 
\]
\end{defn}
The deconcatenation coproduct is associative. The natural
projection $\pi_V :T^c(V) \lra \Q=V^{\otimes 0}$ onto the tensor degree $0$ part is a counit
for  $T^c(V)$ while the inclusion
$\Q=V^{\otimes 0} \lra T^c(V)$ gives the coaugmentation. The tensor coalgebra  $T^c(V)$
is the cofree counital 
dga coalgebra over $V$.

\subsubsection{dg Lie algebra}
We review here the definition of Lie algebra and Lie coalgebra in the dg
formalism. For any dg vector space $V$, let $\xi$ be the cyclic permutation of $V \otimes V
\otimes V$ defined by
\[
\xi=(\id \otimes \tau)(\tau \otimes \id).
\] 
It corresponds to the cycle sending $1$ to $3$, $3$ to $2$ and $2$ to $1$. 

\begin{defn}A dg Lie algebra $L$ is a dg vector space equipped with a degree $0$
  map of dg vector spaces $c : L \otimes L \lra L$ ($c$ stands for ``crochet'') satisfying
\[
c \circ \tau = -c \qquad \mx{and} \qquad 
c \circ (c\otimes \id) \circ (\id + \xi + \xi^2)  =0.
\]

On elements, we will use a bracket notation $[x,y]$ instead of $c(x\otimes y)$.
\end{defn} 
In the above definition, the first relation is the usual antisymmetry of the
bracket which gives in the dg context:
\[
[x,y]=(-1)^{|x||y|}[y,x].
\]
The second relation is the Jacobi relation:
\[
[[x,y],z]+(-1)^{|x|(|y|+|z|)}[[y,z],x]+(-1)^{|z|(|y|+|x|)}[[z,x],y]=0.
\]

One remarks that $(c\otimes \id)\circ
\xi = \tau \circ (\id \otimes c)$ and that  
$(c\otimes \id)\circ \xi^2=((c \circ \tau )\otimes \id)\circ (\id \otimes
\tau)$. Using this and the antisymmetry relation, one can rewrite the Jacobi
relation as a Leibniz relation:
\[
c\circ (c \otimes \id)=c\circ (\id\otimes c)+c\circ (c\otimes \id)\circ (\id
\otimes \tau).
\]

The definition of a dg Lie coalgebra is
dual to the definition of a Lie algebra.
\begin{defn}A dg Lie coalgebra $\cL$ is a dg vector space equipped with a degree $0$
  map of dg vector spaces $\delta : \cL  \lra \cL\otimes \cL$  satisfying
\[
\tau \circ \delta = -\delta \qquad \mx{and} \qquad 
(\id + \xi + \xi^2)\circ (\delta \otimes \id) \circ \delta  =0
\]
\end{defn} 

The first condition shows that $\delta$ induces a map (again denoted by $\delta$)
\[
\delta : \cL \lra \cL \w \cL.
\]
Let $\tau_{12}: \cL^{\otimes 3} \lra \cL^{\otimes 3}$ be the permutation
exchanging the two first factors. The second condition show that the following
diagram is commutative 
\[
\begin{tikzpicture}
\matrix (m) [matrix of math nodes,
 row sep=1em, column sep=5em, text height=1em, text depth=1ex]
{ & \cL \otimes \cL & \cL \otimes \cL \otimes \cL \\
\cL & & \\
& \Lambda^{gr,2}(\cL) & \Lambda^{gr,3}(\cL) \\
};
\path[->,font=\scriptsize]
(m-2-1) edge node[auto]{$\delta$} (m-1-2.west)
(m-2-1) edge node[auto]{$\delta$} (m-3-2.west)
(m-1-2) edge node[auto]{$\delta\otimes \id - \id \otimes \delta$} (m-1-3)
(m-3-2) edge node[auto]{$\delta\w \id - \id \w \delta$} (m-3-3)
(m-3-2) edge node[auto]{}(m-1-2)
(m-1-3) edge node[auto]{$1/6(\id-\tau_{12})(\id+\xi+\xi^2)$}(m-3-3);
\end{tikzpicture}
\]
%
and that the composition going through the bottom line is $0$. 
\subsection{Bar/cobar adjunction: associative case}
\subsubsection{Bar construction}
In this subsection, we recall briefly the bar/cobar construction and how they
give a pair of adjoint functor in the associative case.
\[
B : \left \{\begin{array}{c}\text{diff. gr. ass. } \\ 
  \text{algebras} 
\end{array}\right \} 
\rightleftharpoons 
\left\{\begin{array}{c}\text{diff. gr. coass.} \\ 
\text{coalgebras} \end{array}\right\} : \Omega.
\]

Let $(A, d_A, \mu_A, \ve_A)$ be an augmented dga algebra and $\bar A=\ker(\ve_A)$ its
augmentation ideal. The bar construction of $A$
is obtained by twisting the differential of the dga free coalgebra $T^c(\Ss \bar A)$.

The differential $d_A$ makes $\bar A$ and thus $\Ss \bar A$ into a dg vector vector
space. Let $D_1$ denote the induced differential on $T^c(\Ss \bar A)$ which in tensor
degree $n$ is:
\[
\sum_{i=1}^{n}\id^{i-1}\otimes d_{\Ss \bar A} \otimes \id^{n-i}.
\]

$S^{-1}=\Ss \Q$ admits an associative product-like map of degree $+1$
defined by:
\[
\Pi_s : \Ss\Q \otimes \Ss \Q \lra \Ss \Q \, \qquad \Pi_s(\Ss\otimes \Ss)=\Ss.
\] 
The map $\Pi_s$ and the restriction $\mu_{\bar A}$ of the multiplication $\mu_A$
to $\bar A$ induce the following map:
\[
f: \Ss \Q \otimes \bar A \otimes \Ss \Q \otimes \bar A 
\xrightarrow{\id\otimes \tau  \otimes \id}
\Ss \Q  \otimes \Ss \Q \otimes \bar A \otimes \bar A
\xrightarrow{\Pi_s\otimes \mu_{\bar A}} \Ss \Q \otimes \bar A.
\]

This map induces a degree $1$ map $D_2:T^c(\Ss \bar A) \lra T^c(\Ss \bar A)$
which satisfies $D_2^2=0$ because of the associativity of $\mu_A$

One check that the degree $1$ morphisms $D_1$ and $D_2$ commute (in the graded sense): 
\[
D_1\circ D_2+D_2 \circ D_1=0
\]

The coproduct on $\TC(\Ss \bar A)$ is given by the deconcatenation
coproduct. From these definitions, one obtains (see \cite{LodValAO}[Section
2.2.1]) the following.
\begin{lem}\label{bardef} The complex $B(A)=(\TC(\Ss \bar A),d_B)$ with
  $d_B=D_1+D_2$ and endowed with the deconcatenation coproduct $\Delta$ is a
  conilpotent dga coalgebra.
\end{lem}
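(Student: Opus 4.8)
The plan is to check, one after another, the properties defining a conilpotent dga coalgebra for $(\TC(\Ss \bar A), d_B)$ equipped with the deconcatenation coproduct $\Delta$: that $d_B$ is a degree $1$ differential, that $\Delta$ is a counital coassociative coproduct, that $\Delta$ commutes with $d_B$ in the graded sense, and that the whole is conilpotent. Three of these are essentially free. The degree of $d_B=D_1+D_2$ is $1$ since both summands have degree $1$, and $d_B^2=0$ follows by expanding $(D_1+D_2)^2=D_1^2+(D_1D_2+D_2D_1)+D_2^2$ and invoking the three relations already in place: $D_1^2=0$ (as $D_1$ is induced by the differential $d_{\Ss \bar A}$), $D_2^2=0$ (associativity of $\mu_A$), and $D_1D_2+D_2D_1=0$. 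That $\Delta$ is coassociative, counital (counit the projection onto $(\Ss \bar A)^{\otimes 0}=\Q$) and coaugmented (by the inclusion of $\Q$) is precisely the cofreeness of $\TC(\Ss \bar A)$ recorded above, and none of it depends on the choice of differential.

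The real content is the compatibility of $\Delta$ with the twisted differential, that is $(\id \otimes d_B + d_B \otimes \id)\circ \Delta = \Delta \circ d_B$ with the Koszul signs understood on the left. Because $\Delta$ already commutes with $D_1$ as part of the dg structure of $\TC(\Ss \bar A)$, it is enough to prove the analogous identity for $D_2$, i.e. that $D_2$ is a coderivation of the deconcatenation coproduct. I would deduce this from cofreeness: coderivations of the cofree conilpotent coalgebra $\TC(W)$, with $W=\Ss \bar A$, are in bijection with their corestrictions $\TC(W)\lra W$, and $D_2$ is the unique coderivation whose corestriction is the degree $1$ map $f$ extended by zero outside tensor degree $2$. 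Made explicit on a generator $[a_1|\cdots|a_n]$, this amounts to checking that deconcatenating and then contracting one adjacent pair agrees term by term with contracting first and then deconcatenating. The main obstacle, and the only place demanding care, is the bookkeeping of signs: one must track the Koszul signs produced by the identification $i_s$ and by the transposition $\tau$ occurring inside the definition of $f$, and verify that they match on both sides.

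Finally, conilpotency is immediate from the shape of $\TC(\Ss \bar A)$: every homogeneous element lies in a finite sum of tensor powers $(\Ss \bar A)^{\otimes n}$, and the reduced deconcatenation $\bar\Delta^{m}$ annihilates $(\Ss \bar A)^{\otimes n}$ as soon as $m\geqs n$. Combining the differential property, the cofree coalgebra structure, the coderivation identity for $D_2$, and this conilpotency, one concludes that $B(A)=(\TC(\Ss \bar A),d_B,\Delta)$ is a conilpotent dga coalgebra.
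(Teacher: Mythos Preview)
Your argument is correct. Note, however, that the paper does not actually supply a proof of this lemma: it simply states the result with a reference to \cite{LodValAO}, Section~2.2.1, after having recorded the relations $D_1^2=0$, $D_2^2=0$, and $D_1D_2+D_2D_1=0$. Your write-up therefore fills in what the paper leaves to the literature.

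The route you take---verifying $d_B^2=0$ from the three preparatory identities, invoking the cofree structure of $\TC(\Ss\bar A)$ for the counital coassociative coaugmented part, identifying $D_2$ as the unique coderivation with corestriction $f$ (extended by zero) to obtain compatibility with $\Delta$, and reading off conilpotency from the tensor grading---is exactly the standard argument in Loday--Vallette, so there is no genuine divergence in method. The one point worth tightening is the sign check: rather than leaving it as ``the only place demanding care,'' you could simply observe that the coderivation property for $D_2$ is automatic once one knows that on a cofree conilpotent coalgebra every linear map $\TC(W)\to W$ lifts uniquely to a coderivation, so no explicit sign verification on $[a_1|\cdots|a_n]$ is required.
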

We recall below the explicit formulas related to the bar construction $B(A)$:
\begin{itemize}
\item An homogeneous element $\mb a$ of tensor degree $n$ is denoted by
\[
[\Ss a_1|\cdots |\Ss a_n]
\]
or when the context is clear enough not to forget the shifting simply by 
$[a_1|\cdots |a_n]$. Its degree is given by:
\[
\deg_B(\mb a)=\sum_{i=1}^n\deg_{\Ss \bar A}(\Ss a_i)=\sum_{i=1}^n (\deg_A(a_i)-1)
\]
\item the coproduct is given by:
\[
\Delta(\mb a)=\sum_{i=1}^n[\Ss a_1|\cdots |\Ss a_i]\otimes [\Ss a_{i+1}|\cdots
|\Ss a_n].
\]
\item Let $\eta_{\mb a}(i)$ or simply $\eta(i)$ denote the ``partial degree'' of
  $\mb a$:
\[
\eta_{\mb a}(i)=\sum_{k=1}^i\deg_{\Ss \bar A}(\Ss a_k)=\sum_{k=1}^i(\deg_{ A}(a_k)-1).
\]
\item The differential $D_1$ and $D_2$ are explicitly given by the formulas:
\[D_1(\mb a)=-\sum_{i=1}^{n}(-1)^{\eta(i-1)}[\Ss a_1 | \cdots |\Ss d_{A}(a_i)|
\cdots | \Ss a_n]
\]
and
\[
D_2(\mb a)=-\sum_{i=1}^n(-1)^{\eta(i)}[\Ss a_1 | \cdots |\Ss \mu_{A}(a_i,a_{i+1})|
\cdots | \Ss a_n].
\]

The global minus sign in $D_1$ appears because the differential of the dg vector
space $\Ss \bar A$ is given by $d_{\Ss \bar A}(\Ss a)=-\Ss d_{A}(a)$. The other
signs are due to the Kozul sign rules taking care of the shifting.
\end{itemize}

\begin{rem}
This construction can be seen as a simplicial total complex associated 
to the complex $A$ (as in \cite{BKMTM}). Here, the augmentation makes it
possible to use 
directly $\bar A$  without referring to the tensor coalgebra over $A$ and
without the need of killing the degeneracies.
However the simplicial presentation usually masks the need of working with
the shifted complex; in particular for sign issues.

The bar construction $B(A)$ also admits a product $\sha$ which shuffles the
tensor factors. However, this extra structure becomes more interesting when $A$
is graded commutative and we will present it in the next section.
\end{rem}

The bar construction is a quasi-isomorphism invariant as shown in
\cite{LodValAO} (Proposition 2.2.4) and the construction provides a  functor:
\[
B: \left\{
 \text{aug. dga algebra}\right\} \lra \left\{ \text{coaug. dga
    coalgebra}
\right\}.
\]
\subsubsection{Cobar construction}
Analogously, one constructs the cobar functor. Let $(C, d_C, \Delta_C, \ve_C)$ be
a coaugmented dga 
coalgebra decomposed as $C=\bar C \oplus \Q$. Consider $T(s\bar C)$ the free
algebra over $s \bar C$ (with concatenation product). The differential on $C$
induces a differential $d_1$ on $T(s \bar C)$. $S=s\Q$ comes with a
coproduct-like degree $1$ map dual to $\Pi_s$:
\[
\Delta_s : s \Q \lra s \Q \otimes s \Q \, , \qquad \Delta_s(s)=-s\otimes s.
\]
The map $\Delta_s$ and the restriction of the reduced coproduct $\bar \Delta_c$
to $\bar C$ induce the following map:
\[
g: s \bar C \xrightarrow{\Delta_s \otimes \bar{\Delta}_C}
s\Q \otimes s \Q \otimes \bar C \otimes \bar C
\xrightarrow{\id \otimes \tau  \otimes \id}
s\Q \otimes \bar C\otimes s \Q  \otimes \bar C.
\]
It induces a degree $1$ map $d_2$ on $T(s \bar C)$ satisfying $d_2^2=0$
because of the coassociativity of $\Delta_c$. The two degree $1$ maps $d_1$ and
$d_2$ commute (in the graded sense):
\[
d_1 \circ d_2 +d_2 \circ d_1=0.
\]  

\begin{lem}\label{cobardef} The complex $\Omega(C)=(T(s \bar C),d_{\Omega})$ with
  $d_{\omega}=d_1+d_2$ and endowed with the concatenation product is an
  augmented dga algebra called the cobar construction of $C$.
\end{lem}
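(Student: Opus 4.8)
The plan is to read this statement as the formal dual of the bar-construction Lemma \ref{bardef} and to assemble the proof from the three structural facts already recorded in the construction of $d_1$ and $d_2$. First I would dispose of the purely algebraic structure: $T(s\bar C)$ endowed with the concatenation product is by definition the free associative unital algebra on the dg vector space $s\bar C$, so it is an associative algebra for the product alone, and the projection onto the tensor degree zero summand $\Q=(s\bar C)^{\otimes 0}$ supplies the augmentation $\ve_\Omega$. Thus the only two points left to check are that $d_\Omega=d_1+d_2$ is a square-zero differential and that it is a derivation for the concatenation product.

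For the square-zero property I would simply expand
\[
d_\Omega^2=(d_1+d_2)^2=d_1^2+(d_1\circ d_2+d_2\circ d_1)+d_2^2 .
\]
Here $d_1^2=0$ because $d_1$ is the differential induced on the tensor algebra by $d_C$ and $d_C^2=0$; the term $d_2^2$ vanishes by the coassociativity of $\Delta_C$; and the cross term $d_1\circ d_2+d_2\circ d_1$ vanishes by the graded commutation recorded just above the statement (itself a consequence of the compatibility $d_{C\otimes C}\circ\Delta_C=\Delta_C\circ d_C$). All three vanishings are already asserted in the setup, so this step is immediate.

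The remaining and genuinely load-bearing point is that $d_\Omega$ respects the concatenation product, i.e. that it is a degree $1$ derivation. I would argue that each summand is separately a derivation. The map $d_1$ is a derivation because the differential of a tensor product is defined by the graded Leibniz rule $d_{V\otimes W}=\id_V\otimes d_W+d_V\otimes\id_W$, and concatenation of words is just juxtaposition, so $d_1$ distributes over a product with the Koszul signs dictated by the partial degrees. The map $d_2$ is a derivation by its very construction: it is the extension to the free algebra of the map $g$ on the generators $s\bar C$, applied one tensor factor at a time and summed, which is exactly the formula for the derivation generated by $g$. Since the sum of two degree $1$ derivations is again a derivation, $d_\Omega$ is a derivation, which is precisely the identity expressing compatibility of the differential with the product, and $\Omega(C)$ is then an augmented dga algebra.

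The expected obstacle is not conceptual but notational: making the Koszul signs in the derivation formula for $d_2$ — and in particular in the verification that it squares to zero — match the signs built into $\Delta_s(s)=-s\otimes s$ and into the identification $i_s$ of Definition \ref{suspensiondef}. Since both $d_2^2=0$ and the anticommutation of $d_1$ with $d_2$ have already been granted in the construction, I would keep this to a remark and, for the sign chasing, either dualize the bookkeeping of the bar case or simply cite \cite[Chap.~2]{LodValAO}, from which the whole statement follows by the duality tensor algebra $\leftrightarrow$ tensor coalgebra, concatenation $\leftrightarrow$ deconcatenation, product $\leftrightarrow$ coproduct, augmentation $\leftrightarrow$ coaugmentation that already underlies Lemma \ref{bardef}.
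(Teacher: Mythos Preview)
Your proposal is correct and matches the paper's approach: the paper states this lemma without proof, treating it as an immediate consequence of the three facts already recorded in the setup ($d_1^2=0$, $d_2^2=0$ from coassociativity, and $d_1\circ d_2+d_2\circ d_1=0$), and defers any further verification to \cite[Chap.~2]{LodValAO}. Your write-up simply makes explicit the derivation property and the augmentation, which the paper leaves implicit.
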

Note that the cobar construction is not in general a quasi-isomorphisms
invariant. The reader may look at \cite[Section 2.4]{LodValAO}  for more details. 

\subsubsection{Adjunction}
The two functors bar and cobar induces an adjunction described as follows:
\begin{thm}[{\cite[Theorem 2.2.9 and Corollary 2.3.4]{LodValAO}}]
For every augmented dga algebra $A$ and every conilpotent dga
coalgebra $C$ there exist natural bijections
\[
\Hom_{\text{\rm dga alg}}
(\Omega(C), A)\simeq %
 \on{Tw}(C, A) \simeq 
\Hom_{\text{\rm  dga coalg}}
(C, B(A)) .
\]


The unit $\upsilon : C \lra B \circ \Omega(C)$ and the  counit $\epsilon :
\Omega \circ B(A) \lra A$ are quasi-isomorphisms of dga coalgebras and algebras
respectively. 
\end{thm}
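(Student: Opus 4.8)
The plan is to route everything through the convolution algebra. For the augmented dga algebra $A$ and the coaugmented dga coalgebra $C$, I would first equip $\Hom(C,A)$ with the convolution product $f\star g=\mu_A\circ(f\ot g)\circ\Delta_C$ together with the internal differential $d_{\Hom}$ introduced earlier; a direct check shows $(\Hom(C,A),d_{\Hom},\star)$ is again a dga algebra. I then define $\on{Tw}(C,A)$ to be the set of degree $1$ maps $\alpha$ vanishing on the coaugmentation and landing in $\ol A$ that satisfy the Maurer--Cartan equation
\[
d_{\Hom}(\alpha)+\alpha\star\alpha=0.
\]
The whole theorem is then the assertion that $\on{Tw}(C,A)$ is corepresented on one side by the cobar construction and represented on the other by the bar construction.

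For the bijection $\Hom_{\text{\rm dga alg}}(\Omega(C),A)\simeq\on{Tw}(C,A)$ I would use that $\Omega(C)=T(s\ol C)$ is free as a graded algebra: a morphism of graded algebras out of it is exactly a degree $0$ map $s\ol C\ra A$, equivalently, after desuspending, a degree $1$ map $\alpha:\ol C\ra A$. The point is then to unwind the requirement that this morphism commute with $d_\Omega=d_1+d_2$: the piece $d_1$ (induced by $d_C$) contributes $d_{\Hom}(\alpha)$, while the piece $d_2$ (induced by the reduced coproduct of $C$) contributes $\alpha\star\alpha$, so that commuting with $d_\Omega$ is precisely the Maurer--Cartan equation. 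Dually, for $\on{Tw}(C,A)\simeq\Hom_{\text{\rm dga coalg}}(C,B(A))$ I would use that $B(A)=\TC(\Ss\ol A)$ is cofree: a morphism of graded coalgebras into it is determined by its corestriction to the cogenerators $\Ss\ol A$, i.e. by a degree $1$ map $\alpha:C\ra A$, and compatibility with $d_B=D_1+D_2$ again becomes the Maurer--Cartan equation. Naturality in both variables follows from the functoriality of these universal constructions.

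Next I would identify the unit and counit as the images of the two tautological twisting morphisms. The projection of $B(A)$ onto its tensor-degree-$1$ part, suspended to land in $\ol A\hra A$, is a twisting morphism $\pi_A\in\on{Tw}(B(A),A)$, and under $\Hom_{\text{\rm dga alg}}(\Omega B(A),A)\simeq\on{Tw}(B(A),A)$ it corresponds to the counit $\epsilon$. Dually the inclusion of cogenerators $\iota_C:C\ra s\ol C\hra\Omega(C)$ is a twisting morphism in $\on{Tw}(C,\Omega(C))$ which corresponds under the bar bijection to the unit $\upsilon$. This makes both adjunction morphisms explicit, which is exactly what the later sections need.

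Finally, the substantive part is that $\upsilon$ and $\epsilon$ are quasi-isomorphisms. For the counit I would filter $\Omega B(A)$ by total weight, namely the number of $\ol A$-factors, and pass to the associated graded; there the component of the differential built from $\mu_A$ drops out, leaving only $d_A$ and the deconcatenation part. This associated graded is the cobar--bar complex of $A$ with trivial multiplication, which admits an explicit contracting homotopy built from the (de)suspension pairing $\Ss\leftrightarrow s$, hence is acyclic in positive weight. Thus $\epsilon$ is an isomorphism on the $E^1$-page, and since the augmentation makes the filtration exhaustive and bounded below in each degree the spectral sequence converges, giving the quasi-isomorphism. The unit $\upsilon$ is handled by the dual filtration, where conilpotence of $C$ is exactly what guarantees convergence. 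I expect this last step --- producing the contracting homotopy and controlling the signs forced by the shifts --- to be the main obstacle, everything before it being formal manipulation of the universal properties; since all of this is carried out in \cite[Chap.~2]{LodValAO}, in practice I would quote their Theorem~2.2.9 and Corollary~2.3.4 for the conclusion.
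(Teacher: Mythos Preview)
Your proposal is correct and in fact goes beyond what the paper does: the paper gives no proof at all, simply citing \cite[Theorem 2.2.9 and Corollary 2.3.4]{LodValAO}, while you sketch the standard convolution-algebra/Maurer--Cartan argument that is precisely the one carried out in that reference, and then conclude by citing the same source. So your approach and the paper's coincide, with your write-up supplying the outline that the paper omits.
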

\subsection{Bar/cobar adjunction: commutative algebras/Lie Coalgebras}
In this section we recall the bar/cobar adjunction in the commutative/coLie
case giving a pair of functors:
 
\[
B_{com} : \left \{\begin{array}{c}\text{diff. gr. com.} \\ 
  \text{ass. algebras} 
\end{array}\right \} 
\rightleftharpoons 
\left\{\begin{array}{c}\text{diff. gr. coLie.} \\ 
\text{coalgebras} \end{array}\right\} : \Omega_{coL} .
\]

The cobar construction in the coLie case is a little more delicate. We will
concentrate on this construction. The bar construction in the commutative case,
will be presented  as the set of
indecomposable elements of the associative bar construction. A direct
construction can be found in \cite{LCRHISinWal}. Other descriptions were given in
\cite{OHAGetJon, KDOGinKap}. 
\subsubsection{Cobar construction for Lie coalgebras}\label{subs:cobarcoLie}
The construction follows follows the lines of the commutative dga coalgebra case.
However the lack of associativity and the use of the symmetric algebra need to
be taken into account.

First we need a notion of conilpotency for a Lie coalgebra
$(\cL,\delta,d)$. As $\delta$ is not associative, one can not directly use an
 iterated coproduct. One introduces trivalent trees controlling this lack of
 associativity. 

A rooted trivalent tree, or simply a tree, is a planar
tree (at each internal 
vertex a cyclic 
ordering of the incident edges is given) where vertices have valency $1$
(external vertices) or
$3$ (internal vertices) together with a distinguished external vertex (the
root); other external vertices are called leaves. The leaves are numbered from
left to right begin at $1$. The trees are drawn with the root (with number $0$)
at the top. 


Let $(\cL, \delta, d)$ be a dg Lie coalgebra. Recall that $\delta$ is a dg
morphism $\delta : \cL \lra \cL \otimes \cL$. 
\begin{defn}\label{deltaT}Let $T$ be a tree with $n$ as above and let $\{e_1,
  \ldots, e_n\}$ be 
  the set of its leaves ($e_i$ is the $i$-th leaf). $T$ induces a morphism 
\[
\delta_{T} : \cL \lra \cL^{\otimes n} 
\]
as follows:
\begin{itemize}
\item if $T$ has $n=1$ leaf, $\delta_T=\id_{\cL}$;
\item if $T$ has $n=2$ leaves, then $T=
\begin{tikzpicture}[%
baseline={(current bounding box.center)},scale=0.7]
\tikzstyle{every child node}=[intvertex]
\node[roots]
(r){}
[level distance=1.5em,sibling distance=3ex]
child {node{}[level distance=1.5em]
  child{ node(1){}}
  child{ node(2){}}
};
\fill (r.center) circle (1/0.7*\lbullet) ;
\node[mathsc, xshift=-1ex] at (r.west) {};
\node[labf] at (1.south){e_1};
\node[labf,xshift=0ex] at (2.south){e_{2}};
\end{tikzpicture}$ and
  $\delta_T=\delta$; 
\item if $T$ has $n\geqs 3$ leaves, then there exists at least one leaf $e_i$ in
  a strict subtree of the form 
\[
T_{0}=\begin{tikzpicture}[%
baseline={(current bounding box.center)},scale=1]
\tikzstyle{every child node}=[intvertex]
\node[intvertex]
(r){}
[level distance=1.5em,sibling distance=3ex]
child {node{}[level distance=1.5em]
  child{ node(1){}}
  child{ node(2){}}
};
\node[mathsc, xshift=-1ex] at (r.west) {v};
\node[labf] at (1.south){e_i};
\node[labf,xshift=1ex] at (2.south){e_{i+1}};
\end{tikzpicture}
\]
where $v$ is an internal vertex of $T$. Let $T'$ be the tree $T\sm T_0$, where
the internal vertex $v$ of $T$ is the $i$-th leaf (in $T'$). The morphism
$\delta_T$ is defined by
\[
\delta_T=(\id^{\otimes (i-1)}\otimes \delta \otimes \id^{\otimes (n-i)})\circ \delta_{T'}.
\]
\end{itemize}
\end{defn}

This definition does not depend on the choice of the subtree $T_0$. By analogy with the
associative case, we define.
\begin{defn} A Lie coalgebra $(\cL, \delta, d)$ is \emph{conilpotent} if for any $x
  \in \cL$ there exist $n$ big enough such that for any tree $T$ with $k$
  leaves, $k\geqs n$, $\delta_T(x)=0$.
\end{defn}

We now fix a conilpotent dg Lie coalgebra $(\cL,\delta, d_{\cL})$. Its cobar
construction is given by twisting the differential of the free commutative dga
$S^{gr}(s \cL)$.

The differential $d_{\cL}$ induces a differential $d_{s\cL}$ on $s\cL$ and thus
on $(s\cL)^{\otimes n}$ given by
\[
\sum_{i=1}^n id^{\otimes (i-1)}\otimes d_{s\cL}\otimes \id^{\otimes (n-i)} :
(s\cL)^{\otimes n} \lra (s\cL)^{\otimes n}.
\]
This differential goes down to a differential on $n$-th symmetric power of $s\cL$:
\[
D_1 : S^{gr,n}(s\cL) \lra S^{gr, n}(s\cL).
\] 

Using the map $\Delta_s$ and the cobracket $\delta_{\cL}$, one has a morphism:
\[
g_{L}:
s\cL \xrightarrow{\Delta_s\otimes \cL} s\Q \otimes s\Q \otimes \cL \otimes \cL
\xrightarrow{\id \otimes \tau \otimes id} s\Q \otimes \cL \otimes s \Q \otimes \cL.
\]
which induces a degree $1$ map $\delta^s : s\cL \lra S^{2,gr}(s\cL)$ because of
the relation $ \tau \circ \delta = -\delta $ and the sift in the degree.


The relation $(\id + \xi + \xi^2)\circ (\delta \otimes \id) \circ \delta  =0$,
combined with the shift in the degree and $\Delta_s$, shows that  $g_L$ induce a
differential $D_2$ on 
$S^{gr}(s\cL)$ given  by the formula:
\[
D_2|_{S^{gr,n}(s\cL)}=\sum_{i=1}^{n}\id^{\otimes(i-1)}\otimes g_L \otimes \id^{\otimes(n-i)}.
\] 
This is the
classical duality between Jacobi identity and $D^2=0$ for a classical Lie
coalgebra (that is with a dg structure concentrated in degree $0$).

The differential $D_1$ and $D_2$ commute (in the graded sens), that is 
\[
D_1 \circ D_2 + D_2 \circ D_1 =0
\]
 and one obtains the following.
\begin{lem}\label{cobarcoLie}The complex $\Omega_{coL}(\cL) = (S^{gr}
  (s\cL),d_{\Omega, coL})$
with the symmetric concatenation product is an augmented  commutative dga
algebra called the cobar-coLie  construction of $\cL$.
\end{lem}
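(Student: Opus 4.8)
The plan is to check the defining properties of an augmented commutative dga algebra for $\Omega_{coL}(\cL) = (S^{gr}(s\cL), d_{\Omega, coL})$, assembling the ingredients prepared immediately above the statement. Concretely I must verify that (i) the underlying graded algebra is graded commutative and associative with a compatible augmentation, (ii) the map $d_{\Omega, coL} = D_1 + D_2$ is a degree $1$ derivation for the symmetric concatenation product, and (iii) it squares to zero. For (i) there is essentially nothing to do: $S^{gr}(s\cL) = \bigoplus_n S^{gr,n}(s\cL)$ is by definition the free graded commutative algebra over the dg vector space $s\cL$, so associativity and graded commutativity of the product hold by construction, and the augmentation $\ve : \Omega_{coL}(\cL) \lra \Q$ is the projection onto $S^{gr,0}(s\cL) = \Q$ with unit the corresponding inclusion.

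For (ii) I would observe that both summands of the differential are derivations by their very definition. The map $D_1$ is the descent to symmetric powers of the standard derivation $\sum_{i} \id^{\otimes(i-1)} \otimes d_{s\cL} \otimes \id^{\otimes(n-i)}$ extending $d_{s\cL}$, hence a derivation; and $D_2$ is defined on $S^{gr,n}(s\cL)$ by $\sum_{i=1}^n \id^{\otimes(i-1)} \otimes g_L \otimes \id^{\otimes(n-i)}$, which is exactly the unique derivation extending the cobracket-like map $g_L$ (equivalently $\delta^s : s\cL \lra S^{gr,2}(s\cL)$). Since a sum of derivations is a derivation, $d_{\Omega, coL}$ is a degree $1$ derivation and the product is a map of dg vector spaces. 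Here one also uses, as recorded above the statement, that $D_1$ and $D_2$ descend to the symmetric quotient; this is where the symmetry of $\delta^s$ coming from $\tau \circ \delta = -\delta$ together with the shift intervenes.

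For (iii) I would expand $d_{\Omega, coL}^2 = D_1^2 + (D_1 D_2 + D_2 D_1) + D_2^2$ and treat the three terms separately. The term $D_1^2$ vanishes because $d_{s\cL}^2 = 0$, itself a consequence of $d_{\cL}^2 = 0$ under the identification $i_s$ for which $d_{s\cL} = -d_{\cL}$. The cross term $D_1 D_2 + D_2 D_1$ is zero by the graded commutation already established just before the statement, which encodes that $\delta$ is a morphism of dg vector spaces. The last term $D_2^2 = 0$ is the translation of the co-Jacobi relation $(\id + \xi + \xi^2) \circ (\delta \otimes \id) \circ \delta = 0$: applying $g_L$ twice lands in $S^{gr,3}(s\cL)$, and after inserting the signs produced by $\Delta_s(s) = -s \otimes s$ and by the suspension, the three-term cyclic sum descends to $0$ on the symmetric algebra.

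The hard part will be the sign bookkeeping behind $D_2^2 = 0$: one must track precisely how the odd degree of the suspension variable $s$ and the sign in $\Delta_s$ convert the genuinely \emph{antisymmetric} co-Jacobi identity for $\delta$ into an identity that vanishes after descending to the \emph{symmetric} power $S^{gr,3}(s\cL)$. As this is exactly the classical duality between the Jacobi identity and $D^2 = 0$ alluded to in the construction of $D_2$, I would either invoke that computation directly or carry it out once on the three incident edges of a trivalent vertex; once this is in hand the remainder of the proof is a direct assembly of (i)--(iii).
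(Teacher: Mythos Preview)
Your proposal is correct and follows exactly the approach of the paper: the paper does not give a separate proof of this lemma but rather states it as a direct consequence of the ingredients assembled just before (the descent of $D_1$ and $D_2$ to the symmetric algebra, $D_2^2=0$ from co-Jacobi, and the graded commutation $D_1D_2+D_2D_1=0$). Your writeup simply makes explicit the verification the paper leaves to the reader.
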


\subsubsection{Bar construction for commutative dga algebras}\label{subsub:barcdga}
Let $(A, d_A , \mu_A , \ve_A )$ be an augmented commutative dga algebra and
$\bar A=\ker(\ve_A)$. One can consider its bar construction $B(A)$ as
associative algebra. One defines on the coalgebra $B(A)$ an associative
 product $\sha$ by the formula
\begin{align*}
[x_1|\cdots | x_n]\sha [x_{n+1}|\cdots|x_{n+m}]&= 
\sum_{\sigma \in \sh(n,m)}\rho_{S}(\sigma)([x_1|\cdots | x_{n}|x_{n+1}|\cdots
|x_{n+m}]) \\
&=\sum_{\sigma \in \sh(n,m)}\ve^{gr}(\sigma)([x_{\sigma^{-1}(1)}|\cdots |
x_{\sigma^{-1}(n+m)}]) 
\end{align*} 
where $\sh(n,m)$ denotes the subset $\mbb S_{n+m}$ preserving the order of the ordered sets 
$\{1, \ldots, n\}$ and $\{n+1,\ldots, n+m\}$. For grading reasons, and thus for
signs issues, it is important to note that, in the above formula,
the $x_i$'s are elements of $s^{-1}A$.

A direct computation shows that $\sha$ turns $B(A)$ into an augmented commutative
dga Hopf algebra; that is it respects the expected diagrams for a Hopf algebra,
all the morphisms involved being dg-morphisms. 

The augmentation of $B(A)$ is the projection onto the tensor degree $0$ part. 
Let $\ol{B(A)}$ be the kernel of the augmentation of $B(A)$ and  
\[
Q_B(A)=\ol{B(A)}/({\ol{(B(A)})}^2
\]
be the set of its indecomposable elements.  Ree's
theorem \cite[Theorem 1.3.9]{LodValAO} (originally in
\cite{ReeLEAS}) 
shows the following
\begin{lem}The
differential $d_B$ induces a differential $d_Q$ on $Q_B(A)$.
The reduced coproduct $\Delta'$
induces a cobracket $\delta_Q=1/2(\bar \Delta - \tau \bar \Delta)$
 on $Q_B(A)$ making it into a conilpotent Lie dg coalgebra. 

The complex $(Q_B(A), d_Q)$ endowed with the cobracket $\delta_Q$ is the
\emph{commutative bar construction} of $A$ and denoted $B_{com}(A)$.
\end{lem}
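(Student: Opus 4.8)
The plan is to reduce the statement to the general principle that the indecomposables of a connected commutative dg Hopf algebra carry a natural conilpotent dg Lie coalgebra structure, and then apply it to $H=B(A)$, which is such a Hopf algebra by the direct computation recalled above (product $\sha$, deconcatenation coproduct $\Delta$). Four points must be checked: that $d_B$ descends to $Q_B(A)$; that the antisymmetrized reduced coproduct descends; that the induced cobracket satisfies antisymmetry, co-Jacobi and dg-compatibility; and that $Q_B(A)$ is conilpotent. Ree's theorem is exactly the input that reorganizes the reduced coproduct into a Lie-coalgebra cobracket on the indecomposables.

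First I would treat the differential. Since $\sha$ makes $B(A)$ a dg Hopf algebra, $d_B$ is a derivation for $\sha$, and the only thing to verify is that it preserves the augmentation ideal $\overline{B(A)}$: the component $D_1$ preserves tensor degree, while $D_2$ lowers it by one but vanishes on tensor degree $1$ (its formula needs at least two bar factors), so no element of positive tensor degree is sent into $V^{\otimes 0}=\Q$. A derivation preserving $\overline{B(A)}$ preserves $\overline{B(A)}^2$; hence $d_B$ descends to a degree $1$ map $d_Q$ on $Q_B(A)$, and $d_Q^2=0$ is inherited from $d_B^2=0$.

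The heart of the argument is the cobracket. For $x,y\in\overline{B(A)}$ I would compute $\bar\Delta(x\sha y)$ modulo decomposables. Writing $\Delta(x)=x\otimes 1+1\otimes x+\bar\Delta(x)$ and using the Hopf compatibility $\Delta(x\sha y)=\Delta(x)\bullet\Delta(y)$ for the graded tensor-product algebra $\bullet$ on $B(A)\otimes B(A)$, every term that carries a factor coming from $\bar\Delta(x)$ or $\bar\Delta(y)$ lands in the decomposable subspace $\overline{B(A)}^2\otimes\overline{B(A)}+\overline{B(A)}\otimes\overline{B(A)}^2$ and dies in $Q_B(A)\otimes Q_B(A)$; the surviving cross terms give
\[
\bar\Delta(x\sha y)\equiv x\otimes y+(-1)^{|x||y|}\,y\otimes x.
\]
The right-hand side equals $(\id+\tau)(x\otimes y)$, hence is $\tau$-symmetric, so the antisymmetrization $\tfrac12(\bar\Delta-\tau\bar\Delta)$ annihilates $\overline{B(A)}^2$ and descends to a well-defined map $\delta_Q:Q_B(A)\to Q_B(A)\otimes Q_B(A)$. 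By construction $\tau\circ\delta_Q=-\delta_Q$, so $\delta_Q$ factors through $\Lambda^{gr,2}(Q_B(A))$; this is precisely Ree's theorem in the present setting.

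It remains to verify the remaining axioms and conilpotency. Compatibility $d_Q\circ\delta_Q=\delta_Q\circ d_Q$ is immediate, since $\bar\Delta$ commutes with $d_B$ and both the antisymmetrization and the quotient are dg. The co-Jacobi relation $(\id+\xi+\xi^2)\circ(\delta_Q\otimes\id)\circ\delta_Q=0$ I would deduce from the coassociativity of $\bar\Delta$: it is the exact dual of the elementary fact that associativity of a product forces the Jacobi identity for its commutator, and this is again what Ree's theorem packages. Finally, conilpotency transfers from the tensor coalgebra $B(A)=T^c(s^{-1}\bar A)$, which is conilpotent, so $\bar\Delta^{\,k}(x)=0$ once $k$ exceeds the tensor degree of a representative of $x$; since every tree-cobracket $\delta_T$ with $k$ leaves is, after projection to $Q_B(A)^{\otimes k}$, a signed combination of components of $\bar\Delta^{\,k-1}$, it vanishes for $k$ large, giving conilpotency of $(Q_B(A),\delta_Q)$. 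The main obstacle is the sign bookkeeping in the cross-term computation above, where the Koszul rule coming from the suspension shift $s^{-1}$ must be tracked carefully to see that the surviving terms assemble into the $\tau$-symmetric expression.
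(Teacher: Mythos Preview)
Your proposal is correct and follows the same approach as the paper: the paper simply invokes Ree's theorem (citing \cite[Theorem 1.3.9]{LodValAO}) for the entire lemma, while you unpack precisely the steps that Ree's theorem encodes---the Hopf compatibility computation showing $\bar\Delta(x\sha y)\equiv(\id+\tau)(x\otimes y)$ modulo decomposables, the descent of $d_B$ and of the antisymmetrized coproduct, co-Jacobi from coassociativity, and conilpotency from that of $T^c(s^{-1}\bar A)$. One minor point: your dg-compatibility should read $d_{Q\otimes Q}\circ\delta_Q=\delta_Q\circ d_Q$ rather than $d_Q\circ\delta_Q=\delta_Q\circ d_Q$, but this is purely notational.
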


Working with the indecomposable elements as a quotient may be complicated. In
particular, some structure, say 
for example extra filtrations,  may not behave well by taking a
quotient. For this purpose,  R. Hain, dealing with Hodge structure problems,
gave in \cite{HainIEBC} a splitting 
\[
i_Q : Q_B(A) \lra \ol{B(A)}
\] of the projection $p_Q : \ol{B(A)} \lra Q_B(A)$ commuting with the
differential. The projector of $\ol{B(A)}$ given 
by the composition $\psh=i_Q \circ p_Q$ can be express using the following
explicit formula given in \cite{HainIEBC}:
\[
\psh([a_1|\cdots|a_n])=\sum_{i=1}^n \frac{(-1)^{i-1}}{i} \sha \circ
(\bar \Delta)^{i-1} 
([a_1|\cdots | a_n])
\]
where the associative product $\sha$ has been extend to $B(A)^{\otimes n}$ for
all $n\geqs 2$ and where $\bar{\Delta}^{(0)}=\id$.
\subsubsection{Adjunction}  As in the case of associative algebras and
coalgebras, the functors $\Omega_{coL}$ and $B_{com}$ are adjoint.
\begin{thm}
For any augmented commutative dga algebra $A$ and any conilpotent dg Lie
coalgebra $\cL$ there exist natural bijections
\[
\Hom_{\text{\rm com dga alg}}
(\Omega_{coL}(\cL), A)
\simeq \Hom_{\text{\rm conil dg coLie}}
(\cL, B_{com}(A)) .
\]


The unit $\upsilon : \cL \lra B_{com} \circ \Omega_{coL}(\cL)$ and the  counit $\epsilon :
\Omega_{coL} \circ B_{com}(A) \lra A$ are quasi-isomorphisms of conilpotent dg
co Lie algebras and commutative dga algebras respectively. 
\end{thm}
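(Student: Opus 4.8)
The plan is to prove the adjunction exactly as in the associative case, by inserting a set of \emph{twisting morphisms} $\on{Tw}(\cL,A)$ between the two $\Hom$-sets and establishing a separate bijection with each of them. First I would equip the graded space $\Hom(\cL,A)$ with its convolution bracket
\[
[\alpha,\beta]=\mu_A\circ(\alpha\otimes\beta)\circ\delta_{\cL},
\]
and check it is a dg Lie bracket: graded antisymmetry comes from the antisymmetry $\tau\circ\delta=-\delta$ of the cobracket together with the commutativity $\mu_A\circ\tau=\mu_A$ of the product, while the Jacobi identity is forced by the coLie relation $(\id+\xi+\xi^2)\circ(\delta\otimes\id)\circ\delta=0$ dualized against associativity and commutativity of $\mu_A$. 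A twisting morphism is then a degree $1$ element $\alpha\in\Hom(\cL,A)$, valued in the augmentation ideal $\bar A$, solving the Maurer--Cartan equation $d_{\Hom}(\alpha)+\frac12[\alpha,\alpha]=0$, and $\on{Tw}(\cL,A)$ denotes the set of such $\alpha$.

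Next I would produce the two bijections using freeness and cofreeness. On the cobar side, $\Omega_{coL}(\cL)=S^{gr}(s\cL)$ is the \emph{free} commutative dga algebra on $s\cL$, so a morphism of commutative algebras $\Omega_{coL}(\cL)\to A$ is the same datum as a degree $0$ linear map $s\cL\to A$, i.e. after desuspension a degree $1$ map $\alpha:\cL\to\bar A$; unwinding $d_{\Omega,coL}=D_1+D_2$ shows that such a map commutes with the differentials precisely when the $D_1$-part reproduces $d_{\Hom}(\alpha)$ and the $D_2$-part, built from $\delta$ and $\Delta_s$, reproduces $\frac12[\alpha,\alpha]$ --- that is, exactly when $\alpha\in\on{Tw}(\cL,A)$. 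On the bar side, I would use that as a graded conilpotent coLie coalgebra $B_{com}(A)=Q_B(A)$ is the \emph{cofree} conilpotent coLie coalgebra on $s^{-1}\bar A$: this is the dual of Ree's theorem (already invoked for the coalgebra structure) identifying the indecomposables of the shuffle algebra $B(A)$ with the cofree coLie coalgebra, the universal cogenerator projection being $\pi_1$. Hence a morphism of conilpotent coLie coalgebras $F:\cL\to B_{com}(A)$ is uniquely determined by $\pi_1\circ F:\cL\to s^{-1}\bar A$, i.e. again by a degree $1$ map $\alpha:\cL\to\bar A$, and compatibility with the differentials is once more the Maurer--Cartan equation. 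Composing the two identifications yields the natural bijection of the statement, naturality in $A$ and in $\cL$ being immediate from the functoriality of the cogenerator projection and of restriction to generators.

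It then remains to see that the unit $\upsilon$ and counit $\epsilon$ are quasi-isomorphisms. I would filter $\Omega_{coL}\circ B_{com}(A)$ by the total bar length of the $\bar A$-factors; the piece of the differential induced by $\mu_A$ strictly lowers this length, so on the associated graded the complex reduces to the Koszul complex of the pair $(Com,Lie)$ built on $\bar A$, whose acyclicity in positive weight is precisely the Koszul-duality statement for commutative algebras and Lie coalgebras (as in Getzler--Jones and Ginzburg--Kapranov). The comparison lemma of Loday--Vallette then upgrades acyclicity of the associated graded to the statement that $\epsilon$ is a quasi-isomorphism, and dually for $\upsilon$. \emph{The main obstacle} is exactly this last step: the bijective content is essentially bookkeeping of suspensions and Koszul signs, but the quasi-isomorphism property rests on acyclicity of the $Com/Lie$ Koszul complex, where the real input lies. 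An alternative, more in the spirit of this paper, is to transport the already-established associative bar--cobar quasi-isomorphisms through Hain's projector $\psh$, which realizes $B_{com}(A)=Q_B(A)$ as a differential retract of $B(A)$; the subtlety there is to promote this retraction of complexes to a compatible comparison of the cobar constructions $\Omega_{coL}B_{com}(A)$ and $\Omega B(A)$.
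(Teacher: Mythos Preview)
The paper does not supply a proof of this theorem: it is stated as a known result, with pointers to the literature (Getzler--Jones \cite{OHAGetJon}, Ginzburg--Kapranov \cite{KDOGinKap}, Loday--Vallette \cite{LodValAO}, Sinha--Walter \cite{LCRHISinWal}, Milles \cite{MillesKCCTC}). Your sketch is precisely the standard operadic argument one finds in those references, transposed to the $Com$/$coLie$ setting: insert the set of Maurer--Cartan elements in the convolution Lie algebra $\Hom(\cL,A)$ between the two $\Hom$-sets, identify each side with $\on{Tw}(\cL,A)$ using (co)freeness, and then deduce the quasi-isomorphism statements from Koszul acyclicity of the $Com$/$Lie$ pair via a filtration argument. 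So there is nothing to compare against --- your outline \emph{is} the proof the paper is citing.

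One point deserves a word of care. You assert that $B_{com}(A)=Q_B(A)$ is the cofree conilpotent coLie coalgebra on $s^{-1}\bar A$, invoking ``the dual of Ree's theorem''. This is correct, but it is not quite immediate from the paper's definition of $B_{com}(A)$ as the \emph{quotient} $\ol{B(A)}/\ol{B(A)}^2$: one really needs the Eulerian idempotent (Hain's $\psh$) to identify that quotient with a subcomplex of $T^c(s^{-1}\bar A)$, and then to recognise this subcomplex as the cofree conilpotent Lie coalgebra cogenerated by $s^{-1}\bar A$ (this is where characteristic zero is used). The paper alludes to this via Hain's splitting $i_Q$, and a direct construction is in \cite{LCRHISinWal}; your parenthetical about $\pi_1$ being the universal cogenerator projection is the right hook, but in a fully written-out proof this identification carries most of the weight on the bar side.
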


Note that the following diagram of dg vector spaces is commutative:
\[
\begin{tikzpicture}
\matrix (m) [matrix of math nodes,
 row sep=3em, column sep=10em]
{\cL  & \Omega_{coL}(\cL)=\left(S^{gr}(s\cL), d_{\Omega,coL}\right)  \\
 & B_{com}(\Omega_{coL}(\cL))=Q_B\left(S^{gr}(s\cL), d_{\Omega,coL}\right) \\
};
\node[xshift=-5em] (u) at (m-2-2.north) {};
\node[xshift=-5em] (v) at (m-1-2.south) {};
\path[->,font=\scriptsize]
(m-1-1) edge node[auto] {$\upsilon$} (m-2-2.west)
(m-1-1) edge node[below,xshift=-0.5ex] {} (m-1-2)
(u) edge node[auto] {$\pi_1$} (v);
\end{tikzpicture}
\]
where $\pi_1$ is the projection onto the tensor degree $1$ part restricted to
the set of 
indecomposable elements; that is restricted to the image of $\psh$. 
\subsection{An explicit map}
We present here an explicit
description for the unit $\upsilon : \cL \lra B_{com} \circ \Omega_{coL}(\cL)$
with image in $\TC(S^{gr}(s\cL))$ using the projector $\psh$ onto the
indecomposable elements.

\begin{defn}Let $n$ be a positive integer. \begin{itemize}
\item The Catalan number $C(n-1)$ gives the number of rooted trivalent trees
  with $n$ leaves.
\item Let $T$ be such a rooted trivalent tree with $n$ leaves. Define
  $\tilde{\delta}_T$ by 
\[
\tilde{\delta}_T=\frac{1}{2^n}\delta_T
\]
where the map $\delta_T : \cL \lra \cL^{\otimes n}$ was introduced at Definition
\ref{deltaT}. 
\item The morphism $\tilde{\delta}_n$ is up to a normalizing coefficient the
  sums for all trees $T$ with $n$ 
  leaves of the morphism $\tilde{\Delta}_T$:
\[
\tilde{\Delta}_n=\sum_{T }
\frac{1}{nC(n-1)}\tilde{\Delta}_T.
\]
where the sums runs through all trivalent tree with $|T|=n$ leaves. 
\end{itemize}
\end{defn}

Note that using the identification $\cL \simeq s^{-1}\otimes s \otimes \cL$
given by $x \mapsto s^{-1}\otimes s \otimes x$, the
morphism $\tilde{\Delta}_n$ induces a morphism 
\[
\cL \lra  \left(s^{-1}\otimes s \otimes
\cL\right)^{\otimes n}
\]
again denoted by $\tilde{\Delta}_n$.
\begin{claim} The composition
\[ 
\phi_{\cL} =\psh \circ \left(\sum_{n\geqs 1} \tilde{\Delta_n}\right)
\]
gives a morphism 
\[
\phi_{\cL} : \cL \lra  B_{com} \circ \Omega_{coL}(\cL)=Q_B\left(S^{gr}(s\cL)\right)
\]
which is equal to the unit of the adjunction:
\[
\phi_{\cL}=\upsilon.
\]
In the above identification $
\Omega_{coL}(\cL)=Q_B\left(S^{gr}(s\cL)\right)$, the differential on
$\left(S^{gr}(s\cL)\right)$ is the bar differential $d_{\Omega, coL})$.
\end{claim}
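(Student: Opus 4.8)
The plan is to identify $\phi_{\cL}$ with $\upsilon$ by means of the universal property of the bar construction, rather than by directly inverting the adjunction. Recall that $\upsilon$ is the component at $\cL$ of the unit of the adjunction $\Omega_{coL}\dashv B_{com}$, hence under the adjunction bijection it corresponds to the universal (identity) twisting morphism $\iota:\cL\lra\Omega_{coL}(\cL)=S^{gr}(s\cL)$, namely the canonical corestriction onto the cogenerators $s\cL\subset S^{gr}(s\cL)$. Since this bijection is realized by composing with the projection $\pi_1$ onto the cogenerators (equivalently, by passing to the associated twisting morphism in $\on{Tw}(\cL,\Omega_{coL}(\cL))$), the corestriction map $g\mapsto\pi_1\circ g$ is injective on morphisms of conilpotent dg Lie coalgebras into $B_{com}\Omega_{coL}(\cL)$. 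Therefore it suffices to prove two things: (a) that $\phi_{\cL}$ is a morphism of conilpotent dg Lie coalgebras, and (b) that $\pi_1\circ\phi_{\cL}=\pi_1\circ\upsilon=\iota$. Granting both, the equality $\phi_{\cL}=\upsilon$ follows from injectivity of the corestriction.

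Step (b) is the elementary one. Each $\tilde\Delta_n$ takes values in tensor degree $n$ of the bar construction, because $\delta_T:\cL\lra\cL^{\otimes n}$ followed by the identification $\cL\simeq s^{-1}\ot s\ot\cL$ places one cogenerator $s\cL$ in each of the $n$ tensor slots. Moreover Hain's projector $\psh$ preserves the tensor degree, since every term of $\sha\circ(\bar{\Delta})^{i-1}$ is a shuffle of a fixed total number of factors; hence $\pi_1\circ\psh=\pi_1$. Consequently $\pi_1\circ\phi_{\cL}$ retains only the $n=1$ summand $\tilde\Delta_1$, built from the unique one-leaf tree with $\delta_T=\id_{\cL}$. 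A direct check in tensor degree one, accounting for the suspension identifications and the constants $1/2^n$ and $1/(nC(n-1))$, shows that this summand reproduces the universal twisting morphism $\iota$, i.e. the canonical inclusion $\cL\hookrightarrow s\cL$; this is exactly $\pi_1\circ\upsilon$ read off from the commutative triangle preceding the statement.

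Step (a) is the core of the argument and the main obstacle. One must verify that $\phi_{\cL}$ intertwines both the cobrackets and the differentials. For the cobracket one computes $\delta_Q\circ\phi_{\cL}$ and $(\phi_{\cL}\ot\phi_{\cL})\circ\delta$ and compares. The cobracket $\delta_Q=\tfrac12(\bar{\Delta}-\tau\bar{\Delta})$ acts on a tree summand by the antisymmetrized deconcatenation of its $n$ slots; cutting the slot-sequence into an initial and a final block corresponds, on the tree side, precisely to severing $T$ at an internal edge into two trivalent subtrees, which is exactly how Definition \ref{deltaT} assembles $\delta_T$ from a single $\delta$ applied after a smaller $\delta_{T'}$. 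Matching the two sides then rests on a purely combinatorial identity: summing over all trivalent trees with $n$ leaves and all admissible cuts, weighted by $1/(nC(n-1))$ and $1/2^n$, must reproduce the single cobracket $\delta$ applied after the normalized lower tree sums. Here the Catalan recursion $C(n-1)=\sum_{k}C(k-1)C(n-k-1)$ and the factor $1/2^n$ (one half per slot, balancing the $\tfrac12$ in $\delta_Q$) are exactly what make the weights telescope.

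The genuine difficulty is that this bookkeeping must be carried out \emph{after} applying $\psh$, which does not commute with the tree decomposition in any naive way; one has to use that $\psh$ is a projector onto the indecomposables compatible with $d_B$ (Ree's theorem) and that $\delta_Q$ is induced from $\bar{\Delta}$ on the quotient, so the comparison may be performed modulo decomposables. Compatibility with the differentials is handled in the same spirit: each $\delta_T$ is a chain map because $\delta$ is a dg morphism, matching the internal part of $d_Q$ with $D_1$, while the part of $d_Q$ descending from the cobar differential $d_{\Omega,coL}$ on $S^{gr}(s\cL)$ matches the cobracket-refinement of the trees. This last matching is governed by the same relation $(\id+\xi+\xi^2)\circ(\delta\ot\id)\circ\delta=0$ used in Lemma \ref{cobarcoLie} to define $D_2$ on the cobar side, i.e. it is the Maurer--Cartan (twisting) equation for $\iota$. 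Once (a) and (b) are established, injectivity of the corestriction yields $\phi_{\cL}=\upsilon$.
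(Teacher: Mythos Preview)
The paper does not actually prove this Claim: immediately after stating it, the text only says that ``this formula can be derived as a consequence of the work of Getzler and Jones \cite{OHAGetJon} or of the work of Sinha and Walter \cite{SinWalLCRHTI}.'' So there is no argument in the paper to compare your proposal against; you are in effect supplying what the paper outsourced to the literature.

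Your strategy is the right one and is exactly how the operadic references organize the proof: $B_{com}(A)$ is the cofree conilpotent Lie coalgebra on $s^{-1}\bar A$ (with twisted differential), so a dg coLie morphism into it is determined by its corestriction $\pi_1$, and checking $\pi_1\circ\phi_{\cL}=\pi_1\circ\upsilon$ together with ``$\phi_{\cL}$ is a dg coLie morphism'' forces $\phi_{\cL}=\upsilon$. Two points, however, keep your write-up at the level of a sketch rather than a proof.

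\textbf{Step (a) is not carried out.} You correctly identify that the comparison of $\delta_Q\circ\phi_{\cL}$ with $(\phi_{\cL}\w\phi_{\cL})\circ\delta$ reduces to a tree-combinatorial identity (cuts of trivalent trees versus the Catalan recursion, with the weights $1/(nC(n-1))$ and $1/2^n$), and you correctly note that $\psh$ intervenes nontrivially. But you do not actually verify the identity, nor do you explain precisely how working modulo decomposables lets you bypass $\psh$. This is the substantive content of the Claim, and it is where the references you would be reconstructing (Getzler--Jones, Sinha--Walter) do real work; as written, your Step~(a) is a plan, not an argument.

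\textbf{Step (b) hides a normalization issue.} With the paper's definitions, for $n=1$ one has $C(0)=1$, $\delta_T=\id$, hence $\tilde\delta_T=\tfrac12\id$ and $\tilde\Delta_1=\tfrac12\id$, not $\id$. Your sentence ``a direct check\dots shows that this summand reproduces the universal twisting morphism $\iota$'' sweeps this under the rug. Either the paper's constants are off by a global factor (which would not affect the rest of the paper, since only the \emph{existence} of the unit is used downstream), or there is a compensating convention in the suspension identifications that you have not spelled out. Either way, you should make this explicit rather than assert it.
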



The idea for this formula comes from mimicking the associative case where the
unit morphism being a coalgebra morphisms has to be compatible with the iterated
reduced coproduct.
  This formula can be derived as a consequence
  of the work of Getzler and Jones \cite{OHAGetJon}  or of the work of Sinha and
  Walter \cite{SinWalLCRHTI}.
\section{Families of bar elements}\label{sec:baralgcycle}
In \cite{SouMPCC}, the author defined a family of algebraic cycles $\mc
L^{\ve}_W$ indexed by couple $(W, \ve)$ where $W$ is a Lyndon word and $\ve$ is in
$\{0 ,1\}$. Note that, all words considered in this work are words in
the two letters $0$ and $1$.

One of the idea underling the construction of the cycles was to
follow explicitly a $1$-minimal model construction described in \cite{DGMS} with the
hope  to use the relation between $1$-minimal model and bar construction in 
order to obtain motives over $\ps$ in the sense of Bloch and Kriz \cite{BKMTM}. 

The construction of the family of cycles provides in fact a differential system
for these cycles related to the action of the free Lie algebra $\Lie(X_0,X_1)$ on
itself by Ihara's special derivations. In this subsection we will associated
bar elements to the previously defined algebraic cycles using the unit of the
bar/cobar adjunction in the commutative algebra/Lie coalgebra case.

Before dealing with the algebraic cycles situations, we need to recall the
combinatorial situation from \cite{SouMPCC} and its relation with Ihara action. 
This is need to the related the Lie coalgebra situation (dual to Ihara action)
with  the differential system for algebraic cycles.


\subsection{Lie algebra, special derivations and Lyndon words}\label{subsec:Lie}

We present here the Lyndon brackets basis for the free Lie algebra
$\Lie(X_0,X_1)$ and its action on itself by special derivations. The Lie bracket
of  $\Lie(X_0,X_1)$ is denoted by $[\, ,\,]$  as usual.

A Lyndon word in $0$ and $1$ is a word in $0$ and $1$ strictly smaller than any
of its nonempty proper right factors for the lexicographic order with $0<1$ (for
more details, see \cite{ReuFLA93}). 

The standard factorization $[W]$ of a Lyndon word $W$ is defined inductively by
$[0]=X_0$, $[1]=X_1$ and otherwise by  $[W]=[[U],[V]]$ with $W=UV$, $U$ and $V$ nontrivial
and such that $V$ is minimal. 
\begin{exm}Lyndon words in letters $0<1$ in lexicographic order are up to weight
  $4$: 
\[
0<0001<001<0011 <01<011<0111<1
\]
Their standard factorization is given in weight $1$ $2$ and $3$ by
\[
[0]=X_0, \quad [1]=X_1, \quad [01]=[X_0,X_1], \quad [001]=[X_0,[X_0,X_1]] 
\mx{and} [[X_0,X_1],X_1].
\]
In weight $4$, one has
\[
[0001]=[X_0,[X_0,[X_0,X_1]]],
\quad \mx{and } \quad
[0111]=[[[X_0,X_1],X_1],X_1]
\]
and 
\[
[0011]=[X_0,[[X_0,X_1],X_1]]
\]
\end{exm}

The sets of Lyndon brackets $\{[W]\}$, that is Lyndon
words in standard factorization, form a basis of $\Lie(X_0,X_1)$ (\cite[Theorem
5.1]{ReuFLA93}). This basis  can then 
be used to write the Lie bracket:
\begin{defn}\label{defn:alpha} 
For any Lyndon word $W$, the coefficients $\alpha_{U,V}^W$ (with
  $U<V$ Lyndon words) are defined by:  
\[
[[U],[V]]=
\sum_{\substack{W\, \mx{{\scriptsize Lyndon}} \\ \mx{{\scriptsize words}} }}
\alpha_{U,V}^W[W].
\]
with $U<V$ Lyndon words. 
The $\alpha$'s are the structure coefficients of $\Lie(X_0,X_1)$.
\end{defn}

A derivation of $\Lie(X_0,X_1)$ is a linear endomorphism satisfying
\[
D([f,g])=[D(f),g]+[f,D(g)] \qquad \forall f,g \in \Lie(X_0,X_1). 
\]
\begin{defn}[Special derivation, \cite{IharaAPBG,IharaSDABG}] For any $f$ in
  $\Lie(X_0,X_1)$ we define a derivation $D_f$ by:
\[
D_f(X_0)=0, \qquad D_f(X_1)=[X_1,f].
\]
Ihara bracket on $\Lie(X_0,X_1)$ is given by
\[
\{f,g\}=[f,g]+D_f(g)-D_g(f).
\]
\end{defn}
Ihara bracket is simply the bracket of derivation 
\[
[D_1,D_2]_{Der}=D_1\circ D_2 - D_2 \circ D_1
\]
restricted to special derivations : 
\[
[D_f,D_g]_{Der}=D_h, \qquad \mx{with } h=\{f,g\}.
\]


Let $\TL[1]$ and $\TL[x]$ be two copies of the vector space $\Lie(X_0,X_1)$. The
subscript $x$ denotes a formal variable but it can be think as a point $x$ in
$\A^1$. $\TL[x]$ is endowed with the free bracket $[\, , \,]$ of $\Lie[X_0,X_1]$
while $\TL[1]$ is endowed with Ihara bracket $\{\,,\,\}$. The Lie algebra $\TL[1]$
acts on $\TL[x]$ by special derivations ; which act on $X_1$ hence the
subscript. 
If $f$ is an element of $\Lie(X_0,X_1)$, we write $f(1)$ its image in $\TL[1]$
and $f(x)$ its image in $\TL[x]$.

\begin{defn}[{\cite{GOVFLTLTG}}]
The semi-direct sum $\TL[1;x]$ of $L_x$ by $L_1$ is as a vector spaces the direct
sum
\[
\TL[1;x]=\TL[x]\oplus \TL[1]
\]
with bracket $\{\,,\,\}_{1;x}$ given by $[\, , \,]$ on $\TL[x]$, by $\{\,,\,\}$
on $\TL[1]$ and by 
\[
\{g(1),f(x)\}_{1;x}=-\{f(x),g(1)\}_{1;x}=D_g(f)(x) \qquad \forall f,g \in \Lie(X_0,X_1) 
\] 
on cross-terms.
\end{defn}

The union of $\{[W](x), [W](1)\}$ for all Lyndon words gives a basis of
$\TL[1;x]$ while a   basis of $\TL[1;x] \w \TL[1;x]$ is given by the union of
the following families
\begin{align*}
[U](x)\w[V](x) & \mx{ for any Lyndon word } U<V\\
[U](x)\w[V](1)  & \mx{ for any Lyndon word } U\neq V\\
[U](1)\w[V](1) & \mx{ for any Lyndon word } U<V. 
\end{align*}

\begin{defn}The structure coefficients $\alpha_{U,V}^W$,
  $\beta_{U,V}^W$ and $\gamma_{U,V}^W$ of
  $\TL[1;x]$ are given for any Lyndon words $W$ by the
  family of relations
\begin{equation}
\begin{aligned}\label{eqdef:alphabeta}
  \{[{U}](x) ,[ {V}](x) \}_{1;x}&=\sum_{W \in
  Lyn}\alpha_{U,V}^W [ {W}](x) . & \mx{for any Lyndon word } U<V\\
\{[{U}](x) ,[ {V}](1) \}_{1;x}&=\sum_{W \in
  Lyn}\beta_{U,V}^W [ {W}](x)   & \mx{for any Lyndon word } U\neq V\\
\{[ {U}](1) ,[ {V}](1) \}_{1;x}&=\sum_{W \in
  Lyn}\gamma_{U,V}^W [ {W}](1)  & \mx{for any Lyndon word } U<V. 
\end{aligned}
\end{equation}
 All coefficients above are integers.
\end{defn}
Because $\{\,,\,\}_{1;x}$ restricted to $\TL[x]$ is the usual cobracket on
$\Lie(X_0,X_1)$, the $\alpha_{U,V}^W $ are the $\alpha$'s of Definition
\ref{defn:alpha} ; Similarly the $\gamma$'s are the structure coefficients of
Ihara bracket.

Special derivations acts on $X_1$ and $D_{X_0}$ is simply bracketing
with $X_0$. This and the above remark show:
\begin{lem}[{\cite[Lemma 4.18]{SouMPCC}}]\label{relV01} 
Let $W$ be a Lyndon word of length greater than or equal to
  $2$.
Then the following holds for any Lyndon words $U,V$ :
\begin{itemize}
\item $\beta_{0,V}^W=0$,
\item $\beta_{V,0}^W=\alpha_{0,V}^W$
\item $\beta_{U,1}=0$,
\item $\beta_{1,U}^W=\alpha_{U,1}^W$.
\item $\gamma_{U,V}^W=\alpha_{U,V}^W+\beta_{U,V}^W-\beta_{V,U}^W$.
\end{itemize}
In particular, $\beta_{0,0}^W=\beta_{1,1}^W=0$. We also have 
\[
\alpha_{U,V}^{\ve}=\beta_{U,V}^{\ve}=\gamma_{U,V}^{\ve}=0
\]
for $\ve \in\{0,1\}$.
\end{lem}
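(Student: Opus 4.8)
The plan is to reduce every assertion to the defining formula for the cross-term bracket together with three elementary facts about special derivations. First I would record the general translation: by the antisymmetry of $\{\,,\,\}_{1;x}$ on cross-terms and the defining relation $\{g(1),f(x)\}_{1;x}=D_g(f)(x)$, one has $\{[U](x),[V](1)\}_{1;x}=-D_{[V]}([U])(x)$, so that comparison with the second line of \eqref{eqdef:alphabeta} gives $\sum_W\beta_{U,V}^W[W]=-D_{[V]}([U])$. The three facts I would then invoke are: $D_f(X_0)=0$ for every $f$ (this is built into the definition of a special derivation); $D_{X_0}$ is right bracketing $f\mapsto[f,X_0]$ (read off $D_{X_0}(X_1)=[X_1,X_0]$); and $D_{X_1}=0$, since $D_{X_1}(X_0)=0$ and $D_{X_1}(X_1)=[X_1,X_1]=0$ force a derivation to vanish on both generators.

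With these in hand the four $\beta$-identities follow by substitution into the displayed relation. For $\beta_{0,V}^W$ I would use $[0]=X_0$ and $D_{[V]}(X_0)=0$ to get $\sum_W\beta_{0,V}^W[W]=-D_{[V]}(X_0)=0$. For $\beta_{V,0}^W$ I would compute $\sum_W\beta_{V,0}^W[W]=-D_{X_0}([V])=-[[V],X_0]=[X_0,[V]]$, which is exactly $\sum_W\alpha_{0,V}^W[W]$, since the bracket on $\TL[x]$ is the free bracket and $0<V$. For $\beta_{U,1}^W$ I would invoke $D_{X_1}=0$ to get $-D_{[1]}([U])=0$. For $\beta_{1,U}^W$ I would compute $-D_{[U]}(X_1)=-[X_1,[U]]=[[U],X_1]=[[U],[1]]$, which equals $\sum_W\alpha_{U,1}^W[W]$ because $U<1$. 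The two ``in particular'' assertions $\beta_{0,0}^W=\beta_{1,1}^W=0$ are then the cases $V=0$ and $U=1$ of the first and third identities.

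The relation $\gamma_{U,V}^W=\alpha_{U,V}^W+\beta_{U,V}^W-\beta_{V,U}^W$ I would obtain directly from the definition of Ihara's bracket: expanding $\{[U],[V]\}=[[U],[V]]+D_{[U]}([V])-D_{[V]}([U])$ and substituting $[[U],[V]]=\sum_W\alpha_{U,V}^W[W]$, $D_{[V]}([U])=-\sum_W\beta_{U,V}^W[W]$ and $D_{[U]}([V])=-\sum_W\beta_{V,U}^W[W]$ yields the claim after comparison with the $\gamma$-line of \eqref{eqdef:alphabeta}. Finally, the vanishing $\alpha_{U,V}^{\ve}=\beta_{U,V}^{\ve}=\gamma_{U,V}^{\ve}=0$ for $\ve\in\{0,1\}$ I would deduce from the weight grading: since $U,V$ are nonempty Lyndon words, each of $[[U],[V]]$, $D_{[V]}([U])$ and the Ihara bracket $\{[U],[V]\}$ lands in weight $|U|+|V|\geqs 2$, so no weight-one basis element $[\ve]$ can occur.

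The only genuine care needed is sign bookkeeping, and in particular pinning down the direction of $D_{X_0}$: the identity $\beta_{V,0}^W=\alpha_{0,V}^W$, as opposed to its negative, relies on reading $D_{X_0}(f)=[f,X_0]$ off the definition and then using graded antisymmetry of the free bracket to rewrite $-[[V],X_0]=[X_0,[V]]$. I expect no obstacle beyond keeping these signs, and the order of the two lower indices of $\beta$, consistent throughout; each bullet is a one-line substitution once the dictionary above is fixed.
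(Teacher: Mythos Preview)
Your proof is correct and follows the same approach as the paper, which merely records in one sentence that special derivations act trivially on $X_0$ and that $D_{X_0}$ is bracketing with $X_0$, leaving the routine substitutions implicit. You have simply spelled out those substitutions in full, with the sign bookkeeping done correctly.
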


\subsection[Coaction and Lie coalgebra]{The dual setting : a coaction and a Lie
  coalgebra }\label{subsec:coLie} 
The Lie algebra $Lie(X_0,X_1)$ is graded by the number of letters appearing inside a
bracket. Hence there is an induced grading on $\TL[1;x]$. Taking the graded dual
of the $\TL[1;x]$, we obtain a Lie coalgebra $\Tcl[1;x]$.

\begin{defn}
The elements of the  dual basis of the Lyndon bracket basis $[W](x)$ of $\TL[x]$
are denoted by $\T W(x)$. Similarly, $\T W(1)$ denotes, for a Lyndon Word $W$
the corresponding element in the basis dual to the basis of $\TL[1]$ given by the
$[W](1)$'s.  
\end{defn}
For $a$ in $\{1,x\}$, the elements $\T W(a)$ can be represented by a linear
combination of rooted trivalent tree with leaves decorated by $0$ and $1$ and
root decorated by $a$ (cf. \cite[Section 4.3]{SouMPCC}). This remark explains
the notation which is the same as in \cite{SouMPCC}.

A basis of $\Tcl[1;x]\w \Tcl[1;x]$ is given by the union of the
following families:
\begin{align*}
\T U(x)\w\T V(x) & \mx{ for any Lyndon word } U<V\\
\T U(x)\w\T V(1)  & \mx{ for any Lyndon word } U\neq V\\
\T U(1)\w\T V(1) & \mx{ for any Lyndon word } U<V. 
\end{align*}

By duality between $\TL[1;x]$ and $\Tcl[1;x]$ one has the following:
\begin{prop} \label{dcyTw}The bracket $\{,\}_{1;x}$ on $\TL[1,x]$ induces a
  cobracket on $\Tcl[1;x]$  
\[
\dc : \Tcl[1;x] \lra \Tcl[1;x] \w \Tcl[1;x]. 
\]
In terms of the above basis one gets 
\begin{equation} \tag{ED-T}\label{ED-T}
\dc(\T W(x))=\sum_{U<V} \alpha_{U,V}^W\T U(x) \w \T V(x) + 
\sum_{U,V} \beta_{U,V}^W\T U(x) \w \T V (1)
\end{equation}
and 
\begin{equation}\label{eq:dcyTw1}
\dc(\T W(1))=\sum_{U<V} \gamma_{U,V}^W\T U(1) \w \T V(1) 
\end{equation}
where $U$ and $V$ are Lyndon words.  The coefficients $\alpha_{U,V}^W$, $\beta_{U,V}^W$ and
$\gamma_{U,V}^W$ are those defined in Equation \eqref{eqdef:alphabeta}.
\end{prop}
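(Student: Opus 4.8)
The plan is to obtain $\dc$ purely by graded duality from the bracket $\{\,,\,\}_{1;x}$ and then to read off the two displayed formulas by pairing against the explicit bases introduced above. First I would record that $\TL[1;x]$ is graded by weight (the number of letters in a bracket), with each graded piece finite dimensional because $\Lie(X_0,X_1)$ is. The bracket $c=\{\,,\,\}_{1;x}:\TL[1;x]\ot\TL[1;x]\lra\TL[1;x]$ is weight preserving, so its graded transpose $c^*$ carries $\Tcl[1;x]$ into $(\TL[1;x]\ot\TL[1;x])^*$; in each weight the canonical identification $(\TL[1;x]\ot\TL[1;x])^*\simeq\Tcl[1;x]\ot\Tcl[1;x]$ turns $c^*$ into a map $\dc:\Tcl[1;x]\lra\Tcl[1;x]\ot\Tcl[1;x]$. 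Dualising the antisymmetry $c\circ\tau=-c$ gives $\tau\circ\dc=-\dc$, so $\dc$ factors through $\Tcl[1;x]\w\Tcl[1;x]$, and dualising the Jacobi relation gives the co-Jacobi relation; this is exactly the duality between the Jacobi identity and $D^2=0$ recalled in the discussion of $\Omega_{coL}$. As every element sits in cohomological degree $0$ (the weight being only an internal grading), no Koszul signs intervene.

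Next I would compute the matrix coefficients. Writing $\langle\,,\,\rangle$ for the pairing between $\TL[1;x]$ and its dual, extended to wedge squares, the definition of the transpose gives, for Lyndon words $W,P,Q$ and indices $a,b,c\in\{1,x\}$,
\[
\langle\dc(\T W(a)),\,[P](b)\w[Q](c)\rangle=\langle\T W(a),\,\{[P](b),[Q](c)\}_{1;x}\rangle .
\]
Hence the coefficient of $\T P(b)\w\T Q(c)$ in $\dc(\T W(a))$ equals the coefficient of $[W](a)$ in the bracket $\{[P](b),[Q](c)\}_{1;x}$, and it remains only to record, family by family in the basis of $\TL[1;x]\w\TL[1;x]$, into which summand of $\TL[1;x]=\TL[x]\op\TL[1]$ the relevant bracket lands.

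The formulas then fall out of the block-triangular shape of the semi-direct sum: one has $\{\TL[x],\TL[x]\}_{1;x}\subseteq\TL[x]$, the mixed bracket $\{g(1),f(x)\}_{1;x}=D_g(f)(x)$ again lands in $\TL[x]$, and $\{\TL[1],\TL[1]\}_{1;x}\subseteq\TL[1]$. Therefore a component along $[W](x)$ can be produced only by the $xx$-brackets (with coefficients $\alpha_{U,V}^W$, $U<V$) and by the mixed brackets (with coefficients $\beta_{U,V}^W$), which yields \eqref{ED-T}; while a component along $[W](1)$ can arise only from the $11$-brackets (with coefficients $\gamma_{U,V}^W$, $U<V$), which yields \eqref{eq:dcyTw1}. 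That these are precisely the coefficients of \eqref{eqdef:alphabeta} is their very definition.

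The hardest part is not conceptual but clerical: matching the index ranges to the chosen basis of $\Tcl[1;x]\w\Tcl[1;x]$. In the $xx$ and $11$ blocks one restricts to $U<V$ because $\T U(a)\w\T V(a)$ is antisymmetric, whereas in the mixed block the two factors $\T U(x)$ and $\T V(1)$ lie in different copies, so no ordering is imposed and the sum runs over all pairs $U,V$. I expect this bookkeeping, together with a careful check that the derivation (mixed) bracket really does land in $\TL[x]$ and never leaks into the $[W](1)$ component, to be the only place demanding attention; there is no analytic or geometric input, the statement being a transcription of the duality between a Lie bracket and its transpose cobracket.
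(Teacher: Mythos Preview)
Your proposal is correct and is exactly the argument the paper has in mind: the paper states the proposition with no proof beyond the phrase ``By duality between $\TL[1;x]$ and $\Tcl[1;x]$ one has the following,'' and you have simply unpacked what that duality entails (graded transpose of a weight-preserving bracket, dualised antisymmetry and Jacobi, structure constants read off as matrix coefficients, block-triangular shape of the semi-direct sum).
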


Note that one has 
\[
\dc(\T 0(x))=\dc(\T 0(1))=0 \qquad \mx{and} \qquad 
\dc(\T 1(x))=\dc(\T 1(1))=0.
\]
In weight $2$ one has 
\[
\dc(T_{01}(x))=\T 0 (x)\w \T 1(x)+ \T 1(x)\w \T 0(1). 
\]

Because of geometric constraints, one can not use directly the combinatorics of
the cobracket $\dc$ in this basis to defined a family of algebraic cycle over
$\ps$. 
One defines for any Lyndon word $W$
\[
\Tc W = \T W(x) \qquad \mx{and} \qquad \Tcu W=\T W(x)-\T W(1).
\]
The definition of $\Tc W$ can be thought as the difference $\Tc W=\T W(x)-\T
W(0)$ where the element $\T W(0)$ is equal to $0$. The elements $\Tc W$ and
$\Tcu W$ form a basis of $\Tcl[1;x]$ when $W$ runs through the set of Lyndon
words.

\begin{lem}[{\cite[Lemma 4.32]{SouMPCC}}]\label{lem:aapbbpa}
In this basis the cobracket $\dc$ is given by
\begin{equation}\label{ED-Tc}
\dc(\Tc W)=\sum_{U<V} a_{U,V}^W\Tc U \w \Tc V +
 \sum_{U,V}b_{U,V}^W\Tcu{U}\w \Tc V, 
\tag{ED-$\Tcg^0$}
\end{equation}
and 
\begin{equation}\label{ED-Tcu}
\dc(\Tcu W)=\sum_{U<V} \ap_{U,V}^W\Tcu U \w \Tcu V +
\sum_{U,V}\bp_{U,V}^W\Tcu{U}\w\Tc V
\tag{ED-$\Tcg^1$}
\end{equation}
where the coefficients $a$'s, $b$'s $\ap$'s and $\bp$'s are given by
\begin{equation}\label{abcoef}
\begin{array}{ll}
a_{U,V}^W=\alpha_{U,V}^W+\beta_{U,V}^W -\beta_{V,U}^{W} & \mx{ for } U<V\\[0.5em]
b_{U,V}^W=\beta_{V,U}^W& \mx{ for any }U,\, V
\end{array}
\end{equation}
and
\begin{equation}\label{apbpcoef}
\begin{array}{ll}
\ap_{U,V}^W=-a_{U,V}^W&\mx{ for } U<V,\\[0.5em]
\bp_{U,V}^W=a_{U,V}^W+b_{U,V}^W&\mx{ for } U<V,\\[0.5em]
\bp_{V,U}^W=-a_{U,V}^W+b_{V,U}^W&\mx{ for } U<V,\\[0.5em]
\bp_{U,U}^W=b_{U,U}^W&\mx{ for any } U.
\end{array}
\end{equation}
\end{lem}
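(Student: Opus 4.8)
The plan is to deduce the lemma from Proposition \ref{dcyTw} by a single sign-sensitive change of basis, replacing the basis $\{\T W(x), \T W(1)\}$ of $\Tcl[1;x]$ by $\{\Tc W, \Tcu W\}$. From $\Tc W = \T W(x)$ and $\Tcu W = \T W(x) - \T W(1)$ one inverts immediately to $\T W(x) = \Tc W$ and $\T W(1) = \Tc W - \Tcu W$. First I would substitute these into the two displays \eqref{ED-T} and \eqref{eq:dcyTw1} and expand every wedge, e.g. $\T U(1) \w \T V(1) = (\Tc U - \Tcu U)\w(\Tc V - \Tcu V)$, so that each cobracket becomes a combination of the four wedge types $\Tc U \w \Tc V$, $\Tc U \w \Tcu V$, $\Tcu U \w \Tc V$ and $\Tcu U \w \Tcu V$. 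All that remains is to collect these into the target basis using antisymmetry $\Tc U \w \Tcu V = -\Tcu V \w \Tc U$.

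For \eqref{ED-Tc} I would expand $\dc(\Tc W) = \dc(\T W(x))$. The $\alpha$-sum is already over $U<V$; the $\beta$-sum runs over all pairs, so I split it into the ranges $U<V$, $U>V$ and $U=V$ and reindex the $U>V$ range by antisymmetry. The $\Tc\w\Tc$ contributions then collapse to the coefficient $\alpha_{U,V}^W + \beta_{U,V}^W - \beta_{V,U}^W = a_{U,V}^W$, while the $\Tc U \w \Tcu V$ pieces, flipped to $\Tcu U \w \Tc V$ and relabelled, give $\beta_{V,U}^W = b_{U,V}^W$, reproducing \eqref{ED-Tc} with the coefficients of \eqref{abcoef}.

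For \eqref{ED-Tcu} I would write $\dc(\Tcu W) = \dc(\T W(x)) - \dc(\T W(1))$. The decisive simplification is that the $\Tc U \w \Tc V$ terms cancel: Lemma \ref{relV01} gives $\gamma_{U,V}^W = \alpha_{U,V}^W + \beta_{U,V}^W - \beta_{V,U}^W = a_{U,V}^W$, so the coefficient $a_{U,V}^W$ coming from $\dc(\T W(x))$ is killed by the $-\gamma_{U,V}^W$ coming from $-\dc(\T W(1))$. What survives is governed by $\gamma$: the $\Tcu U \w \Tcu V$ terms arise only from $-\dc(\T W(1))$ and give $\ap_{U,V}^W = -\gamma_{U,V}^W = -a_{U,V}^W$, whereas the $\Tcu U \w \Tc V$ terms receive the $b$-contribution from $\dc(\T W(x))$ together with the $\gamma$-contribution from $-\dc(\T W(1))$ (whose $\Tc\w\Tcu$ half must first be flipped, landing in the $U>V$ range). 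Handling the cases $U<V$, $U>V$ and $U=V$ separately and inserting $\gamma_{U,V}^W = \alpha_{U,V}^W + \beta_{U,V}^W - \beta_{V,U}^W$ then yields exactly $\bp_{U,V}^W = a_{U,V}^W + b_{U,V}^W$, $\bp_{V,U}^W = -a_{U,V}^W + b_{V,U}^W$ and $\bp_{U,U}^W = b_{U,U}^W$, i.e. \eqref{apbpcoef}.

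No step uses anything beyond linear algebra over the fixed bases, so there is no conceptual obstacle; the one genuinely error-prone point — and the part I would check most carefully — is the repeated use of wedge antisymmetry when passing between sums over ordered and unordered pairs, in particular tracking which half of the reindexed $\gamma$-sum feeds $\bp_{U,V}^W$ (the $U<V$ range) and which feeds $\bp_{V,U}^W$ (the $U>V$ range), and remembering that the diagonal wedges $\Tcu U \w \Tc U$ survive and produce the entry $\bp_{U,U}^W = b_{U,U}^W$.
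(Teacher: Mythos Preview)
Your argument is correct and is exactly the natural computation: the paper does not give its own proof here but merely cites \cite[Lemma 4.32]{SouMPCC}, and your change-of-basis derivation from Proposition~\ref{dcyTw} together with the identity $\gamma_{U,V}^W=\alpha_{U,V}^W+\beta_{U,V}^W-\beta_{V,U}^W$ of Lemma~\ref{relV01} is precisely what that cited proof amounts to. The one place to be slightly careful is that you invoke $\gamma_{U,V}^W=a_{U,V}^W$ to kill the $\Tc U\wedge\Tc V$ terms in $\dc(\Tcu W)$; this is fine, but note that it is exactly the content of Lemma~\ref{relV01} combined with your just-established formula \eqref{abcoef}, so the logical order is: first derive \eqref{abcoef}, then use it together with Lemma~\ref{relV01} to obtain \eqref{apbpcoef}.
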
 

From the explicit description of the coaction,  Lemma
\ref{relV01} gives explicitly some of the coefficients $\alpha$'s and
$\beta$'s. This translates as

\begin{lem}[{\cite[Lemma 4.33]{SouMPCC}}]\label{rem:coefAL}
\begin{itemize}~
\item If $W$ is the Lyndon word $0$ or $1$, then :
\[
a_{U,V}^0=b_{U,V}^0=\ap_{U,V}^0=\bp_{U,V}^0=0,
\quad
a_{U,V}^1=b_{U,V}^1=\ap_{U,V}^1=\bp_{U,V}^1=0
\]
for any Lyndon words $U$ and $V$.
\item  For any Lyndon word $W$, $U$ and $V$ of length at least $2$, one has
\[
a_{0,V}^W=\ap_{0,V}^W=0, 
\quad 
\bp_{U,0}^W=\bp_{U,0}^W= 0
\] 
and 
\[
a_{U,1}^W=\ap_{U,1}^W=0, 
\quad 
b_{1,V}^W=\bp_{1,V}^W=0.  \]
\end{itemize}
\end{lem}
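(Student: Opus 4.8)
The plan is to obtain every vanishing claimed here by pure substitution, with no new combinatorial or geometric input. Lemma \ref{lem:aapbbpa} already expresses each of $a$, $b$, $\ap$, $\bp$ as a $\Z$-linear combination of the structure coefficients $\alpha$ and $\beta$ through the formulas \eqref{abcoef} and \eqref{apbpcoef}, while Lemma \ref{relV01} supplies exactly the relations among $\alpha$ and $\beta$ that involve the two letters $0$ and $1$. So the whole argument reduces to feeding the latter into the former and keeping track of orderings.

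For the first assertion I would take $W=\ve$ with $\ve\in\{0,1\}$ and invoke the last line of Lemma \ref{relV01}, namely $\alpha_{U,V}^{\ve}=\beta_{U,V}^{\ve}=0$ for all Lyndon words $U,V$. Substituting into \eqref{abcoef} gives $a_{U,V}^{\ve}=\alpha_{U,V}^{\ve}+\beta_{U,V}^{\ve}-\beta_{V,U}^{\ve}=0$ and $b_{U,V}^{\ve}=\beta_{V,U}^{\ve}=0$; since by \eqref{apbpcoef} each $\ap$ and $\bp$ is a combination of these already-vanishing $a$'s and $b$'s, they vanish too.

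For the second assertion, with $W$ of length $\geqs 2$, I would treat $U=0$ and $V=1$ separately. Using $\beta_{0,V}^W=0$ and $\beta_{V,0}^W=\alpha_{0,V}^W$ from Lemma \ref{relV01}, formula \eqref{abcoef} yields $a_{0,V}^W=\alpha_{0,V}^W-\alpha_{0,V}^W=0$, hence $\ap_{0,V}^W=-a_{0,V}^W=0$, and $b_{U,0}^W=\beta_{0,U}^W=0$. Symmetrically, $\beta_{U,1}^W=0$ and $\beta_{1,U}^W=\alpha_{U,1}^W$ give $a_{U,1}^W=0$, $\ap_{U,1}^W=0$, and $b_{1,V}^W=\beta_{V,1}^W=0$.

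The one point I would handle with care is the ordering convention in \eqref{apbpcoef}, where the $\bp$ formulas are stated only for $U<V$. Because $0$ is smaller and $1$ is larger than every other Lyndon word, the coefficients $\bp_{U,0}^W$ and $\bp_{1,V}^W$ are of the ``first index larger'' type, so I must use the third line of \eqref{apbpcoef} rather than the second: this gives $\bp_{U,0}^W=-a_{0,U}^W+b_{U,0}^W=0$ and $\bp_{1,V}^W=-a_{V,1}^W+b_{1,V}^W=0$ by the values just computed. Selecting the correct line here is really the only place an error could creep in; the rest is mechanical.
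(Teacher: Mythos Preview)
Your proposal is correct and follows exactly the approach the paper indicates: the sentence preceding the lemma states that it is obtained by translating Lemma~\ref{relV01} through the change-of-basis formulas \eqref{abcoef} and \eqref{apbpcoef} of Lemma~\ref{lem:aapbbpa}, and that is precisely what you do. Your extra care in selecting the correct line of \eqref{apbpcoef} for $\bp_{U,0}^W$ and $\bp_{1,V}^W$ (where the first index is the larger one) is the only subtle point, and you handle it correctly.
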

Moreover for  $W$ a Lyndon word, 
\[
a_{U,V}^W=b_{U,V}^W=\ap_{U,V}^W=\bp_{U,V}^W=0
\]
as soon as the length of $U$ plus the length of  $V$ is not equal to the length of $W$.

From the definition of $a_{U,V}^W$ and Lemma \ref{relV01} one sees that
$a_{U,V}^W=\gamma_{U,V}^W$. This and Equations \eqref{ED-Tc} and \eqref{ED-Tcu}
shows that  
\begin{align}
\dc(\T W(1))=\dc(\Tc W -\Tcu W)=&
\sum_{U<V} a_{U,V}^W
(\Tc U - \Tcu U)\w (\Tc V - \Tcu V)\label{eq:dcyTcTcu}\\
=&\sum_{U<V} a_{U,V}^W
(\Tc U(1))\w (\Tc V(1))\label{eq:dcyTc(1)}
\end{align}

\subsection{A differential system for
  cycles}\label{subsec:diffsysforcycle}
In this subsection we review the cubical complex of quasi-finite cycles over $X$
computing higher Chow groups of $X$. This complex has a natural cdga structure.
M. Levine, in  \cite{LEVTMFG}, proved that  the $\HH^0$ of its bar construction
is the tannakian Hopf algebra of mixed Tate motive over $X$.

The ground field is $\Spec(\Q)$. The projective line minus three points $\ps{}$
will be simply denoted by $X$. A generic smooth quasi-projective variety will be
denoted by $Y$.

We define $\square^1$ to be $\square^1=\p^1 \sm \{1\}$ and $\square^n$ to be
$(\square^1)^n$. The standard projective coordinates on $\square^n$ is
$[U_i:V_i]$ on the $i$-th factor; and $u_i=U_i/V_i$ is the corresponding affine
coordinate. A face $F$ of codimension $p$  of $\square^n$ is given by
$u_{i_k}=\ve_k$ for $k=1,\ldots , p$ and $\ve_k$ in $\{0,\infty\}$. Such a face
is isomorphic to $\square^{n-p}$. For $\ve=0,\infty$ and $i$ in $\{1,\ldots, n\}$,
let $s_{i}^{\ve}$ denote the insertion 
morphism of a codimension $1$ face 
\[
s_i^{\ve} : \square^{n-1} \lra \square^n
\] 
given by the identification
\[
\square^{n-1} \simeq \square^{i-1} \times \{ \ve\} \times \square^{n-i}.
\]

\begin{defn}[{\cite[Example 4.1.6]{LEVTMFG}}]
\label{def:qfcycle}Let $Y$ be an irreducible smooth variety.
\begin{itemize}
\item Let $\Zqf[p](Y,n)$ denote the free abelian group
  generated by irreducible closed subvarieties 
\[Z \subset Y\times \square^n \times(\p^1\sm\{1\})^p
\] 
 such that the  restriction of the projection on $Y \times \square^n$,
\[
 p_1 : Z \lra Y\times \square^n,
\]
is dominant and quasi finite (that is of pure relative dimension  $0$).
\item We say that elements of $\Zqf[p](Y,n)$ are \emph{quasi-finite}.
\end{itemize}
\end{defn}
Intersection with codimension $1$ faces give morphisms
\[
\dN_{i}^{\ve}={(s_i^{\ve})}^* : \Zqf[p](Y,n) \lra \Zqf[p](Y,n-1)
\]

The symmetric group $\Sn[p]$ acts on $\Zqf[p](Y,n)$ by permutation of the
  factors of $(\p^1\sm\{1\})^p$. Let $Sym_{\p^1\sm\{1\}}^p$ denotes the projector
  corresponding to the \emph{symmetric} representation.

The symmetric group $\Sn[n]$ acts on $\Zqf[p](Y,n)$ by permutation of the factor
$\square^1$, and $(\Z/2\Z)^n$ acts on $\Zqf[p](Y,n)$ by $u_i \mapsto 1/u_i$ on
the $\square^1$. The sign representation of $\Sn[n]$ extends to a sign
representation  
\[
G_n=(\Z/2\Z)^n\rtimes\Sn[n] \lra \{1,-1 \}.
\] 
Let $\Alt_n\in \Q[G_n]$ be the corresponding projector.

\begin{defn} Let $\Ngqf{Y}{k}(p)$ denote 
\[
\Ngqf{Y}{k}(p)=Sym_{\p^1\sm\{1\}}^p\circ \Alt_{2p-k}\left(\Zqf[p](Y,2p-k )\otimes \Q\right).
\]

\begin{itemize}
\item The intersection with codimension $1$ faces of
  $\square^{2p-k}$ induces a differential
\[
\da[Y]=\sum_{i=1}^{2p-k} (-1)^{i-1} (\dN_{i}^0 - \dN_{i}^{\infty}) 
\]
of degree $1$.
\item The complex of quasi finite cycles is defined by 
\[
\Ngqf{Y}{\bullet}=\Q \oplus \bigoplus_{p \geqs 1}\Ngqf{Y}{\bullet}(p).
\]
\item  Concatenation of factors $\square^n$ and of
  factors $ \p^1\sm\{1\}$ followed by the  pull-back by the
diagonal  gives a product structure to  $\Ngqf{X}{\bullet}$. This product is
graded commutative and $\Ngqf{X}{\bullet}$ is a cdga (\cite[Section 4.2]{LEVTMFG}). 
\end{itemize}
\end{defn}

Thanks to \cite[Chapter IV and VI]{FSVCtrMHT}, the cohomology of
$\Ngqf{X}{\bullet}$ agrees with 
higher Chow groups of $Y$ tensored with $\Q$  (one can also see \cite[Lemma
4.2.1]{LEVTMFG}).
  
In \cite{SouMPCC}, the author defined two weight $1$ degree $1$ 
cycles $\Lco$ and $\Lcz$ in $\cNg{X}{1}$ as the image under
$Sym_{\p^1\sm\{1\}}^1\circ \Alt_{1}$  of the irreducible varieties defined 
respectively by: 
\[
Z_0 \subset X\times \square^1\times (\p^1\sm\{1\}): (U-V)(A-B)(U-xV)+x(1-x)UVB=0
\] 
and
\[
Z_1 : (U-V)(A-B)(U-(1-x)V)+x(1-x)UVB=0.
\] 

Starting with these two cycles, the author built in \cite{SouMPCC} two families
of degree $1$ elements in $\Nge{X}{\bullet}$ whose differential are given by the
cobracket in $\Tcl[1;x]$.

Let $j$ be the inclusion $\ps=X  \hookrightarrow   \A^1$. The differential on
$\Nge{X}{\bullet}$ is simply denoted by $\dN$.
\begin{thm}[\cite{SouMPCC}]\label{thm:cycleLcLcu}  For any Lyndon word of length
  $p\geqs2$, there exist
  two cycles $\Lc_W$ and $\Lcu_W$ in $\Nge{X}{1} (p)$ such that:
\begin{itemize}
\item There exist cycles $\ol{\Lc_W}$, $\ol{\Lcu_W}$ in $\Nge{\A^1} 1 (p)$ such
  that
\[
\Lc_W=j^*(\ol{\Lc_W}) \qquad \mx{and} \qquad \Lcu_W=j^*(\ol{\Lcu_W}). 
\]
\item The restriction of $\ol{\Lc_W}$ (resp. $\ol{\Lcu}$) to the fiber $t=0$ (resp. $t=1$)
  is empty. 
\item The cycle $\Lc_W$ and $\Lcu_W$ satisfy the following differential
  equations in $\Nc$:
\begin{equation}\label{ED-Lc}
\dN(\Lc_W)=-\left(\sum_{U<V} a_{U,V}^W\Lc_U \Lc_V + \sum_{U,V}b_{U,V}^W\Lcu_{U}\Lc_V\right)
\tag{ED-$\Lc$}
\end{equation}
and 
\begin{equation}\label{ED-Lcu}
\dN(\Lcu_W)=-\left(\sum_{U<V} \ap_{U,V}^W\Lcu_U \Lcu_V +
\sum_{U,V}\bp_{U,V}^W\Lcu_{U}\Lc_V\right)
\tag{ED-$\Lcu$}
\end{equation}
\end{itemize}  
where coefficients $a$'s, $b$'s, $\ap$'s and $\bp$'s are the ones of equations
\eqref{ED-Tc} and \eqref{ED-Tc}.
\end{thm}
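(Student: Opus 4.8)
The plan is to construct the two families by a simultaneous induction on the weight $p$, taking as base case the explicit weight-one cycles $\Lcz$ and $\Lco$ of $\cNg{X}{1}$ that generate $\HH^1(\Nge{X}{\bullet})$. At each inductive step I would work first over $\A^1$, producing cycles $\ol{\Lc_W}$ and $\ol{\Lcu_W}$ with prescribed boundaries, and only afterwards restrict along $j:\ps=X\hra\A^1$ to obtain $\Lc_W=j^*(\ol{\Lc_W})$ and $\Lcu_W=j^*(\ol{\Lcu_W})$. The two families must be built together because, as Lemma \ref{lem:aapbbpa} shows, the right-hand sides of (ED-$\Lc$) and (ED-$\Lcu$) each mix the two kinds of terms (those built from $\Lc$ and those from $\Lcu$) through the coefficients $a,b,\ap,\bp$.

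First I would assemble the candidate boundary over $\A^1$,
\[
\ol{B_W}=-\Bigl(\sum_{U<V} a_{U,V}^W\,\ol{\Lc_U}\,\ol{\Lc_V}
+\sum_{U,V} b_{U,V}^W\,\ol{\Lcu_U}\,\ol{\Lc_V}\Bigr),
\]
together with its analogue $\ol{B_W'}$ governing $\Lcu_W$, formed from the cycles already built in lower weight. The key point is that these are $\dN$-closed. This is the geometric shadow of the identity $\dc^2=0$ on $\Tcl[1;x]$, i.e.\ the dual of the Jacobi relation for the Ihara bracket: expanding $\dN(\ol{B_W})$ by the Leibniz rule, substituting the inductive differential equations for the lower-weight cycles, and collecting terms, the vanishing of the resulting triple products is exactly the quadratic relation satisfied by the structure coefficients, with the degenerate contributions killed by Lemma \ref{rem:coefAL}. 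The sign conventions of the cdga product and of $\dN$ have to be tracked carefully throughout.

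Next I would produce an explicit primitive from the multiplication $m:\A^1\times\A^1\lra\A^1$, which furnishes an explicit contracting $\A^1$-homotopy realizing the homotopy invariance of higher Chow groups. Introducing one further $\square^1$-coordinate and pulling the closed cycle $\ol{B_W}$ back through $m$ yields a cycle over $\A^1$ whose alternating face differential along the new coordinate recovers $\ol{B_W}$ at one end and the specialization to the fibre over $0$ at the other; since that fibre is arranged to be empty, the pullback is a primitive $\ol{\Lc_W}$ with $\dN(\ol{\Lc_W})=\ol{B_W}$ and empty fibre at $t=0$, and the parallel construction built around the point $1$ gives $\ol{\Lcu_W}$ with empty fibre at $t=1$. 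Applying the projector $\mathrm{Sym}^p_{\p^1\sm\{1\}}\circ\Alt$ places the result in $\Nge{\A^1}{1}(p)$, and $j^*$ then returns $\Lc_W,\Lcu_W\in\Nge{X}{1}(p)$ obeying (ED-$\Lc$) and (ED-$\Lcu$) in $\Nc$, as $j^*$ is a morphism of cdga's commuting with $\dN$. Cohomologically the \emph{existence} of a primitive is automatic, since homotopy invariance gives $\HH^2(\Nge{\A^1}{\bullet}(p))=0$ for $p\geqs 2$; the role of $m$ is only to make an explicit choice with control over the fibre.

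The main obstacle I expect is geometric rather than homological: one must verify that $m^*$ of a quasi-finite cycle remains quasi-finite, i.e.\ that the projection of the new cycle onto $\A^1\times\square^{n+1}$ stays dominant of pure relative dimension $0$, so that the output genuinely lands in $\Zqf[p]$ and survives the symmetrization projector $\mathrm{Sym}^p_{\p^1\sm\{1\}}$. Equally delicate is the sign bookkeeping required to match the boundary on the nose with the right-hand sides of (ED-$\Lc$) and (ED-$\Lcu$) rather than merely up to sign, since the face maps $\dN_i^\ve$, the alternation $\Alt$, and the degree shift in the bar differential all contribute signs that must be reconciled. These verifications are the content of \cite{SouMPCC}.
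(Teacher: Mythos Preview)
Your proposal is correct and follows essentially the same approach as the one the paper attributes to \cite{SouMPCC}: an induction on the weight in which one first assembles the closed ``candidate boundary'' $\ol{A_W}$ over $\A^1$ from lower-weight cycles, and then uses the pull-back by the multiplication map $m_0$ (resp.\ its twist $m_1$) as an explicit $\A^1$-contracting homotopy to produce $\ol{\Lc_W}=m_0^*(\ol{A_W})$ (resp.\ $\ol{\Lcu_W}$) with the prescribed differential and empty fibre at $0$ (resp.\ $1$); this is precisely the mechanism recorded in Proposition~\ref{mehomotopy} and the formula $\ol{\Lc_W}=m_0^*(\ol{A_W})$ stated just after it. Your identification of the two genuine technical burdens---checking that $m_\ve^*$ preserves quasi-finiteness so that the output lies in $\Nge{\A^1}{1}(p)$, and the sign bookkeeping matching the boundary exactly to the right-hand side of (ED-$\Lc$) and (ED-$\Lcu$)---is also on target, and these are indeed the verifications carried out in \cite{SouMPCC}.
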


One will write generically a cycle in the above families as $\Lce_W$ with
$\ve$ in $\{0, 1\}$ when working over $X=\ps$ and $\ol{\Lce_W}$ when working
over $\A^1$.

The above equations differ from the cobracket in $\Tcl[1;x]$ given at equations
\eqref{ED-Tc} and \eqref{ED-Tcu} by a global minus sign. This is due to a shift
in the degree.  Hence the above cycles $\Lce_W$ differ from the  ones defined
in \cite{SouMPCC} by a global minus sign. 

\begin{rem}\label{rem:A1eqdiff}
The extension   $\ol{\Lce_W}$ of the cycles to $\Nge{\A^1}{\bullet}$
 satisfy the same differential equations as $\Lce_W$  by
  considering the Zariski closure of the   product in the R.H.S. of
  \eqref{ED-Lc} and \eqref{ED-Lcu}. However, this Zariski closure is \emph{not}
  decomposable: terms of the form $\ol{\Lcz \Lc_V}$ are not product in
  $\Nge{\A^1}{\bullet}$ because $\ol{\Lcz}$ is not in $\Nge{\A^1}{1}$ ; it is
  not quasi-finite over $0$ (cf proof of Proposition 6.3 in \cite{SouMPCC}). 
\end{rem}

Despite the above remark, Theorem  \ref{thm:cycleLcLcu} and the proof of Theorem
5.8 in \cite{SouMPCC} give two other but related families of cycles with
decomposable boundary in $\Nge{\A^1}{1}$. Their are described below. 

Let $W$ be a Lyndon word of length greater than $2$. We define $\ol{\LcLcu_W}$
to be the difference
\[
\ol{\LcLcu_W}=\ol{\Lc_W} -\ol{\Lcu_W}. 
\]
The geometric situation relates $\ps$, $\A^1$ and the point $\{1\}$ as follows:
\[
\begin{tikzpicture}
\matrix (m) [matrix of math nodes,
 row sep=2em, column sep=5em]
{X=\ps & \A^1\\
  &\{1\} \\
};
\path[->,font=\scriptsize]
(m-1-1) edge node[auto] {$j$} (m-1-2)
(m-1-2) edge node[left] {$p_1$} (m-2-2);
\path[->,bend right]
(m-2-2) edge node[right] {$i_1$} (m-1-2);
\end{tikzpicture}
\]
where $j$ is the open inclusion, $p_1$ is the projection onto $\{1\}$ and $i_1$
the closed inclusion (or the $1$-section). We define the constant cycle
$\ol{\Lc_W(1)}$ as 
\[
\ol{\Lc_W(1)}=p_1^*\circ i_1^* (\ol{\Lc_W})=p_1^*(\ol{\Lc_W}|_{x=1})
\]
where $\ol{\Lc_W}|_{x=1}$ denotes the fiber at $1$ of the cycle
$\ol{\Lc_W}$. Its restriction to $X$ is denote by $\Lc_W(1)$.
\begin{lem}\label{rem:XA1LcLcu} 
For any Lyndon word of length $p\geqs 2$ the cycle $\ol{\LcLcu_W}$ satisfies
\begin{equation}
\label{ED-LcLcu}
\dN\left(\ol{\Lc_W}-\ol{\Lcu_W}\right)=-\left(\sum_{0<U<V<1} a_{U,V}^W
\left(\ol{\Lc_U}-\ol{\Lcu_U}\right)\left(\ol{\Lc_V}-\ol{\Lcu_V}\right)
\right). 
\tag{ED-$\ol{\mathcal L^{0-1}}$}
\end{equation}

The differential of $\ol{\Lc_W(1)}$ is given by 
\begin{equation}\label{ED-Lc1}
\dN\left(\ol{\Lc_W(1)}\right)=
-\left(\sum_{0<U<V<1} a_{U,V}^W\ol{\Lc_U(1)}\ol{\Lc_V(1)} \right)
\end{equation}
The above equation also holds for $\Lc_W(1)$ and $i_1^*(\ol{\Lc_W})$.
\end{lem}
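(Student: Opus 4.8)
The plan is to establish \eqref{ED-LcLcu} first over $X=\ps$, where the products are honest products in the cdga $\Nge{X}{\bullet}$, and only then to promote the identity to $\A^1$; the equation \eqref{ED-Lc1} for the constant cycle is afterwards deduced by a functoriality argument. First I would compute $\dN(\Lc_W-\Lcu_W)$ over $X$ directly from the differential equations \eqref{ED-Lc} and \eqref{ED-Lcu} of Theorem \ref{thm:cycleLcLcu}, substituting the coefficient relations of Lemma \ref{lem:aapbbpa}. Using $\ap_{U,V}^W=-a_{U,V}^W$ turns the $\Lcu_U\Lcu_V$ contribution into $-a_{U,V}^W\Lcu_U\Lcu_V$, and splitting the remaining sum $\sum_{U,V}(\bp_{U,V}^W-b_{U,V}^W)\Lcu_U\Lc_V$ according to the ranges $U<V$, $V<U$ and $U=V$ and applying $\bp_{U,V}^W-b_{U,V}^W=a_{U,V}^W$, $\bp_{V,U}^W-b_{V,U}^W=-a_{U,V}^W$ and $\bp_{U,U}^W=b_{U,U}^W$ leaves exactly
\[
\dN(\Lc_W-\Lcu_W)=-\sum_{U<V}a_{U,V}^W\big(\Lc_U\Lc_V+\Lcu_U\Lcu_V-\Lcu_U\Lc_V+\Lcu_V\Lc_U\big).
\]
Since every cycle here has degree $1$, graded commutativity of $\Nge{X}{\bullet}$ gives $\Lcu_V\Lc_U=-\Lc_U\Lcu_V$, so the bracket factors as $(\Lc_U-\Lcu_U)(\Lc_V-\Lcu_V)$. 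This is precisely the cycle-level shadow of the Lie-coalgebra computation \eqref{eq:dcyTcTcu}, and the signs match because \eqref{ED-Lc}--\eqref{ED-Lcu} differ from the cobracket only by the global shift sign.

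Next I would restrict the range of summation to $0<U<V<1$. By Lemma \ref{rem:coefAL} the coefficient $a_{U,V}^W$ is supported on pairs with $|U|+|V|=|W|$, so the only terms lying outside $0<U<V<1$ are those in which $U$ or $V$ is a single letter $\ve\in\{0,1\}$; but at weight $1$ there is a single cycle attached to each letter, so $\Lc_\ve=\Lcu_\ve$ and the corresponding factor $\Lc_\ve-\Lcu_\ve$ vanishes. This yields \eqref{ED-LcLcu} over $X$, with an honest decomposable right-hand side.

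The passage to $\A^1$ is the main obstacle. A priori the differential of the extension $\ol{\Lc_W}-\ol{\Lcu_W}$ is only the Zariski closure of the right-hand side, and by Remark \ref{rem:A1eqdiff} such closures need not be decomposable. The point is that, after the reduction to $0<U<V<1$, every factor $\ol{\Lc_U},\ol{\Lcu_U}$ already lies in $\Nge{\A^1}{1}$ by Theorem \ref{thm:cycleLcLcu}, so the differences $\ol{\Lc_U}-\ol{\Lcu_U}$ and their pairwise products are genuine quasi-finite, decomposable elements of $\Nge{\A^1}{1}$ --- in contrast to the weight-$1$ terms $\ol{\Lcz\,\Lc_V}$ flagged in Remark \ref{rem:A1eqdiff}, whose failure came solely from the non quasi-finiteness of $\ol{\Lcz}$ over $0$. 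Checking that the closure of the boundary computed over $X$ coincides with these honest products is exactly what is supplied by the construction in the proof of Theorem 5.8 of \cite{SouMPCC}; this is the step I expect to require the most care, and it establishes \eqref{ED-LcLcu}.

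Finally I would deduce \eqref{ED-Lc1} by applying the fiber-at-$1$ pullback $i_1^*$ to \eqref{ED-LcLcu}. Since $i_1^*$ is a morphism of cdga commuting with $\dN$, and since $\ol{\Lcu_W}$ has empty fiber at $1$ by Theorem \ref{thm:cycleLcLcu}, one has $i_1^*(\ol{\Lcu_U})=0$, whence $i_1^*(\ol{\Lc_U}-\ol{\Lcu_U})=i_1^*(\ol{\Lc_U})$ and each product splits as $i_1^*(\ol{\Lc_U})\,i_1^*(\ol{\Lc_V})$; this already gives the claimed identity for $i_1^*(\ol{\Lc_W})$, the cycle-level shadow of \eqref{eq:dcyTc(1)}. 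Applying next the dg ring homomorphism $p_1^*$ gives \eqref{ED-Lc1} for $\ol{\Lc_W(1)}=p_1^*i_1^*(\ol{\Lc_W})$, and the version for $\Lc_W(1)$ follows by pulling back along the open immersion $j$, which is again a dg ring map.
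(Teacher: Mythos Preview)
Your explicit computation over $X$ is exactly the cycle-level incarnation of \eqref{eq:dcyTcTcu}, and this is precisely how the paper argues the first part (in one sentence). Your derivation of \eqref{ED-Lc1} by applying $i_1^*$ and then $p_1^*$ to \eqref{ED-LcLcu} is a clean variant of the paper's route, which instead takes the fiber at $1$ directly in \eqref{ED-Lc} and kills the $b$-sum using the vanishing of $\ol{\Lcu_U}$ at $t=1$; both are valid.

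There is, however, a genuine error in your restriction step. You claim that ``at weight $1$ there is a single cycle attached to each letter, so $\Lc_\ve=\Lcu_\ve$''. This is not how the weight-$1$ cycles enter \eqref{ED-Lc}--\eqref{ED-Lcu}: inspecting the $b$- and $\bp$-sums with Lemma~\ref{rem:coefAL} one sees that only $\Lcu_0=\Lcz$ and $\Lc_1=\Lco$ ever occur, and these are two \emph{distinct} nonzero cycles; the symbols $\Lc_0$ and $\Lcu_1$ never appear and there is no identification $\Lc_\ve=\Lcu_\ve$. The correct reason the sum restricts to $0<U<V<1$ is that the \emph{coefficients} already vanish: since $a_{U,V}^W=\gamma_{U,V}^W$ (Lemma~\ref{relV01}) and $X_0,X_1$ are central for Ihara's bracket, one has $a_{0,V}^W=a_{U,1}^W=0$ for all $U,V$; this is exactly what Lemma~\ref{rem:coefAL} records. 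Once you make this correction your passage to $\A^1$ becomes simpler than you feared: performing the coefficient restriction \emph{before} invoking graded commutativity, every surviving factor $\ol{\Lc_U},\ol{\Lcu_U}$ with $0<U,V<1$ is already quasi-finite over $\A^1$, so the identification of Zariski closures with honest products and the commutativity manipulation go through directly in $\Nge{\A^1}{\bullet}$, and no separate appeal to the proof of Theorem~5.8 of \cite{SouMPCC} is required.
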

\begin{proof}
The combinatoric being the same as in $\Tcl[1;x]$, the first part follows from
Equation \eqref{eq:dcyTcTcu}. The second part is a consequence of Lemma
\ref{rem:coefAL} because product of the form $\ol{\Lcu_U \Lc_V}$ have empty
fiber at $1$.
\end{proof}

The rest of this section shows that each family of ``differential system'' gives
rise to a family of elements in the corresponding bar constructions.

Let $B_X$, $B_{\A^1}$ and $B_{\{1\}}$ denote the bar
  construction over 
  $\Nge{X}{\bullet}$, $\Nge{\A^1}{\bullet}$ and $\Nge{\Sp(\Q)}{\bullet}$
  respectively. Let $Q_X$, 
  $Q_{\A^1}$ and $Q_{\{1\}}$ be the corresponding set of indecomposable
  elements. By an abuse of  notation, we will write $d_B$, $\Delta$, $\sha$ and
  $\delta_Q$ the natural operation in the corresponding spaces. When required by
  the context,
  we will precise the ``base'' space by the subscript $X$, $\A^1$ and $\{1\}$
  respectively. 

Note that the geometric relation between $X=\ps$, $A^1$ and $\{1\}$ gives rise
to morphisms of cdga between the corresponding cycles algebras: 
\[
\begin{tikzpicture}
\matrix (m) [matrix of math nodes,
 row sep=2em, column sep=5em]
{
\Nge{X}{\bullet} & \Nge{\A^1}{\bullet} \\
  &\Nge{\Sp(\Q)}{\bullet} \\
};
\clip[->,font=\scriptsize]
(m-1-2) edge node[auto] {$j^*$} (m-1-1)
(m-2-2) edge node[left] {$p_1^*$} (m-1-2);
\path[->,bend left]
(m-1-2) edge node[right] {$i_1^*$} (m-2-2);
\end{tikzpicture}
\]
which then induce morphisms between bar construction and sets of indecomposable
elements. These morphisms are also denoted $j^*$, $p_1^*$ and $i_1^*$.

  
\begin{thm}[bar elements]\label{bareltX}
For any Lyndon word $W$ of length $p$ there exist an element $\Lcb_W$,
in the bar construction  $B_X$ satisfying:
\begin{itemize}
\item Its image under the projection onto the tensor degree $1$ part $\pi_1 :
  B_X \lra \cNg{X}{\bullet}$ is $\pi_1(\Lcb_W)=\Lc_W$.
\item Its is in the image of the projector $\psh$; hence it is in $Q_X$.
\item It is  of bar degree $0$ and its image under $d_B$ is $0$. Thus it induced
  a class in $\HH^0(B_X)$ and in $\HH^0(Q_X)=Q_{\HH^0(B_X)}$.
\item Its image under $\delta_Q$ is given by the
  differential equations \eqref{ED-Lc} without the minus sign
\[
\delta_{Q}(\Lc_W)=\sum_{U<V} a_{U,V}^W\Lc_U \Lc_V +
  \sum_{U,V}b_{U,V}^W\Lcu_{U}\Lc_V.
\]
\end{itemize}
A similar statement holds for $\Lcu_W$, $\LcLcu_W$  and $\Lc_W(1)$ with replace
Equation \eqref{ED-Lc}
by equation \eqref{ED-Lcu}, \eqref{ED-LcLcu} and \eqref{ED-Lc1}. 
\end{thm}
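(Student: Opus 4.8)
The plan is to recognise the differential system \eqref{ED-Lc}--\eqref{ED-Lcu} as encoding a morphism of commutative dga out of a cobar construction, and then to transport it across the commutative/coLie bar--cobar adjunction. Write $\cL:=\Tcl[1;x]$ for the dg Lie coalgebra of Proposition \ref{dcyTw}, viewed as concentrated in cohomological degree $0$ (the weight serving as an auxiliary grading that every map below respects), so that $d_{\cL}=0$. Since by \eqref{ED-Tc}--\eqref{ED-Tcu} the cobracket $\dc$ sends a weight $p$ generator to a sum of wedges of strictly lower weight generators, and the weight $1$ generators are $\dc$-primitive, the iterated cobrackets $\delta_T(x)$ vanish on every tree $T$ with more than $\mathrm{wt}(x)$ leaves; hence $\cL$ is conilpotent and $\Omega_{coL}(\cL)=\big(S^{gr}(s\cL),d_{\Omega,coL}\big)$ is defined by Lemma \ref{cobarcoLie}. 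As $\cL$ lives in degree $0$, the generators $s\Tc W,\ s\Tcu W$ sit in degree $1$, matching $\Lc_W,\Lcu_W\in\Nge{X}{1}$.

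First I would set $\psi:s\cL\lra\Nge{X}{\bullet}$ to be the degree $0$ map sending $s\Tc W\mapsto\Lc_W$ and $s\Tcu W\mapsto\Lcu_W$ for every Lyndon word $W$, and extend it by freeness of $S^{gr}(s\cL)$ to a morphism of graded commutative algebras $\Psi:=S^{gr}(\psi):\Omega_{coL}(\cL)\lra\Nge{X}{\bullet}$. The one substantial point is that $\Psi$ is a chain map, and it suffices to verify $\Psi\circ d_{\Omega,coL}=\dN\circ\Psi$ on the generators $s\cL$; there $d_{\Omega,coL}$ reduces to $D_2$ because $d_{\cL}=0$, and unwinding $D_2$ on $s\Tc W$ by means of \eqref{ED-Tc} turns this identity into exactly \eqref{ED-Lc} (and \eqref{ED-Tcu} into \eqref{ED-Lcu}). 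The hard part will be the sign bookkeeping: the global minus sign carried by \eqref{ED-Lc}--\eqref{ED-Lcu} relative to the cobracket is precisely the sign produced by the suspension map $\Delta_s(s)=-s\otimes s$ that enters $D_2$, i.e. the ``shift in degree'' noted right after Theorem \ref{thm:cycleLcLcu}. Matching all Koszul/suspension signs so that this reduction is exact is the main obstacle; everything else is formal.

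Granting that $\Psi$ is a morphism of cdga, I would apply the commutative/coLie adjunction of Section \ref{sec:barcobar} to obtain the adjoint morphism of dg Lie coalgebras
\[
\bar\Psi \;=\; B_{com}(\Psi)\circ\upsilon \,:\ \cL \lra B_{com}\big(\Nge{X}{\bullet}\big)=Q_X ,
\]
and define $\Lcb_W:=\bar\Psi(\Tc W)$ and $\Lcub_W:=\bar\Psi(\Tcu W)$, realised inside $B_X$ through Hain's splitting $i_Q$. Using the explicit unit $\upsilon=\psh\circ\big(\sum_{n\geqs1}\tilde\Delta_n\big)$ of the Claim together with the fact that $B_{com}(\Psi)$ acts factorwise, $\Lcb_W$ is the symmetrised combination $\psh$ applied to the tensors of products of cycles $\Lce_U$ produced by applying $\psi$ to $\sum_n\tilde\Delta_n(\Tc W)$.

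The four properties are then read off. For the first, the commutative diagram closing Section \ref{sec:barcobar} gives $\pi_1\circ\upsilon=\iota$, the canonical inclusion $\cL\hookrightarrow\Omega_{coL}(\cL)$; since $B_{com}(\Psi)$ preserves tensor degree, $\pi_1(\Lcb_W)=\Psi(\iota(\Tc W))=\psi(s\Tc W)=\Lc_W$. The second is immediate: $\bar\Psi$ lands in $B_{com}(\Nge{X}{\bullet})=Q_X$, which inside $B_X$ is exactly the image of $\psh$. For the third, $\bar\Psi$ is a degree $0$ dg morphism and $d_{\cL}=0$, so $d_Q\bar\Psi(\Tc W)=\bar\Psi(0)=0$; as $i_Q$ commutes with the differential this yields $d_B(\Lcb_W)=0$ with $\Lcb_W$ of bar degree $0$, hence classes in $\HH^0(B_X)$ and $\HH^0(Q_X)=Q_{\HH^0(B_X)}$. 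For the fourth, $\bar\Psi$ intertwines $\dc$ with $\delta_Q$, so
\[
\delta_Q(\Lcb_W)=(\bar\Psi\w\bar\Psi)\big(\dc(\Tc W)\big)
=\sum_{U<V} a_{U,V}^W\,\Lcb_U\w\Lcb_V+\sum_{U,V} b_{U,V}^W\,\Lcub_U\w\Lcb_V ,
\]
which is \eqref{ED-Lc} without its minus sign (compatibly with $\pi_1$). Finally, the variants follow by the same adjunction argument: $\Lcub_W$ from \eqref{ED-Tcu}/\eqref{ED-Lcu}, the difference $\Lcb_W-\Lcub_W=\bar\Psi(\T W(1))$ from \eqref{eq:dcyTcTcu} and \eqref{ED-LcLcu}, and the constant cycle $\Lcb_W(1)$ by applying the adjunction to the sub-coalgebra spanned by the $\T W(1)$, whose cobracket \eqref{eq:dcyTw1} is carried by $\psi(s\,\T W(1))=\Lc_W(1)$ to the system \eqref{ED-Lc1}.
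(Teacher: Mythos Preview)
Your proposal is correct and follows exactly the paper's strategy: build a cdga morphism $\psi:\Omega_{coL}(\Tcl[1;x])\to\Nge{X}{\bullet}$ from the match between \eqref{ED-Tc}--\eqref{ED-Tcu} and \eqref{ED-Lc}--\eqref{ED-Lcu}, then set $\Lcb_W=\psi_Q\circ\upsilon(\Tc W)$ via the bar/cobar unit. The only refinement the paper adds is the explicit prescription in weight~$1$, namely $\psi(s\Tc 0)=\psi(s\Tcu 1)=0$ while $\psi(s\Tc 1)$ and $\psi(s\Tcu 0)$ are the two weight-$1$ cycles, together with an appeal to Lemma~\ref{rem:coefAL} to ensure these zeros are compatible with the differential; your phrase ``for every Lyndon word $W$'' glosses over this, but it is a harmless imprecision rather than a gap.
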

\begin{proof}
The main point is the relation between $\Tcl[1;x]$ and the above family of
algebraic cycles and to use the unit of the adjunction cobar/bar.

As in Section \ref{subs:cobarcoLie},
$\Omega_{coL}(\Tcl[1;x])=S^{gr}(s\Tcl[1;x])$ denotes the 
cobar construction over the Lie coalgebra $\Tcl[1;x]$ concentrated purely in
degree $0$ (hence with $0$ as differential).   Let 
\[
\psi : \Omega_{coL}(\Tcl[1;x]) \lra \Nge{X}{\bullet}
\]
be the morphism of cdga induced by 
\[
s\Tc W \longmapsto \Lc_W, \qquad s\Tcu W \longmapsto \Lcu_W
\]
for any Lyndon word $W$ of length $p\geqs 2$ together with 
\[
\psi(s\Tcu 0)=\Lco,\quad \psi(s\Tc 1)=\Lcz \mx{ and }
\psi(s\Tc 0)=\psi(s\Tcu 1)=0
\]
where the prefix $s$ denotes the suspension. 

The morphism $\psi$ is compatible with the differential because of equations \eqref{ED-Lc}
and\eqref{ED-Lcu} and Lemma \ref{rem:coefAL}. The minus sign difference between
equations \eqref{ED-Lc} and \eqref{ED-Tc} (and similarly for Equation
\eqref{ED-Lcu}) makes it possible to define $\psi$ without sign (cf Section
\ref{subs:cobarcoLie}) 

It induces a morphism on the bar construction (for the associative case)
\[
\psi_B : B(\Omega_{coL}(\Tcl[1;x])) \lra B_X=B(\Nge{X}{\bullet})
\]
compatible with projection on tensor degree $n$ part (for any $n$) and with the
projector $\psh$ onto the indecomposable elements. 

Hence we obtain the following commutative diagram (of vector space) 
\[
\begin{tikzpicture}
\matrix (m) [matrix of math nodes,
 row sep=3em, column sep=7em]
{ & \Omega_{coL}(\Tcl[1;x]) & \Nge{X}{\bullet}  \\
 &B(\Omega_{coL}(\Tcl[1;x])) & B_X\\
\Tcl[1;x] & B_{com}(\Omega_{coL}(\cL))& Q_X \\
};
\path[->,font=\scriptsize]
(m-3-1) edge node[auto] {$\upsilon$} (m-3-2.west)
(m-3-1) edge node[below,xshift=-0.5ex] {} (m-1-2)
(m-1-2) edge node[auto] {$\psi$} (m-1-3)
(m-2-2) edge node[auto] {$\psi_{B}$} (m-2-3)
(m-3-2) edge node[auto] {$\psi_{Q}$} (m-3-3)
(m-2-2) edge node[auto] {$\pi_1$} (m-1-2)
(m-2-3) edge node[auto] {$\pi_1$} (m-1-3)
(m-3-2) edge node[auto] {} (m-2-2)
(m-3-3) edge node[auto] {} (m-2-3);
\end{tikzpicture}
\]
where the morphisms in the bottom line are morphisms of dg Lie algebras and
where $\upsilon$ is the unit of the bar/cobar adjunction.

The bar element $\Lcb_W$ is then defined by
\[
\Lcb_W=\psi_Q\circ \upsilon(\Tc W).
\]
Similarly we define $\Lcub_W$ and $\LcLcub W$.

In order to defined $\Lcb_W(1)$, one consider only the sub Lie coalgebra
$\Tcl[1]$ of $\Tcl[1;x]$ and the morphism 
\[
\psi : s\T W(1) \longmapsto \Lc_W(1)
\]
when $W$ has length $p \geqs 2$ and sending $s\T 0(1)$ and $s\T 1(1)$ to zero.
\end{proof}

Over $\A^1$ a similar statement holds:
\begin{prop}\label{bareltA1}
 For any Lyndon word $W$ of length $p\geqs 2$, there exists an element
 ${\LcLcub W}$ in the bar construction $B_{\A^1}$ satisfying 
\begin{itemize}
\item $\pi_1(\LcLcub W)=\ol{\LcLcu_W}=\ol{\Lc_W} -\ol{\Lcu_W}$.
\item It is  in the image of the projector
$\psh$; hence in $Q_{\A^1}$.
\item It is of bar degree $0$ and their image under $d_B$ is $0$; hence it
  give a  class in $\HH^0(B_X)$ and in $\HH^0(Q_X)$.
\item Its image under $\delta_Q$ is given by Equation \eqref{ED-LcLcu} without
  the   minus sign. 
\item $j^*(\LcLcub W)=\Lcb_W - \Lcub_W$.
\end{itemize}

A similar statement holds for $\ol{\Lc_W(1)}$ and
$i_1^*(\ol{\Lc_W})=i_1^*(\ol{\Lc_W(1)})$ with 
\eqref{ED-LcLcu} replace by \eqref{ED-Lc1}. The corresponding bar elements are
denoted 
\[
\ol{\Lcb_W(1)}
 \qquad \mx{and}\qquad \Lcb_{W,x=1}
\]
respectively.
We have  the appropriate compatibilities with $p_1^*$ and $j^*$.

\end{prop}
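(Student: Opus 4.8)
The proof runs in close parallel with that of Theorem~\ref{bareltX}; the one genuinely new ingredient is Lemma~\ref{rem:XA1LcLcu}, which supplies over $\A^1$ the \emph{decomposable} differential equations \eqref{ED-LcLcu} and \eqref{ED-Lc1}. The plan is to realise the families $\ol{\LcLcu_W}$ and $\ol{\Lc_W(1)}$ as images of the unit $\upsilon$ of the cobar/bar adjunction under suitable cdga morphisms, now built from the sub-Lie-coalgebra $\Tcl[1]$ of $\Tcl[1;x]$ rather than from $\Tcl[1;x]$ itself. Recall from \eqref{eq:dcyTw1} that, with $a_{U,V}^W=\gamma_{U,V}^W$, the cobracket of $\Tcl[1]$ is
\[
\dc(\T W(1))=\sum_{U<V} a_{U,V}^W\,\T U(1)\w\T V(1),
\]
which is exactly the combinatorics governing both \eqref{ED-LcLcu} and \eqref{ED-Lc1}.

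First I would define a morphism of cdga
\[
\psi^{0-1} : \Omega_{coL}(\Tcl[1])=S^{gr}(s\Tcl[1]) \lra \Nge{\A^1}{\bullet}
\]
by $\psi^{0-1}(s\T W(1))=\ol{\LcLcu_W}$ for every Lyndon word $W$ of length $p\geqs 2$ and $\psi^{0-1}(s\T 0(1))=\psi^{0-1}(s\T 1(1))=0$. The crucial point, and the only feature special to the $\A^1$ situation, is that $\psi^{0-1}$ is compatible with the differentials. While the individual cycles $\ol{\Lc_W}$ and $\ol{\Lcu_W}$ do not have decomposable boundaries in $\Nge{\A^1}{\bullet}$ (Remark~\ref{rem:A1eqdiff}), their difference does, by Lemma~\ref{rem:XA1LcLcu}: equation \eqref{ED-LcLcu} expresses $\dN(\ol{\LcLcu_W})$ as an honest sum of products $\ol{\LcLcu_U}\,\ol{\LcLcu_V}$ with $0<U<V<1$, i.e. with $U,V$ of length $\geqs 2$, so that each factor still lies in $\Nge{\A^1}{1}$. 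As in Theorem~\ref{bareltX}, the global sign difference between \eqref{ED-LcLcu} and $\dc$ is absorbed by the degree shift of the cobar construction, so $\psi^{0-1}$ is a dg morphism. The identical recipe with $\psi^{(1)}(s\T W(1))=\ol{\Lc_W(1)}$, now invoking \eqref{ED-Lc1}, produces a second morphism $\psi^{(1)}$; and the same recipe over $\Sp(\Q)$, sending $s\T W(1)\mapsto i_1^*(\ol{\Lc_W})$ and using the last clause of Lemma~\ref{rem:XA1LcLcu}, produces a third morphism $\psi_{\{1\}}$ into $\Nge{\Sp(\Q)}{\bullet}$.

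Each such $\psi$ induces, exactly as in Theorem~\ref{bareltX}, a morphism $\psi_B$ on the associative bar constructions compatible with $\Delta$, with the projections $\pi_1$ onto tensor degree $1$, and with Hain's projector $\psh$; passing to indecomposables gives $\psi_Q : B_{com}(\Omega_{coL}(\Tcl[1]))\lra Q_{\A^1}$ (resp.\ $Q_{\{1\}}$). I then set
\[
\LcLcub W=\psi^{0-1}_Q\circ\upsilon(\T W(1)),\qquad \ol{\Lcb_W(1)}=\psi^{(1)}_Q\circ\upsilon(\T W(1)),\qquad \Lcb_{W,x=1}=\psi_{Q,\{1\}}\circ\upsilon(\T W(1)).
\]
The intrinsic properties are then read off the general construction as in Theorem~\ref{bareltX}: since $\pi_1\circ\upsilon$ is the inclusion $x\mapsto sx$ (the commutative triangle closing Section~\ref{subsub:barcdga}) and $\psi_B$ commutes with $\pi_1$, one gets $\pi_1(\LcLcub W)=\psi^{0-1}(s\T W(1))=\ol{\LcLcu_W}$; the element lies in the image of $\psh$, hence in $Q_{\A^1}$, by construction; it has bar degree $0$ with $d_B=0$ because $\upsilon$ is a degree-$0$ dg morphism and $\Tcl[1]$ sits in degree $0$; and $\psi_Q$ intertwines $\delta_Q$ with $\dc$, which yields the cobracket \eqref{ED-LcLcu} (resp.\ \eqref{ED-Lc1}) without the minus sign.

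Finally I would verify the compatibilities through naturality of $\upsilon$ and functoriality of the bar construction. Let $\iota:\Tcl[1]\hookrightarrow\Tcl[1;x]$ be the inclusion of Lie coalgebras, so $\iota(\T W(1))=\Tc W-\Tcu W$. Theorem~\ref{thm:cycleLcLcu} gives $j^*(\ol{\Lc_W})=\Lc_W$ and $j^*(\ol{\Lcu_W})=\Lcu_W$, whence $j^*\circ\psi^{0-1}=\psi_X\circ\Omega_{coL}(\iota)$ as cdga morphisms, both sending $s\T W(1)\mapsto\Lc_W-\Lcu_W$ (here $\psi_X$ is the morphism of the proof of Theorem~\ref{bareltX}). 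Applying $B_{com}$, using naturality of $\upsilon$ and its linearity, and recalling $\Lcb_W=\psi_{Q,X}(\upsilon(\Tc W))$ and $\Lcub_W=\psi_{Q,X}(\upsilon(\Tcu W))$, one obtains
\[
j^*(\LcLcub W)=\psi_{Q,X}\bigl(\upsilon(\Tc W)-\upsilon(\Tcu W)\bigr)=\Lcb_W-\Lcub_W.
\]
The compatibilities $\ol{\Lcb_W(1)}=p_1^*(\Lcb_{W,x=1})$ and $j^*(\ol{\Lcb_W(1)})=\Lcb_W(1)$ follow in the same way from $\ol{\Lc_W(1)}=p_1^*\circ i_1^*(\ol{\Lc_W})$ and $j^*(\ol{\Lc_W(1)})=\Lc_W(1)$. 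The main obstacle is none of these formal diagram chases but precisely the decomposability already secured by Lemma~\ref{rem:XA1LcLcu}: once one knows that the boundaries of the \emph{combinations} $\ol{\LcLcu_W}$, $\ol{\Lc_W(1)}$ and $i_1^*(\ol{\Lc_W})$ are genuine products in the respective cycle cdga's, the morphisms $\psi^{0-1}$, $\psi^{(1)}$ and $\psi_{\{1\}}$ are legitimate and the adjunction machinery of Theorem~\ref{bareltX} carries over verbatim.
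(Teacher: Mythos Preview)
Your proposal is correct and follows essentially the same approach as the paper: use the sub Lie coalgebra $\Tcl[1]$ (with $a_{U,V}^W=\gamma_{U,V}^W$), invoke the decomposable differential equations \eqref{ED-LcLcu} and \eqref{ED-Lc1} from Lemma~\ref{rem:XA1LcLcu} to build the cdga morphism out of $\Omega_{coL}(\Tcl[1])$, and then run the adjunction machinery of Theorem~\ref{bareltX}. The paper's proof is much terser---it simply says the compatibilities follow because $i_1^*$, $p_1^*$ and $j^*$ are cdga morphisms---whereas you spell out the naturality argument via $\iota:\Tcl[1]\hookrightarrow\Tcl[1;x]$ and $j^*\circ\psi^{0-1}=\psi_X\circ\Omega_{coL}(\iota)$; but this is the same idea made explicit.
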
 
\begin{proof}
The proof goes as in Theorem \ref{bareltX} above but using only the sub Lie
coalgebra $\Tcl[1]$ and equations \eqref{ED-LcLcu} and \eqref{ED-Lc1}.
The coefficients $a_{U,V}^W$ appearing in the differential equation for the cycle
are equal to coefficients $\gamma_{U,V}^W$ giving the cobracket of the element $\T
W (1)$ (cf.  Lemma \ref{relV01}).

The relations between bar elements over  $\{1\}$, $\A^1$ and $X=\ps$ follow
because $i_1^*$, $p_1^*$ and $j^*$ are morphisms of cdga algebra.
\end{proof}

\begin{lem}\label{lemrestA1=0}
In $B_{\Q}$ the following relation holds :
\[
i_1^*(\LcLcub W)=i_1^*(\ol{\Lcb_W(1)})=\Lcb_{W,x=1}.
\]
\end{lem}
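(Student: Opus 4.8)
The plan is to reduce the statement to an identity of algebraic cycles over $\Sp(\Q)$ and then propagate it to the bar/indecomposable level using the functoriality of the construction of Theorem \ref{bareltX}. Recall that all three bar elements occurring in the statement are produced in the same way, as $\psi_Q\circ\upsilon(\T W(1))$ for a suitable cdga morphism $\psi$ out of the free commutative dga $\Omega_{coL}(\Tcl[1])=S^{gr}(s\Tcl[1])$, where $\upsilon$ is the universal unit of the cobar/bar adjunction. Concretely, let $\psi^{0-1}$ and $\psi^{(1)}$ be the cdga morphisms into $\Nge{\A^1}{\bullet}$ sending $s\T W(1)$ to $\ol{\LcLcu_W}$ and to $\ol{\Lc_W(1)}$ respectively, and let $\psi^{x=1}$ be the cdga morphism into $\Nge{\Sp(\Q)}{\bullet}$ sending $s\T W(1)$ to $i_1^*(\ol{\Lc_W})$; by Proposition \ref{bareltA1} these are exactly the morphisms defining $\LcLcub W$, $\ol{\Lcb_W(1)}$ and $\Lcb_{W,x=1}$.

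First I would establish the cycle-level identity
\[
i_1^*(\ol{\LcLcu_W})=i_1^*(\ol{\Lc_W(1)})=i_1^*(\ol{\Lc_W})
\qquad\text{in }\Nge{\Sp(\Q)}{\bullet}.
\]
For the first equality, write $\ol{\LcLcu_W}=\ol{\Lc_W}-\ol{\Lcu_W}$ and recall that $i_1^*$ is restriction to the fiber $x=1$; by Theorem \ref{thm:cycleLcLcu} the cycle $\ol{\Lcu_W}$ has empty fiber at $t=1$, so $i_1^*(\ol{\Lcu_W})=0$ and hence $i_1^*(\ol{\LcLcu_W})=i_1^*(\ol{\Lc_W})$. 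For the second equality, use $\ol{\Lc_W(1)}=p_1^*\circ i_1^*(\ol{\Lc_W})$ together with $p_1\circ i_1=\id$, which gives $i_1^*\circ p_1^*=\id$ on $\Nge{\Sp(\Q)}{\bullet}$ and therefore $i_1^*(\ol{\Lc_W(1)})=i_1^*(\ol{\Lc_W})$.

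Next I would upgrade this to the bar elements. Since $i_1^*\colon\Nge{\A^1}{\bullet}\lra\Nge{\Sp(\Q)}{\bullet}$ is a morphism of cdga, the composites $i_1^*\circ\psi^{0-1}$ and $i_1^*\circ\psi^{(1)}$ are again cdga morphisms out of $\Omega_{coL}(\Tcl[1])$. As $S^{gr}(s\Tcl[1])$ is free as a graded commutative algebra on the generators $s\T W(1)$, such a morphism is determined by its values on these generators. By the cycle-level identity the three morphisms $i_1^*\circ\psi^{0-1}$, $i_1^*\circ\psi^{(1)}$ and $\psi^{x=1}$ take the same value on every generator (namely $i_1^*(\ol{\Lc_W})$ when $W$ has length $\geqs 2$, and $0$ for $W\in\{0,1\}$), hence coincide. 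Applying the functorial assignment $\psi\mapsto\psi_Q$, which satisfies $(i_1^*\circ\psi)_Q=(i_1^*)_Q\circ\psi_Q$ with $(i_1^*)_Q$ the map induced by $i_1^*$ on indecomposables, and evaluating at $\upsilon(\T W(1))$ yields
\[
i_1^*(\LcLcub W)=\psi^{x=1}_Q(\upsilon(\T W(1)))=\Lcb_{W,x=1}
\quad\text{and}\quad
i_1^*(\ol{\Lcb_W(1)})=\psi^{x=1}_Q(\upsilon(\T W(1)))=\Lcb_{W,x=1},
\]
which is the claimed chain of equalities in $B_{\Q}$.

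The only genuinely geometric input is the vanishing $i_1^*(\ol{\Lcu_W})=0$, and this is the step I expect to need the most care: it rests entirely on the empty-fiber-at-$1$ property of $\ol{\Lcu_W}$ recorded in Theorem \ref{thm:cycleLcLcu}, and one must confirm that an empty fiber indeed forces the restricted cycle to be zero (the same reasoning already underlies the second part of Lemma \ref{rem:XA1LcLcu}). Everything else is formal: the reduction to generators uses only the freeness of $S^{gr}(s\Tcl[1])$, and the passage from cycles to bar elements uses only that $i_1^*$ is a cdga morphism and that the assignment $\psi\mapsto\psi_Q\circ\upsilon$ is natural in the target cdga.
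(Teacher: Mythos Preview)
Your proof is correct and follows the same route as the paper's: both reduce to the cycle-level identity $i_1^*(\ol{\LcLcu_W})=i_1^*(\ol{\Lc_W(1)})=i_1^*(\ol{\Lc_W})$ in $\Nge{\Q}{\bullet}$ (using the empty fiber of $\ol{\Lcu_W}$ at $1$ and $p_1\circ i_1=\id$), together with the fact that the three bar elements are all built from $\Tcl[1]$ via the same combinatorics (equations \eqref{eq:dcyTcTcu} and \eqref{eq:dcyTc(1)}, which the paper cites explicitly). Your version simply makes the mechanism --- equality of the cdga morphisms $i_1^*\circ\psi^{0-1}$, $i_1^*\circ\psi^{(1)}$, $\psi^{x=1}$ on the free generators of $S^{gr}(s\Tcl[1])$, hence equality of the induced $\psi_Q\circ\upsilon$ --- more explicit than the paper's two-line sketch.
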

\begin{proof}
It follows from equations \eqref{eq:dcyTcTcu} and \eqref{eq:dcyTc(1)} which
holds for the cycle on $\A^1$ and because in the cycle setting one has in
$\Nge{\Q}{\bullet}$: 
\[
i_1^*(\LcLcu_W)=i_1^*(\ol{\Lc_W(1)})=\Lcb_{W}|_{x=1}
\]
for any Lyndon word $W$.
\end{proof}

\section{A relative basis for mixed Tate motive over $\ps$}
\subsection{Relations between bar elements}\label{sec:relbar}
A motivation for introducing cycles $\Lcu_W$ in \cite{SouMPCC} was the  idea of
a correspondence 
\[
\Lc_W - \Lc_W(1) \leftrightarrow \Lcu_W
\]
In this section, we prove that this relation is 
 an equality in the
$\HH^0$ of the bar construction modulo shuffle products; that is 
\[
\Lcb_W - \Lcb_W(1) = \Lcub_W \qquad \in \HH^{0}(Q_X).
\]


A key point in order to build
cycles $\Lc_W$ and $\Lcu_W$ in \cite{SouMPCC} was a pull-back by the
multiplication. More precisely,  
the usual multiplication $ \A^1 \times \A^1 \lra \A^1$ composed with the
isomorphism $\A^1\times \square^1 \simeq \A^1 \times \A^1$  gives a
multiplication  
\[
m_0 : \A^1\times \square^1 \lra  \A^1
\]
sending $(t,u)$ to
$\frac{t}{1-u}$. Twisting $m_0$ by $\theta : t \mapsto
1-t$ gives a ``twisted multiplication''
\[
m_1 = (\theta \times \id)\circ m_0 \circ \theta : \A^1\times \square^1 \lra  \A^1.
\]

\begin{prop}[{\cite{SouMPCC}}] \label{mehomotopy}
For $\ve=0,1$ the
  morphism $m_{\ve}$ induces a linear morphism
\[
m_{\ve}^* : \Nge{\A^1}{k} \lra \Nge{\A^1}{k-1}
\]
giving a homotopy
\[
\da[\A^1] \circ m_{\ve}^* + m_{\ve}^* \circ \da[\A^1]
=\id - p_{\ve}^*\circ i_{\ve}^*
\]
where $p_{\ve} : \A^1 \lra \{\ve\}$ is the projection onto the point $\{\ve\}$ and
$i_{\ve}$ its inclusion is $\A^1$.
\end{prop}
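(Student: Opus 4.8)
The plan is to realise $m_\ve^*$ as a flat pullback of cycles that inserts one fresh cube coordinate in first position, and then to read the homotopy off the two faces of that coordinate. Writing $n = 2p-k$, a generator $Z$ of $\Nge{\A^1}{k}$ in weight $p$ sits inside $\A^1 \times \square^n \times (\p^1\sm\{1\})^p$; I would form
\[
M_\ve = m_\ve \times \id_{\square^n} \times \id_{(\p^1\sm\{1\})^p} : \A^1 \times \square^1 \times \square^n \times (\p^1\sm\{1\})^p \lra \A^1 \times \square^n \times (\p^1\sm\{1\})^p,
\]
and set $m_\ve^*(Z) = \Alt_{n+1}\big(M_\ve^*(Z)\big)$, the new $\square^1$ occupying the first slot. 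Since $m_\ve$ leaves the $(\p^1\sm\{1\})^p$ factors untouched, $M_\ve^*$ commutes with $Sym_{\p^1\sm\{1\}}^p$, so we may assume $Z$ is already symmetric there. One cube factor has been gained, so the output lands in weight $p$ and degree $2p-(n+1)=k-1$, as required.

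First I would check well-definedness. The map $m_\ve$ is flat: identifying $\square^1 \cong \A^1$, it is a dominant morphism of smooth surfaces to $\A^1$ with everywhere one-dimensional fibres --- including over the collapse value $\ve$, where the fibre picks up the component $\{u=\infty\}$ --- so $M_\ve$ is flat and $M_\ve^*$ is defined on cycles. Quasi-finiteness is then inherited: the fibre of $M_\ve^*(Z)$ over a point $(t,u,w) \in \A^1\times\square^1\times\square^n$ is canonically the fibre of $Z$ over $(m_\ve(t,u),w) \in \A^1\times\square^n$, hence finite, and dominance of $Z$ and of $m_\ve$ gives dominance of $M_\ve^*(Z)$. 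I expect this verification to be the main obstacle, the delicate locus being the face $u=\infty$ where $m_\ve$ collapses to the constant $\ve$ and the $Z$-fibre over $(\ve,w)$ is read off uniformly in $t$; this is exactly where the quasi-finite formalism of \cite{LEVTMFG} does its work.

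Next I would compute the two faces of the new coordinate. Since $m_0(t,u)=t/(1-u)$ and $m_1(t,u)=(t-u)/(1-u)$ both restrict to the identity at $u=0$ and to the constant $\ve$ at $u=\infty$ (indeed $m_0(t,\infty)=0$ and $m_1(t,\infty)=1$), we have $m_\ve|_{u=0}=\id_{\A^1}$ and $m_\ve|_{u=\infty}=i_\ve\circ p_\ve$, whence
\[
\dN_1^0\big(m_\ve^*(Z)\big)=Z \qquad\text{and}\qquad \dN_1^\infty\big(m_\ve^*(Z)\big)=p_\ve^* i_\ve^*(Z).
\]
All remaining faces only touch the old cube coordinates, on which $M_\ve$ again restricts to $m_\ve\times\id$, giving $\dN_{i+1}^\delta\circ m_\ve^* = m_\ve^*\circ\dN_i^\delta$ for $1\le i\le n$ and $\delta\in\{0,\infty\}$.

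Finally I would assemble the identity. Isolating the $i=1$ term in $\da[\A^1]=\sum_{i=1}^{n+1}(-1)^{i-1}(\dN_i^0-\dN_i^\infty)$ and using that the shift of index by one converts the sum over $i\ge 2$ into $-\,m_\ve^*\circ\da[\A^1]$, I get
\[
\da[\A^1]\big(m_\ve^*(Z)\big)=(\id - p_\ve^* i_\ve^*)(Z)-m_\ve^*\big(\da[\A^1](Z)\big),
\]
which rearranges to $\da[\A^1]\circ m_\ve^* + m_\ve^*\circ\da[\A^1]=\id-p_\ve^* i_\ve^*$. The one bookkeeping point here is compatibility with $\Alt_{n+1}$: since $m_\ve^*$ commutes with $Sym_{\p^1\sm\{1\}}^p$ and the face operators intertwine the alternating projector in the standard cubical manner, the identity descends from cycles to $\Nge{\A^1}{\bullet}$.
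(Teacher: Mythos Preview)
The paper does not supply a proof of this proposition; it is quoted from \cite{SouMPCC}. Your argument is correct and is exactly the standard cubical-homotopy verification one would expect in that reference: realise $m_\ve^*$ as flat pullback along $m_\ve\times\id$ with the new $\square^1$ placed in first position, compute the two new faces from $m_\ve(t,0)=t$ and $m_\ve(t,\infty)=\ve$, and observe that the remaining faces commute with $m_\ve^*$ up to the index shift producing the sign that turns the tail of the sum into $-m_\ve^*\circ\da[\A^1]$.

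Two minor remarks. First, your flatness check is right but could be stated more cleanly: $m_0$ factors as the isomorphism $\A^1\times\square^1\simeq\A^1\times\A^1$ followed by the multiplication $\A^1\times\A^1\to\A^1$, and the latter is flat since all its fibres (including the one over $0$, which is the union of the two coordinate axes) are one-dimensional; $m_1$ is then flat as a twist of $m_0$ by automorphisms. Second, the compatibility with $\Alt_{n+1}$ is slightly more than bookkeeping, since one must check that the pullback of an alternating cycle, once alternated in the new coordinate as well, still lies in the quasi-finite subcomplex; but this follows from your fibre description, which is uniform in $(t,u)$. Neither point affects the validity of your proof.
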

From this homotopy property, one derives the following relation between $m_0^*$
and $m_1^*$. 
\begin{lem}\label{lem:m0->m1}
One has:
\begin{equation}\label{m1->m0}
m_1^*=m_0^*-p_1^*\circ i_1^*\circ m^*_0 -\da[\A^1]\circ m^*_1 \circ m_0^* + m^*_1 \circ
m_0^* \circ \da[\A^1]+m_1^*\circ p_0^*\circ i_0^*
\end{equation}
and a similar expression for $m_1^*$.

In particular, when $b \in \Nge{\A^1}{k}$ satisfies $\da[\A^1](b)=0$ and
$i_0(b)=0$, one has:
\[
m_1^*(b)=m_0^*(b)-p_1^*\circ i_1^*(m_0^*(b))+\da[\A^1]( m^*_0 \circ m_1^* (b))
\]
\end{lem}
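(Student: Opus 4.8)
The plan is to derive both statements purely formally from the two homotopy identities of Proposition \ref{mehomotopy}, one for $\ve=0$ and one for $\ve=1$:
\[
\da[\A^1]\circ m_0^* + m_0^*\circ \da[\A^1]=\id - p_0^*\circ i_0^*, \qquad
\da[\A^1]\circ m_1^* + m_1^*\circ \da[\A^1]=\id - p_1^*\circ i_1^*.
\]
No further geometry enters the first identity; the only substantive point is a sign reconciliation at the very end, discussed below.

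First I would prove \eqref{m1->m0}. Rewriting the $\ve=0$ identity as $\id=\da[\A^1]\circ m_0^*+m_0^*\circ \da[\A^1]+p_0^*\circ i_0^*$ and composing on the left with $m_1^*$ gives
\[
m_1^*=m_1^*\circ\da[\A^1]\circ m_0^*+m_1^*\circ m_0^*\circ\da[\A^1]+m_1^*\circ p_0^*\circ i_0^*.
\]
Then I would use the $\ve=1$ identity in the form $m_1^*\circ\da[\A^1]=\id-p_1^*\circ i_1^*-\da[\A^1]\circ m_1^*$ to rewrite the first summand as $m_1^*\circ\da[\A^1]\circ m_0^*=m_0^*-p_1^*\circ i_1^*\circ m_0^*-\da[\A^1]\circ m_1^*\circ m_0^*$. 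Substituting this back yields exactly \eqref{m1->m0}, and the symmetric expression for $m_0^*$ in terms of $m_1^*$ follows by exchanging the roles of $0$ and $1$ throughout.

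For the consequence I would specialize \eqref{m1->m0} to an element $b\in\Nge{\A^1}{k}$ with $\da[\A^1](b)=0$ and $i_0^*(b)=0$. The summand $m_1^*\circ m_0^*\circ\da[\A^1]$ kills $b$ since $\da[\A^1](b)=0$, and $m_1^*\circ p_0^*\circ i_0^*$ kills $b$ since $i_0^*(b)=0$, leaving
\[
m_1^*(b)=m_0^*(b)-p_1^*\circ i_1^*(m_0^*(b))-\da[\A^1]\big(m_1^*\circ m_0^*(b)\big).
\]
It then remains only to match the last term with the one in the statement, i.e. to show $-\da[\A^1](m_1^*\circ m_0^*(b))=\da[\A^1](m_0^*\circ m_1^*(b))$.

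I expect this last step to be the only genuine obstacle. The operators $m_0^*$ and $m_1^*$ each introduce one extra $\square^1$ coordinate, so $m_1^*\circ m_0^*$ and $m_0^*\circ m_1^*$ land in a complex carrying the alternating projector $\Alt$ on the two new cubical factors; swapping these two factors (equivalently, swapping the order in which the two multiplications are applied) produces a sign, giving the graded-commutation $m_1^*\circ m_0^*=-\,m_0^*\circ m_1^*$ of these degree $-1$ operators, at least after applying $\da[\A^1]$. This turns $-\da[\A^1](m_1^*\circ m_0^*(b))$ into $\da[\A^1](m_0^*\circ m_1^*(b))$ and closes the argument. I would also note that for the intended application in $\HH^0(Q_X)$ only the $\da[\A^1]$-exactness of this correction term is used, so the precise sign is in any case inessential there.
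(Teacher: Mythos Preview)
Your derivation of \eqref{m1->m0} is exactly the paper's: write $\id$ via the $\ve=0$ homotopy, apply $m_1^*$, then substitute the $\ve=1$ homotopy into the term $m_1^*\circ\da[\A^1]\circ m_0^*$. The paper's proof is two lines and does precisely this.

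Where you go further than the paper is in flagging the sign/order mismatch in the ``in particular'' clause: the formal specialization of \eqref{m1->m0} yields $-\da[\A^1](m_1^*\circ m_0^*(b))$, whereas the displayed consequence has $+\da[\A^1](m_0^*\circ m_1^*(b))$. The paper's own proof simply says the homotopy identity ``gives the desired formula'' and does not address this; indeed the subsequent application at \eqref{01-m0m1} uses yet a third variant, $+\da[\A^1](m_1^*\circ m_0^*(b))$. Your proposed reconciliation via the graded anticommutation of the two degree $-1$ homotopy operators under $\Alt$ is the right idea, and your closing observation---that only $\da[\A^1]$-exactness of this term matters for the bar-level argument---is exactly how the paper in fact uses the lemma, so the discrepancy is harmless for what follows.
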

\begin{proof}
Let $b$ be in $\Nge{\A^1}{k}$. We treat only Equation \ref{m1->m0}. Using the
homotopy property for $m_0^*$, one writes
\[
b=\da[\A^1]\circ m_0^*(b)+m_0^*\circ \da[\A^1] + p_0^* \circ i_0^*(b). 
\]
Computing $m_1^*(b)$, the homotopy property 
\[
m_1^*\circ \da[\A^1] (m_ 0(b))=m_0^*(b)-p_1^*\circ i_1^*(m_0(b))
-\da[\A^1]\circ m_ 1^*(m_0^*(b))
\]
gives the desired formula.
\end{proof}

Writing $\ol{A_W}$  the $\A^1$-extension of the  right hand
side of Equation \eqref{ED-Lc}, cycles $\ol{\Lc_W}$  are obtained in \cite{SouMPCC} as 
\[
\ol{\Lc_W}=m_0^*(\ol{A_W}) 
\]
and similarly for $\Lcu_W$. 

An explicit computation in low weight \cite{SouMPCC, SouPolex} shows that
\[
\ol{\Lc_{01}}=m_0^*(\ol{\Lc_0} \, \ol{\Lc_1})
\quad \mx{and} \quad
\ol{\Lcu_{01}}=m_1^*(\ol{\Lc_0} \, \ol{\Lc_1}).
\]

Using Lemma \ref{lem:m0->m1}, one gets
\begin{equation}\label{01-m0m1}
\ol{\Lcu_{01}}=\ol{\Lc_{01}}-\ol{\Lc_{01}(1)} +\da[\A^1]( m_ 1^*\circ
m_0^*(\ol{\Lc_0} \, \ol{\Lc_1})).
\end{equation}  

Note that as a parametrized cycle, 
\[
 m_ 1^*\circ m_0^*(\ol{\Lc_0} \, \ol{\Lc_1})=m_1^*(\ol{\Lc_{01}})
\]
 can be written (omitting the projector $\Alt$) as
\[
 m_ 1^*\circ m_0^*(\ol{\Lc_0} \, \ol{\Lc_1})=
\left[ t ;
\frac{y-t}{y-1}, 1- \frac y x ,x,1-x
\right] \subset \A^1 \times \square^4.
\] 
This expression coincides, up to reparametrization, with the expression of
$C_{01}$ given in \cite[Example 5.5]{SouPolex} relating $\Lc_{01}$ and
$\Lcu_{01}$.

Thus, $\ol{\Lc_{01}}-\ol{\Lc_{01}(1)}$ and $\ol{\Lcu_{01}}$ differs only by a boundary. The
differential of $\ol{\Lc_{01}(1)}$ is zero and one can compute explicitly the
corresponding bar elements:
\[
\LcLcub {01}=[\Lc_{01}]-[\Lcu_{01}] ,
\quad
\Lcb_{01}(1)=[\Lc_{01}(1)].
\]
\begin{lem}\label{Lc01Lcu01B}
In  $B(\Nge{A^1}{\bullet})$, one has the following relation 
\begin{equation}\label{O1B=A1}
\LcLcub {01}-\Lcb_{01}(1) = d_B([ m_ 1^*(\ol{\Lc_{01}})]).
\end{equation}
Thus in $\HH^0(B_{\A^1})$ as in $\HH^0(Q_{\A^1})$ one has the equality between
\[
\LcLcub {01}-\Lcb_{01}(1)=0.
\]
Taking the restriction to $\ps$, one obtains in $\HH^0(Q_X)$
\begin{equation}\label{O1B=X}
\Lcb_{01}-\Lcub_{01}=j^*(\LcLcub{01}) =\Lcb_{01}(1).
\end{equation}
\end{lem}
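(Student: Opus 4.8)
The plan is to obtain \eqref{O1B=A1} by transporting the cycle-level identity \eqref{01-m0m1} through the tensor-degree-one part of the bar construction, and then to read off \eqref{O1B=X} by functoriality of $j^*$. The first thing I would check is that in weight $2$ the bar elements $\LcLcub{01}$ and $\Lcb_{01}(1)$ are \emph{single tensors}. The only possible tensor-degree-two contribution would come from the cobracket $\dc(\T{01}(1)) = \gamma_{0,1}^{01}\,\T 0(1)\w\T 1(1)$ of \eqref{eq:dcyTw1}, whose coefficient equals $a_{0,1}^{01}=\gamma_{0,1}^{01}$, the structure constant of the Ihara bracket $\{X_0,X_1\}=[X_0,X_1]+D_{X_0}(X_1)-D_{X_1}(X_0)$. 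Since $D_{X_0}(X_1)=[X_1,X_0]=-[X_0,X_1]$ and $D_{X_1}(X_0)=0$, this bracket vanishes, so $a_{0,1}^{01}=0$ and, as recorded before the statement, $\LcLcub{01}=[\ol{\Lc_{01}}]-[\ol{\Lcu_{01}}]$ and $\Lcb_{01}(1)=[\ol{\Lc_{01}(1)}]$.

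Next I would compute the bar differential on a single tensor. On tensor degree one the component $D_2$ would land in tensor degree $0=\Q$ and is therefore zero in the reduced bar complex, so $d_B=D_1$ and $d_B([a])=-[\dN(a)]$ for any $a$. Applying this to the degree-zero, weight-two cycle $m_1^*(\ol{\Lc_{01}})\in\Nge{\A^1}{0}(2)$ gives $d_B([m_1^*(\ol{\Lc_{01}})])=-[\da[\A^1](m_1^*(\ol{\Lc_{01}}))]$. It then remains only to substitute \eqref{01-m0m1}, rewritten as $\da[\A^1](m_1^*(\ol{\Lc_{01}}))=\ol{\Lcu_{01}}-\ol{\Lc_{01}}+\ol{\Lc_{01}(1)}$; this turns the right-hand side into $[\ol{\Lc_{01}}]-[\ol{\Lcu_{01}}]-[\ol{\Lc_{01}(1)}]=\LcLcub{01}-\Lcb_{01}(1)$, which is exactly \eqref{O1B=A1}.

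The two cohomological statements then follow formally. Because the right-hand side of \eqref{O1B=A1} is $d_B$ of the bar-degree $-1$ element $[m_1^*(\ol{\Lc_{01}})]$, the difference $\LcLcub{01}-\Lcb_{01}(1)$ is a boundary and vanishes in $\HH^0(B_{\A^1})$; applying the projector onto indecomposables, which commutes with the differential, gives the same vanishing in $\HH^0(Q_{\A^1})$. Finally I would push the resulting identity $\LcLcub{01}=\Lcb_{01}(1)$ forward along the cdga morphism $j^*$, using $j^*(\LcLcub{01})=\Lcb_{01}-\Lcub_{01}$ from Proposition \ref{bareltA1} together with the fact that $j^*$ carries the constant bar element $\Lcb_{01}(1)$ over $\A^1$ to its counterpart over $X$, to land \eqref{O1B=X} in $\HH^0(Q_X)$.

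The main point to watch---more a bookkeeping subtlety than a genuine obstacle---is the admissibility of $m_1^*(\ol{\Lc_{01}})$ as an element of $\Nge{\A^1}{0}(2)$, so that $[m_1^*(\ol{\Lc_{01}})]$ is a legitimate single-tensor bar element whose $d_B$ is controlled by Proposition \ref{mehomotopy} and Lemma \ref{lem:m0->m1}. One must ensure that pulling back the chosen $\A^1$-model $\ol{\Lc_{01}}=m_0^*(\ol{\Lc_0}\,\ol{\Lc_1})$ by the twisted multiplication $m_1$ yields a well-defined cycle, quasi-finite over $\A^1$; this is precisely the non-decomposability/non-quasi-finiteness issue flagged in Remark \ref{rem:A1eqdiff}, and it is settled here by the explicit parametrization $[t;\tfrac{y-t}{y-1},1-\tfrac{y}{x},x,1-x]\subset\A^1\times\square^4$ recalled just before the statement.
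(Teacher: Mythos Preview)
Your argument is correct and is precisely the spelling-out the paper intends: the lemma has no separate proof in the paper and is meant to follow directly from the explicit single-tensor descriptions of $\LcLcub{01}$ and $\Lcb_{01}(1)$ together with the cycle identity \eqref{01-m0m1}, which you unwind via $d_B([a])=-[\da[\A^1](a)]$ exactly as one should. One small simplification: you need not compute $\gamma_{0,1}^{01}=0$ to see that these bar elements are single tensors, since in the defining map $\psi$ (end of the proof of Theorem \ref{bareltX}) the length-one generators $s\T 0(1)$ and $s\T 1(1)$ are sent to zero, so any tensor-degree $\geqs 2$ contribution dies under $\psi_B$ regardless; your Ihara-bracket check is nonetheless valid and consistent with this.
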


For $W$ a Lyndon word of length $p \geqs 2$, the explicit comparison between $\Lcb_W$
and $\Lcub_W$ is in general much more complicated as 
\[
\da[\A^1](\ol{\Lc_W}-\ol{\Lcu_W})=-\sum_{0<U<V<1}a^W_{U,V}
(\ol{\Lc_U}-\ol{\Lcu_U})(\ol{\Lc_V}-\ol{\Lcu_V}) \neq 0.
\]

However, working at the bar construction level in $\HH^0(Q_{\A^1})$ allows to
use an induction argument.
\begin{thm}\label{LcLcuBA1}
For any Lyndon word $W$ of length $p\geqs 2$ the following relation holds
\begin{equation}\label{eq:LcLcuBA1}
\LcLcub W =\ol{\Lcb_W(1)} \qquad \mx{in}\quad  \HH^0(Q_{\A^1})=Q_{\HH^0(B_{\A^1})}.
\end{equation}
Taking the restriction to $X=\ps$, one obtains in $\HH^0(Q_{X})=Q_{\HH^0(B_{X})}$
\begin{equation}\label{eq:LcLcuBX}
\Lcb_W-\Lcub_W=j^*(\LcLcub W) =\Lcb_W(1).
\end{equation}
\end{thm}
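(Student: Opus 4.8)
The plan is to establish \eqref{eq:LcLcuBA1} over $\A^1$ and then read off \eqref{eq:LcLcuBX} by applying $j^*$. Write $\Delta_W=\LcLcub W-\ol{\Lcb_W(1)}\in\HH^0(Q_{\A^1})$; the entire content of the theorem is that $\Delta_W=0$. Indeed $j^*$ is a morphism of cdga, hence induces a morphism on the bar constructions and on their indecomposables, and Proposition \ref{bareltA1} records both $j^*(\LcLcub W)=\Lcb_W-\Lcub_W$ and $j^*(\ol{\Lcb_W(1)})=\Lcb_W(1)$; applying $j^*$ to $\Delta_W=0$ therefore yields exactly $\Lcb_W-\Lcub_W=\Lcb_W(1)$, which is \eqref{eq:LcLcuBX}. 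So the whole difficulty is concentrated over $\A^1$.

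The key input is $\A^1$-homotopy invariance, already packaged as Proposition \ref{mehomotopy}: specializing to $\ve=1$ gives
\[
\da[\A^1]\circ m_1^*+m_1^*\circ\da[\A^1]=\id-p_1^*\circ i_1^*,
\]
so $p_1^*\circ i_1^*$ is chain homotopic to the identity on $\Nge{\A^1}{\bullet}$. Together with the tautology $i_1^*\circ p_1^*=\id$ coming from $p_1\circ i_1=\id$, this shows $p_1^*$ is a quasi-isomorphism with homotopy inverse $i_1^*$. I would then push this through the bar formalism: the bar construction sends quasi-isomorphisms to quasi-isomorphisms, so $p_1^*$ induces an isomorphism of Hopf algebras $\HH^0(B_{\{1\}})\ira\HH^0(B_{\A^1})$ with inverse $i_1^*$; passing to indecomposables and using the identification $\HH^0(Q_{\A^1})=Q_{\HH^0(B_{\A^1})}$, the map $i_1^*:\HH^0(Q_{\A^1})\lra\HH^0(Q_{\{1\}})$ is an isomorphism, again inverse to $p_1^*$.

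Once $i_1^*$ is known to be an isomorphism on $\HH^0(Q_{\A^1})$, the proof collapses to a single restriction computation: Lemma \ref{lemrestA1=0} gives $i_1^*(\LcLcub W)=i_1^*(\ol{\Lcb_W(1)})$, hence $i_1^*(\Delta_W)=0$ and therefore $\Delta_W=p_1^*\,i_1^*(\Delta_W)=0$. This is the organization I would favour. The inductive formulation signalled just before the statement is an equivalent repackaging: the base case $p=2$ is Lemma \ref{Lc01Lcu01B}, and in the inductive step one notes that $\delta_Q(\LcLcub W)=\sum_{0<U<V<1}a_{U,V}^W\,\LcLcub U\w\LcLcub V$ and $\delta_Q(\ol{\Lcb_W(1)})=\sum_{0<U<V<1}a_{U,V}^W\,\ol{\Lcb_U(1)}\w\ol{\Lcb_V(1)}$ share the same structure constants (recall $a_{U,V}^W=\gamma_{U,V}^W$ by Lemma \ref{relV01}); since the words $U,V$ occurring there have length at least $2$ and strictly less than $p$, the induction hypothesis makes the two cobrackets agree, so $\Delta_W$ is primitive, and it is then killed by the same $i_1^*$-argument.

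The step I expect to be the main obstacle is precisely the descent of homotopy invariance from the cycle complex $\Nge{\A^1}{\bullet}$ to the Lie coalgebra of indecomposables $\HH^0(Q_{\A^1})$: one must check that the isomorphism induced by $p_1^*$ on $\HH^0$ of the associative bar construction genuinely passes to indecomposables with $i_1^*$ as a two-sided inverse. This rests on the quasi-isomorphism invariance of the bar construction and on the identification $\HH^0(Q)=Q_{\HH^0(B)}$, both granted in the excerpt, together with the compatibility of the Hain projector $\psh$ — through which $\LcLcub W$ and $\ol{\Lcb_W(1)}$ are defined — with $p_1^*$ and $i_1^*$, which is exactly the content of the compatibilities recorded in Proposition \ref{bareltA1}. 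Everything else, namely the cobracket bookkeeping and the final reduction along $j^*$, is formal once this comparison is secured.
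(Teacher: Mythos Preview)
Your argument is correct, and it is genuinely more direct than the paper's. You first promote the chain homotopy $m_1^*$ of Proposition \ref{mehomotopy} to the statement that $p_1^*$ is a quasi-isomorphism of cdga with one-sided inverse $i_1^*$, then invoke quasi-isomorphism invariance of the bar construction and the identification $\HH^0(Q)=Q_{\HH^0(B)}$ to conclude that $i_1^*:\HH^0(Q_{\A^1})\to\HH^0(Q_{\{1\}})$ is an isomorphism; Lemma \ref{lemrestA1=0} then kills $\Delta_W$ in one stroke, with no induction needed. The paper instead runs the induction you sketch in your second paragraph, but handles the endgame differently: once $\delta_Q(\Delta_W)=0$, it does \emph{not} appeal to injectivity of $i_1^*$ on $\HH^0(Q_{\A^1})$; rather, it pushes the primitive class $s\Delta_W$ into $\HH^1(\Omega_{coL}(\HH^0(Q_{\A^1})))$, identifies the latter with $\HH^1(\Nge{\A^1}{\bullet})$ via the Bloch--Kriz minimal model comparison (\cite[Corollary 2.31]{BKMTM}), and uses $\A^1$-invariance of higher Chow groups to represent the class by a constant cycle $[p_1^*(C)]$; only then is $i_1^*$ applied, together with Lemma \ref{lemrestA1=0}, to show $[C]$ is a boundary modulo shuffles. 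Your route is cleaner and uses strictly less machinery; the paper's route is more explicit about where the constant ``error term'' lives in the cycle complex itself, which fits the paper's emphasis on concrete cycle-level representatives but is logically unnecessary for the bare statement.
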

\begin{proof}
From  Lemma \ref{Lc01Lcu01B} above it is true for $p=2$ as there is then only
one Lyndon word to consider $W=01$.

Now we assume that the theorem is true for all Lyndon words of length $k$ with $2
\leqs k \leqs p-1$. Let $W$ be a Lyndon word of length $p$.

From Proposition \ref{bareltA1}, one has in $Q_{\A^1}$ and in particular in
$\HH^0(Q_{\A^1})$:
\[
\delta_Q(\LcLcub W)=\sum_{0<U<V<1} a^W_{U,V} (\LcLcub U) \w (\LcLcub V)
\]
and 
\[
\delta_Q(\ol{\Lcb_W(1)})=\sum_{0<U<V<1} a^W_{U,V} (\ol{\Lcb_U(1)}) \w (\ol{\Lcb_V(1)}).
\]
Using the induction hypothesis, one has in $\HH^0(Q_{\A^1})$
\[
\delta_Q(\LcLcub W)=\sum_{0<U<V<1} a^W_{U,V} (\ol{\Lcb_U(1)}) \w (\ol{\Lcb_V(1)})
\]
and thus 
\[
\delta_Q\left(\LcLcub W - \ol{\Lcb_W(1)} \right)=0 \qquad \mx{in} \quad
\HH^0(Q_{\A^1}). 
\]
Let $C_W$ be the class of $\LcLcub W - \ol{\Lcb_W(1)}$ in $\HH^0(Q_{\A^1})$ and
  $sC_W$ its image in 
\[
\Omega_{coL}(\HH^0(Q_{\A^1}))=S^{gr}(s\Q \otimes  \HH^0(Q_{\A^1}) ).
\]

As $\delta_Q(C_W)=0$, $d_{\Omega, coL}(sC_W)=0$ and $sC_W$ gives a class in 

\[
\HH^1(\Omega_{coL}(\HH^0(Q_{A^1})))\simeq \HH^1(\Nge{\A^1}{\bullet});
\]
where the above isomorphism is given by Bloch and Kriz in \cite[Corollary
2.31]{BKMTM} after a choice of a $1$-minimal model in the sens of
Sullivan. Using the comparison between $\HH^1(\cNg{\A^1}{\bullet})$ and the
higher Chow groups, this class can be represented 
by $p_1^*(C)$ in $\Nge{\A^1}{1}$ with $C$ a cycle in $\Nge{\Q}{1}$.

The cycle  $p_1^*(C)$ satisfies $\da[\A^1](p_1^*(C))=0$ and $[p_1^*(C)]$ gives a
degree $0$ bar element $C^B$ in $B_{\A^1}$ whose  bar differential and reduced coproduct
are equal to $0$.

From this, one gets a class $\tilde{C}_W=C_W-C^B$ in $\HH^0(Q_{\A^1})$. Its
image $s\tilde C_W$ in $\Omega_{coL}(\HH^0(Q_{A^1}))$ also gives a class in
\[
\HH^1(\Omega_{coL}(\HH^0(Q_{A^1})))
\]
which is $0$ by construction.

As, on the degree $0$ part of $\Omega_{coL}(\HH^0(Q_{\A^1}))$, the differential
$d_{\Omega, coL}$ is 
zero, one obtains that  $s\tilde C_W=0$ in $s\Q \otimes \HH^0(Q_{A^1})$ and thus
$\tilde C_W$ is zero in $\HH^0(Q_{A^1})=Q_{\HH^0(B_{\A^1})}$. The above
discussion shows that:
\[
0=\tilde C_W=C_W-C^B=C_W-[p_1^*(C)]
\]

So far one has obtained that in $B_{\A^1}$: 
\begin{equation}\label{eq:LcLcubW-C}
\LcLcub W - \ol{\Lcb_W(1)}-[p_1^*(C)]=d_B(b) \qquad \mx{modulo }\sha \mx{ products}
\end{equation}
with $b$ in the degree $-1$ part of $B_{\A^1}=B(\Nge{\A^1}{\bullet})$.

Because taking the fiber at $1$ commutes with products and differential,
one gets modulo shuffles
\[
i_1^*(\LcLcub W)- i_1^*(\ol{\Lcb_{W}(1)})-[C]=d_B(i_1^*(b)).
\]

Lemma \ref{lemrestA1=0} insures that $i_1^*(\LcLcub W)- i_1^*(\ol{\Lcb_{W}(1)})=0$.
 Thus
one has
\[
-[C]=d_B(i_1^*(b)) + \mx{ shuffle products}
\] 
 which shows that $[p^*(C)]$ is zero in $\HH^0(B_{\A^1})$
modulo shuffles. Hence Equation \eqref{eq:LcLcubW-C} can
be written has 
\[
\LcLcub W - \ol{\Lcb_W(1)}=0 \qquad \mx{in} \quad Q_{\HH^0(B_{\A^1})}=\HH^0(Q_{\A^1})
\]

Finally, taking the restriction to $\ps$,  one has $\Lcb_W-\Lcub_W=j^*(\LcLcub
W)$.

\end{proof}
The main consequence of Equation \eqref{eq:LcLcuBX} in the previous theorem is
that in $Q_{\HH^0(B_X)}$ one can replace the bar avatar of the geometric
differential system \eqref{ED-Lc} by a bar avatar of the differential system
\eqref{ED-T} coming from Ihara action by special derivations. 
\begin{coro} In $Q_{\HH^0(B_X)}$, the set of indecomposable elements of
  $\HH^0(B(\Nge{X}{\bullet}))$, the following holds for any (non-empty) Lyndon
  word $W$: 
\begin{equation} \tag{ED-$Q_X$}\label{ED-QX} 
 \delta_Q(\Lcb_{W})=\sum_{U<V}\alpha_{U,V}^W\Lcb_{U}\w \Lcb_{V} 
\sum_{U,V}\beta_{U,V}^W\Lcb_{U}\w\Lcb_{V}(1).
\end{equation}
\end{coro}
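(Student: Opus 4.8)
The plan is to obtain \eqref{ED-QX} directly from the cobracket already computed in Theorem \ref{bareltX}, by switching the presentation from the pair $(\Lcb_U,\Lcub_U)$ to the pair $(\Lcb_U,\Lcb_U(1))$. Theorem \ref{bareltX} gives, in $\HH^0(Q_X)$,
\[
\delta_Q(\Lcb_W)=\sum_{U<V} a_{U,V}^W\,\Lcb_U\w \Lcb_V + \sum_{U,V}b_{U,V}^W\,\Lcub_U\w\Lcb_V,
\]
and Theorem \ref{LcLcuBA1} supplies the one relation that makes the two presentations interchangeable, namely $\Lcub_U=\Lcb_U-\Lcb_U(1)$ in $\HH^0(Q_X)$. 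The point is that this is exactly the change of basis $\Tcu U=\Tc U-\T U(1)$ that turns \eqref{ED-T} into \eqref{ED-Tc} inside $\Tcl[1;x]$; so the corollary is nothing but the inverse of the passage carried out in Lemma \ref{lem:aapbbpa}, now transported to the bar elements along the morphism $\psi_Q$ of Theorem \ref{bareltX}.

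Concretely, I would first substitute $\Lcub_U=\Lcb_U-\Lcb_U(1)$ into the formula above; this is legitimate because $\delta_Q$ descends to $\HH^0(Q_X)$ and the relation of Theorem \ref{LcLcuBA1} is an equality of cohomology classes. This produces three sums,
\[
\delta_Q(\Lcb_W)=\sum_{U<V} a_{U,V}^W\,\Lcb_U\w \Lcb_V + \sum_{U,V}b_{U,V}^W\,\Lcb_U\w\Lcb_V - \sum_{U,V}b_{U,V}^W\,\Lcb_U(1)\w\Lcb_V .
\]
Next I would antisymmetrize the middle sum using $\Lcb_U\w\Lcb_V=-\Lcb_V\w\Lcb_U$ and feed in the coefficient identities of Lemma \ref{lem:aapbbpa}, that is $b_{U,V}^W=\beta_{V,U}^W$ and $a_{U,V}^W=\alpha_{U,V}^W+\beta_{U,V}^W-\beta_{V,U}^W$. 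The middle sum then contributes $\sum_{U<V}(\beta_{V,U}^W-\beta_{U,V}^W)\,\Lcb_U\w\Lcb_V$, which combines with the first sum so that every $\beta$ cancels and only $\sum_{U<V}\alpha_{U,V}^W\,\Lcb_U\w\Lcb_V$ survives. Finally, rewriting the third sum with $\Lcb_U(1)\w\Lcb_V=-\Lcb_V\w\Lcb_U(1)$ and relabelling $U\leftrightarrow V$ converts it into $\sum_{U,V}\beta_{U,V}^W\,\Lcb_U\w\Lcb_V(1)$, which is precisely the second term of \eqref{ED-QX}.

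There is no serious obstacle: all the geometric and homotopical content has already been spent in establishing Theorem \ref{LcLcuBA1}, and what remains is bookkeeping. The points I would treat with care are three. First, the substitution must be made in cohomology, where the relation of Theorem \ref{LcLcuBA1} holds and where $\delta_Q$ is well defined as a chain map. Second, the behaviour of the diagonal $U=V$: in the same-type terms $\Lcb_U\w\Lcb_U=0$ (these are bar-degree-$0$ elements, so the graded wedge is the ordinary antisymmetric one), so those contributions drop automatically, whereas in the mixed terms $\Lcb_U\w\Lcb_U(1)$ is nonzero and is correctly accounted for by $b_{U,U}^W=\beta_{U,U}^W$. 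Third, the degenerate words $W\in\{0,1\}$, for which Lemma \ref{rem:coefAL} and Lemma \ref{relV01} force all the relevant coefficients to vanish, so that \eqref{ED-QX} reduces to $\delta_Q(\Lcb_W)=0$, consistently with $\psi(s\Tc 0)=0$ and with $\dc(\T 1(x))=0$.
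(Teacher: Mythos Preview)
Your proposal is correct and follows essentially the same route as the paper: start from the cobracket formula of Theorem \ref{bareltX}, replace $\Lcub_U$ by $\Lcb_U-\Lcb_U(1)$ via Theorem \ref{LcLcuBA1}, expand, and invoke the coefficient identities of Lemma \ref{lem:aapbbpa}. Your write-up is in fact more detailed than the paper's own proof, which leaves the antisymmetrization and the bookkeeping of the $a$'s and $b$'s in terms of $\alpha$'s and $\beta$'s to the reader; your explicit treatment of the diagonal terms and of the degenerate words $W\in\{0,1\}$ is a welcome addition.
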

\begin{proof}
Let $W$ be a Lyndon word. The statement holds when $W$ has length equal 
$1$ and one can assume that $W$ has length greater or equal to $2$.
One begins with the formula giving $\delta_Q(\Lcb_W)$ from Theorem
\ref{bareltX}:
\[
\delta_Q(\Lcb_W)=\sum_{U<V} a_{U,V}^W\Lcb_U \w \Lcb_V 
\sum_{U,V}b_{U,V}^W\Lcb_{U}
\w\Lcub_V.
\]
Then using the relations given by Equation \eqref{eq:LcLcuBX}, one has 
\begin{equation*}
\delta_Q(\Lcb_W)=\sum_{U<V} a_{U,V}^W\Lcb_U\w \Lcb_V  
  \sum_{U,V}b_{U,V}^W\Lcb_{U}\w \left(\Lcb_V-\Lcb_V(1)\right).
\end{equation*}
Expanding terms as $\Lcb_{U}\left(\Lcb_V-\Lcb_V(1)\right) $ and we conclude the
proof using the
expression of coefficients $a$'s and $b$' in terms of
$\alpha$'s and $\beta$'s given at Lemma \ref{lem:aapbbpa}.
\end{proof}
\subsection{A Basis for the geometric Lie coalgebra}\label{subsec:relbasis}
This section shows that the image of the family of bar elements $\Lcb_W$ in
Deligne-Goncharov motivic fundamental Lie coalgebra is a basis of this coLie
coalgebra. Hence the family $\Lcb_W$ induced a basis of the tannakian coLie
coalgebra of mixed Tate motives over $\ps$ relative to the one for mixed Tate
motives over $\Q$.

We recall that for $X=\ps$, M. Levine in 
\cite{LEVTMFG}[Theorem 5.3.2 and beginning of the section 6.6]
shows, one can identify the Tannakian group associated with $\MTM(X)$ with
the spectrum of $\HH^0(B_X)$:
\[
G_{\MTM(X)}\simeq \Sp(\HH^0(B_X)).
\]

Then, he uses a relative bar-construction in order to relate $G_{MTM(X)}$ to the
motivic fundamental group of $X$ of Goncharov
and Deligne, $\pi_1^{mot}(X,x)$ (see \cite{GFPLDeli} and \cite{DG}).

\begin{thm}[{\cite{LEVTMFG}[Corollary 6.6.2]}]\label{thm:pi1exactseq} Let $x$ be a
  $\Q$-point of 
  $X=\ps$. Then there is a split exact sequence:

 \[
 \begin{tikzpicture}
\matrix (m) [matrix of math nodes,
 row sep=0.5em, column sep=2.3em, 
 text height=1.3ex, text depth=0.25ex] 
 {1 & {\pi_{1}^{mot}(X,x)} & {\Sp(\HH^0(B(\mc N_X)))} &{ \Sp(\HH^0(B(\mc N_{\Q})))} & 1
 \\};
 \path[->,font=\scriptsize]
 (m-1-1) edge node[auto] {} (m-1-2)
(m-1-2) edge node[auto] {} (m-1-3)
(m-1-3) edge node[auto] {$p^*$} (m-1-4)
(m-1-4) edge node[auto] {} (m-1-5);
\path[->,font=\scriptsize, bend left]
(m-1-4) edge node[auto] {$x^*$} (m-1-3);
 \end{tikzpicture}
 \]  
where $p$ is the structural morphism $p: \ps \lra \Sp(\Q)$.
\end{thm}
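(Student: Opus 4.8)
The plan is to obtain the sequence by applying the functor $\Sp \circ \HH^0 \circ B$ to the cdga maps induced by the geometric morphisms $p$ and $x$, and then to identify the kernel of $p^*$ with the Deligne--Goncharov motivic fundamental group by means of a relative bar construction.

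First I would record the functoriality. Write $H_X = \HH^0(B(\mc N_X))$, $H_\Q = \HH^0(B(\mc N_\Q))$, and set $G_X = \Sp(H_X)$, $G_\Q = \Sp(H_\Q)$. Because the quasi-finite cycle complex of Definition \ref{def:qfcycle} is functorial for pullbacks --- precisely the advantage of Levine's complex over the original Bloch--Kriz one --- the structural map $p \colon X \lra \Sp(\Q)$ and the point $x \colon \Sp(\Q) \lra X$ induce maps of cdga
\[
p^* \colon \mc N_\Q \lra \mc N_X, \qquad x^* \colon \mc N_X \lra \mc N_\Q,
\]
with $x^* \circ p^* = (p \circ x)^* = \id$ since $p \circ x = \id_{\Sp(\Q)}$. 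Applying the bar construction and $\HH^0$ gives Hopf-algebra maps between $H_\Q$ and $H_X$ whose composite is the identity; applying $\Sp$ produces the homomorphism $p^* \colon G_X \lra G_\Q$ together with its section $x^*$.

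Next I would deduce exactness at the two outer terms. The retraction $x^*$ exhibits $p^* \colon H_\Q \lra H_X$ as a split injection of Hopf algebras. Over a field, a homomorphism of affine group schemes is faithfully flat exactly when the corresponding map of Hopf algebras is injective; hence $p^* \colon G_X \lra G_\Q$ is faithfully flat, giving surjectivity and exactness at $G_\Q$. Setting $\pi_1^{mot}(X,x) := \ker(p^*)$ then makes the sequence exact at the left and in the middle, and the section $x^*$ splits it, realising $G_X$ as the semidirect product $\pi_1^{mot}(X,x) \rtimes G_\Q$.

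The substantive point, and the main obstacle, is to identify this kernel with the Deligne--Goncharov motivic fundamental group. Abstractly $\ker(p^*) = \Sp\!\left(H_X \otimes_{H_\Q} \Q\right)$, with $\Q$ regarded as an $H_\Q$-module through the counit. I would compute this quotient Hopf algebra as the $\HH^0$ of the bar construction of the fibre cdga $\mc N_X \otimes_{\mc N_\Q} \Q$, where $\mc N_X$ is viewed as an $\mc N_\Q$-algebra via $p^*$ and the base change is taken along the augmentation $\mc N_\Q \lra \Q$. Justifying that the bar construction commutes with this base change at the level of $\HH^0$ is exactly where the Beilinson--Soul\'e vanishing property for $\mc N_X$ and $\mc N_\Q$ must be invoked: it forces the higher cohomology controlling the Eilenberg--Moore / K\"unneth spectral sequence to vanish, so that no correction terms obstruct the identification $H_X \otimes_{H_\Q} \Q \cong \HH^0\!\left(B(\mc N_X \otimes_{\mc N_\Q} \Q)\right)$. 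Finally I would match this relative Hopf algebra with the coordinate ring of $\pi_1^{mot}(X,x)$ obtained tannakianly from $\MTM(X)$ with fibre functor $x^*$; this uses the comparison of Spitzweck (\cite{SpitzweckSCVTM}, as presented in \cite{KTMMLevine}) between Levine's cycle-theoretic category and the motivic category of Voevodsky and of Deligne--Goncharov, under which the relative tannakian group becomes the pro-unipotent motivic fundamental group of $\ps$. This comparison, together with the vanishing-controlled base-change computation, carries all the real weight; the remaining assertions are formal properties of split Hopf-algebra extensions.
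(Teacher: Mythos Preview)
The paper does not give its own proof of this theorem: it is quoted from Levine \cite{LEVTMFG}, and the surrounding text only indicates the method, namely that Levine ``uses a relative bar-construction in order to relate $G_{MTM(X)}$ to the motivic fundamental group of $X$ of Goncharov and Deligne.'' Your sketch follows exactly this route --- functoriality of $\Nge{Y}{\bullet}$, the split surjection from $p\circ x=\id$, identification of the kernel via a relative bar construction controlled by Beilinson--Soul\'e vanishing, and the Spitzweck comparison --- so it matches the approach the paper attributes to Levine.

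One small caution: the step where you pass from $H_X\otimes_{H_\Q}\Q$ to $\HH^0\!\left(B(\mc N_X\otimes_{\mc N_\Q}\Q)\right)$ is really a statement about the two-sided (relative) bar construction $B(\Q,\mc N_\Q,\mc N_X)$ rather than the bar of a naive cdga fibre; you implicitly acknowledge this by invoking the Eilenberg--Moore spectral sequence, but it is worth saying explicitly that the object to compute is the relative bar construction, since that is the device Levine actually uses.
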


 Theorem \ref{thm:pi1exactseq} can be reformulate in
terms of Lie coalgebras, looking at indecomposable elements of the respective
Hopf algebras.
\begin{prop}
There is a split exact sequence of Lie coalgebras:
 \[
 \begin{tikzpicture}
\matrix (m) [matrix of math nodes,
 row sep=0.6em, column sep=3.5em, 
 text height=1.5ex, text depth=0.25ex] 
 {0 &Q_{\HH^0(B_{\Q})} & Q_{\HH^0(B_{X})} &Q_{geom} & 0
 \\};
 \path[->,font=\scriptsize]
 (m-1-1) edge node[auto] {} (m-1-2)
(m-1-2) edge node[auto] {$\tilde p$} (m-1-3)
(m-1-3) edge node[auto] {$\phi$} (m-1-4)
(m-1-4) edge node[auto] {} (m-1-5);
\path[->,font=\scriptsize, bend left]
(m-1-3.south west) edge node[auto] {$\tilde x$} (m-1-2.south east);
 \end{tikzpicture}
 \]  
where $Q_{geom}$ is the set of indecomposable elements of
$\mc{O}(\pi_{1}^{mot}{X,x})$ and is isomorphic as Lie coalgebra to the graded
dual of  the 
 Lie algebra associated to $\pi_1^{mot}(X,x)$. Hence, $Q_{geom}$ is isomorphic
as Lie coalgebra  to the graded dual of the free Lie algebra on two generators
$\Lie(X_0,X_1)$. 
\end{prop}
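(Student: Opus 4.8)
The plan is to obtain the proposition by applying the functor of indecomposable elements to the split exact sequence of Theorem \ref{thm:pi1exactseq}, using that on coordinate Hopf algebras this sequence becomes a split sequence of connected graded Hopf algebras.

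First I would pass from group schemes to Hopf algebras. Under the anti-equivalence $\Sp(-)\leftrightarrow \mc{O}(-)$, the surjection $p^*\colon \Sp(\HH^0(B_X))\to \Sp(\HH^0(B_{\Q}))$ corresponds to the injective Hopf-algebra morphism $\HH^0(B_{\Q})\to\HH^0(B_X)$ induced by $p^*$ at the level of the cycle cdgas and their bar constructions, while its kernel $\pi_1^{mot}(X,x)$ has coordinate Hopf algebra $\mc{O}(\pi_1^{mot}(X,x))$, a quotient of $\HH^0(B_X)$. The section $x^*$ exhibits $\Sp(\HH^0(B_X))$ as the semidirect product $\pi_1^{mot}(X,x)\rtimes \Sp(\HH^0(B_{\Q}))$; since the coordinate ring of a product of affine schemes is the tensor product of the coordinate rings, this gives an isomorphism of graded augmented algebras
\[
\HH^0(B_X)\;\cong\; \mc{O}(\pi_1^{mot}(X,x))\otimes \HH^0(B_{\Q}),
\]
in which only the coproduct is twisted by the action of $\Sp(\HH^0(B_{\Q}))$.

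Next I would take indecomposables. As recalled in Section \ref{subsub:barcdga}, for a connected graded commutative Hopf algebra $H$ the space $Q_H=\bar H/\bar H^2$ carries a cobracket $\delta_Q=1/2(\bar\Delta-\tau\bar\Delta)$ making it a Lie coalgebra, and every Hopf-algebra morphism induces a morphism of Lie coalgebras on indecomposables; applying $Q(-)$ to the maps induced by $p^*$ and by the quotient $\HH^0(B_X)\to\mc{O}(\pi_1^{mot}(X,x))$ yields the arrows $\tilde p$ and $\phi$ of the statement, and applying it to $x^*$ yields the splitting $\tilde x$. Since the indecomposables of a tensor product of augmented algebras split as a direct sum, the displayed isomorphism gives
\[
Q_{\HH^0(B_X)}\;\cong\; Q_{\mc{O}(\pi_1^{mot}(X,x))}\oplus Q_{\HH^0(B_{\Q})},
\]
with $\phi$ the projection onto the first summand and $\tilde p$ the inclusion of the second. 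This is exactly a split short exact sequence of Lie coalgebras with $Q_{geom}:=Q_{\mc{O}(\pi_1^{mot}(X,x))}$, all maps being Lie-coalgebra morphisms because they come from Hopf-algebra morphisms.

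Finally I would identify $Q_{geom}$. For a prounipotent group of finite type in each weight, the Lie coalgebra of indecomposables of its coordinate Hopf algebra is canonically the graded dual of its Lie algebra (indecomposables are dual to the primitives of the graded dual Hopf algebra, and the primitives are the Lie algebra). By Deligne--Goncharov \cite{DG}, the Lie algebra of $\pi_1^{mot}(\ps,x)$ is the pronilpotent graded completion of the free Lie algebra $\Lie(X_0,X_1)$ on two generators; hence $Q_{geom}$ is isomorphic, as a graded Lie coalgebra, to the graded dual of $\Lie(X_0,X_1)$. The main obstacle in carrying this out rigorously is the bookkeeping with gradings and pro-completions: $\Sp(\HH^0(B_X))$ and $\Sp(\HH^0(B_{\Q}))$ are extensions of the weight torus $\Gm$ by prounipotent groups, so one must restrict $Q(-)$ and graded duality to the category of graded Hopf algebras of finite type in each weight, where both are exact, and keep the $\Gm$-part from interacting with the unipotent kernel; once this framework is fixed, the exactness of $Q(-)$ on the split sequence and the duality identification of $Q_{geom}$ are formal.
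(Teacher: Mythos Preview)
Your proposal is correct and follows exactly the approach the paper indicates: the paper states the proposition as a direct reformulation of Theorem~\ref{thm:pi1exactseq} ``looking at indecomposable elements of the respective Hopf algebras'' and gives no further argument, so your write-up simply spells out the details of that passage (Hopf-algebra side of the split sequence, exactness of $Q(-)$ on the resulting tensor decomposition, and the identification of $Q_{geom}$ via Deligne--Goncharov).
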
 
 Considering the family of bar elements $\Lcb_W$ for all Lyndon words $W$ in
 this short exact sequence of Lie coalgebra,  ones gets
\begin{thm}\label{Lcbbasis}
The family $\phi(\Lcb_W)$ for any Lyndon words $W$ is a basis of the Lie
coalgebra $Q_{geom}$. Hence the family $\Lcb_W$ is a basis of $Q_{\HH^0(B_{X})}$
relatively to $Q_{\HH^0(B_{\Q})}$.
\end{thm}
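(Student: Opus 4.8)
The plan is to prove the statement by induction on the weight $p=|W|$, showing that the classes $\phi(\Lcb_W)$ realise inside $Q_{geom}$ the basis dual to the Lyndon bracket basis $\{[W]\}$ of $\Lie(X_0,X_1)$. Two facts will drive the argument. First, the Proposition above identifies $Q_{geom}$ with the graded dual of $\Lie(X_0,X_1)$; since the Lyndon brackets $[W]$ form a homogeneous basis of $\Lie(X_0,X_1)$ (Reutenauer, \cite{ReuFLA93}), the number of Lyndon words of length $p$ equals the dimension of the weight-$p$ part of $Q_{geom}$. Thus in each weight it suffices to prove that the finitely many elements $\phi(\Lcb_W)$ with $|W|=p$ are linearly independent. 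Second, applying $\phi$ to the cobracket formula \eqref{ED-QX} and using that $\phi$ annihilates the constant bar elements (see below), I obtain in $Q_{geom}$ the purely ``free'' relation
\[
\delta_Q(\phi(\Lcb_W)) = \sum_{U<V} \alpha_{U,V}^W\, \phi(\Lcb_U) \w \phi(\Lcb_V),
\]
which is exactly the cobracket of the dual-Lyndon basis in the graded dual of $\Lie(X_0,X_1)$.

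The crucial input is the vanishing $\phi(\Lcb_W(1))=0$ for every Lyndon word $W$: for $|W|=1$ one has $\Lcb_W(1)=0$ by construction, while for $|W|\geqs 2$ the element $\Lcb_W(1)$ is built from the constant cycle $\ol{\Lc_W(1)}=p_1^*\circ i_1^*(\ol{\Lc_W})$ pulled back from $\Sp(\Q)$, so it lies in the image of $\tilde p$ and is therefore killed by $\phi$ by exactness of the sequence of the Proposition. This is the step I expect to be the main obstacle, since it is where the motivic/geometric structure rather than pure combinatorics is used: one must know that the bar avatar of a fibre-at-$1$ cycle genuinely descends to the base $\Q$ and is annihilated by the projection onto the fundamental-group part. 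Here Theorem \ref{LcLcuBA1} does the heavy lifting, identifying $\Lcb_W-\Lcub_W$ with the constant element $\Lcb_W(1)$, and it is what makes the displayed $\alpha$-only cobracket valid in $Q_{geom}$.

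With these in hand the induction runs cleanly. The base case $p=1$ records that the classes of the two weight-one cycles $\Lco$ and $\Lcz$ span the two-dimensional weight-one part of $Q_{geom}$, dual to $X_0$ and $X_1$. For the inductive step, assume $\{\phi(\Lcb_W):|W|<p\}$ is a basis of $Q_{geom}$ in the lower weights; then the wedges $\phi(\Lcb_U)\w\phi(\Lcb_V)$ with $U<V$ Lyndon and $|U|+|V|=p$ are linearly independent. Suppose $\sum_{|W|=p} c_W\,\phi(\Lcb_W)=0$; applying $\delta_Q$ and using the displayed relation gives $\sum_{W} c_W\,\alpha_{U,V}^W=0$ for every Lyndon pair $U<V$ with $|U|+|V|=p$. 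Now take $(U,V)$ to be the standard factorisation of a prescribed Lyndon word $W_0$: by the very definition $[W_0]=[[U],[V]]$ one has $\alpha_{U,V}^{W}=\delta_{W,W_0}$, so the relation collapses to $c_{W_0}=0$. Hence all $c_W$ vanish, the family is independent, and by the dimension count it is a basis of $Q_{geom}$.

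Finally, the relative statement is immediate. In the split exact sequence of Lie coalgebras
\[
0 \lra Q_{\HH^0(B_{\Q})} \lra Q_{\HH^0(B_{X})} \xrightarrow{\phi} Q_{geom} \lra 0,
\]
the elements $\Lcb_W$ map to the basis $\{\phi(\Lcb_W)\}$ of the quotient, so they are linearly independent modulo $Q_{\HH^0(B_{\Q})}$ and span $Q_{\HH^0(B_{X})}$ modulo $Q_{\HH^0(B_{\Q})}$; that is, they form a basis of $Q_{\HH^0(B_{X})}$ relative to $Q_{\HH^0(B_{\Q})}$.
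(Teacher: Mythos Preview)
Your proof is correct and follows essentially the same route as the paper: both establish $\phi(\Lcb_W(1))=0$ (since these constant elements lie in the image of $\tilde p$), use this to reduce the cobracket formula \eqref{ED-QX} to the $\alpha$-only relation in $Q_{geom}$, and conclude that the $\phi(\Lcb_W)$ realise the basis dual to the Lyndon brackets. Your inductive argument via the standard-factorisation trick simply makes explicit what the paper compresses into the single clause ``because $\delta_{geom}$ is dual to the bracket of $\Lie(X_0,X_1)$''.
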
 
\begin{proof} The above short exact sequence being a sequence of Lie
  coalgebra, one has 
\[
 \begin{tikzpicture}
\matrix (m) [matrix of math nodes,
 row sep=2.5em, column sep=1.9em,
 text height=1.4ex, text depth=0.25ex] 
 {0 &Q_{\HH^0(B_{\Q})} \w Q_{\HH^0(B_{\Q})} & Q_{\HH^0(B_{X})} \w
   Q_{\HH^0(B_{X})} &Q_{geom} \w Q_{geom} & 0
 \\
0 &Q_{\HH^0(B_{\Q})} & Q_{\HH^0(B_{X})} &Q_{geom} & 0
 \\};
 \path[->,font=\scriptsize]
 (m-2-1) edge node[auto] {} (m-2-2)
(m-2-2) edge node[auto] {$\tilde p$} (m-2-3)
(m-2-3) edge node[auto] {$\phi$} (m-2-4)
(m-2-4) edge node[auto] {} (m-2-5)
(m-2-2) edge node [auto] {$\delta_{Q, \Q}$} (m-1-2)
(m-2-3) edge node [auto] {$\delta_{Q, X}$} (m-1-3)
(m-2-4) edge node [auto] {$\delta_{geom}$} (m-1-4)
 (m-1-1) edge node[auto] {} (m-1-2)
(m-1-2) edge node[auto] {$\tilde p \w \tilde p$} (m-1-3)
(m-1-3) edge node[auto] {$\phi\w \phi$} (m-1-4)
(m-1-4) edge node[auto] {} (m-1-5)
;
 \end{tikzpicture}
 \] 
As $\delta_{Q,X}(\Lcb_0)=\delta_{Q,X}(\Lcb_1)=0$, weight reasons show that
$\phi(\Lcb_0)$ and $\phi(\Lcb_1)$ are dual to the  weight $1$ generators  of $\Lie(X_0,X_1)$.

In order to show that the family $\phi(\Lcb_W)$ is a basis of $Q_{geom}$, it is
enough to show that the elements  $\phi(\Lcb_W)$ satisfy:
\[
\delta_{geom}(\phi(\Lcb_W))=\sum_{U<V} \alpha_{U,V}^W \phi(\Lcb_U) \w \phi(\Lcb_V)
\]
because $\delta_{geom}$ is dual to the bracket $[\, , \, ]$ of $\Lie(X_0,X_1)$.

As $\phi$ commutes with the cobracket, it is enough to compute $(\phi \w \phi)
\circ \delta_X(\Lcb_W)$ :
\begin{align*}
\delta_{geom}(\phi(\Lcb_W))=&(\phi \w \phi) \circ \delta_X(\Lcb_W) \\
=&(\phi\w \phi)\left(\sum_{U<V}\alpha_{U,V}^W\Lcb_{U}\w \Lcb_{V} 
\sum_{U,V}\beta_{U,V}^W\Lcb_{U}\w\Lcb_{V}(1) \right) \\[2em]
=&\sum_{U<V}\alpha_{U,V}^W\phi(\Lcb_{U})\w \phi(\Lcb_{V}) 
\sum_{U,V}\beta_{U,V}^W\phi(\Lcb_{U})\w\phi(\Lcb_{V}(1))
\end{align*}

By construction $\phi(\Lcb_{V}(1))$ is zero. Thus one obtains the expected formula for
$\delta_{geom}(\phi(\Lcb_W))$. 
\end{proof}

Note that $\delta_X$ gives the coaction of $Q_{\HH^0(B_{\Q})}$ on $Q_{geom}$
described in \cite{BrownMTMZ} in relation with Goncharov motivic coproduct
$\Delta^{mot}$. In this context, 
Equation \eqref{ED-QX}
\[
 \delta_Q(\Lcb_{W})=\sum_{U<V}\alpha_{U,V}^W\Lcb_{U}\w \Lcb_{V} 
\sum_{U,V}\beta_{U,V}^W\Lcb_{U}\w\Lcb_{V}(1)
\]
is nothing but another expression for Goncharov motivic cobracket $1/2(\Delta^{mot}-\tau
\Delta^{mot})$. This new expression has the advantage that it is stable under
the generating family $\Lcb_W$. 
\bibliographystyle{amsalpha}
\nocite{}
\bibliography{barbase}
\end{document}